\documentclass[12pt]{amsart}
\usepackage{latexsym,amssymb,amsmath}
\usepackage{amsmath,amsfonts}
\usepackage{comment}
\usepackage{enumerate}
\usepackage{todonotes}
\usepackage{cleveref}
\usepackage{epsfig}
\usepackage{graphicx}
\usepackage{verbatim,xcolor}
\newtheorem{theorem}{Theorem}[section]
\newtheorem{lemma}[theorem]{Lemma}

\newtheorem{definition}[theorem]{Definition}
\newtheorem{rmk}[theorem]{Remark}

\newtheorem{hyp}{Hypothesis}

\usepackage[letterpaper, margin=1in]{geometry}

\allowdisplaybreaks

\newcommand{\Hmm}[1]{\leavevmode{\marginpar{\tiny%
$\hbox to 0mm{\hspace*{-0.5mm}$\leftarrow$\hss}%
\vcenter{\vrule depth 0.1mm height 0.1mm width \the\marginparwidth}%
\hbox to 0mm{\hss$\rightarrow$\hspace*{-0.5mm}}$\\\relax\raggedright #1}}}
\newcommand{\nc}{\newcommand}
\nc{\les}{\lesssim}
\nc{\ges}{\gtrsim}
\nc{\nit}{\noindent}
\nc{\nn}{\nonumber}
\nc{\D}{\partial}
\nc{\diff}[2]{\frac{d #1}{d #2}}
\nc{\diffn}[3]{\frac{d^{#3} #1}{d {#2}^{#3}}}
\nc{\pdiff}[2]{\frac{\partial #1}{\partial #2}}
\nc{\pdiffn}[3]{\frac{\partial^{#3} #1}{\partial{#2}^{#3}}}
\nc{\abs}[1] {\lvert #1 \rvert}
\nc{\cAc}{{\cal A}_c}
\nc{\cE}{{\cal E}}
\nc{\mF}{{\mathcal F}}
\nc{\cF}{{\mathcal F}} 
\nc{\cP}{{\cal P}}
\nc{\cV}{{\cal V}}
\nc{\cQ}{{\cal Q}}
\nc{\cGin}{{\cal G}_{\rm in}}
\nc{\cGout}{{\cal G}_{\rm out}}
\nc{\cO}{{\cal O}}
\nc{\Lav}{{\cal L}_{\rm av}}
\nc{\cL}{{\mathcal L}}
\nc{\cH}{{\mathcal H}}
\nc{\cB}{{\cal B}}
\nc{\cZ}{{\cal Z}}
\nc{\cR}{{\cal R}}
\nc{\cT}{{\cal T}}
\nc{\cY}{{\cal Y}}
\nc{\cX}{{\cal X}}
\nc{\cXT}{{{\cal X}(T)}}
\nc{\cBT}{{{\cal B}(T)}}
\nc{\vD}{{\vec \mathcal{D}}}
\nc{\efield}{\mathcal{E}}
\nc{\vE}{{\vec \efield}}
\nc{\vB}{{\vec \mathcal{B}}}
\nc{\vH}{{\vec \mathcal{H}}}
\nc{\ty}{{\widetilde y}}
\nc{\tu}{{\widetilde u}}
\nc{\tV}{{\widetilde V}}
\nc{\Pc}{{\bf P_c}}
\nc{\bx}{{\bf x}}
\nc{\bX}{{\bf X}}
\nc{\by}{{\bf y}}
\nc{\bXYZ}{{\bf XYZ}}
\nc{\bY}{{\bf Y}}
\nc{\bF}{{\bf F}}
\nc{\bS}{{\bf S}}
\nc{\bz}{{\bf z}}
\nc{\br}{{\bf r}}
\nc{\dV}{{\delta V}}
\nc{\dE}{{\delta E}}
\nc{\TT}{{\Theta}}
\nc{\dPsi}{{\delta\Psi}}
\nc{\order}{{\cal O}}
\nc{\Rout}{R_{\rm out}}
\nc{\eplus}{e_+}
\nc{\eminus}{e_-}
\nc{\epm}{e_\pm}
\nc{\eps}{\varepsilon}
\nc{\vnabla}{{\vec\nabla}}
\nc{\G}{\Gamma}
\nc{\w}{\omega}
\nc{\mh}{h}
\nc{\mg}{g}
\nc{\sinc}{{\rm sinc}}
\nc{\vphi}{\varphi}
\nc{\tlambda}{\widetilde\lambda}

\nc{\g}{\gamma}
\nc{\ol}{\overline}

\newcommand\numberthis{\addtocounter{equation}{1}\tag{\theequation}}
  \nc{\F}{\mathcal F}
 
\def\R{\mathbb R}
\def\C{\mathbb C}
\nc{\T}{\mathbb T}
\nc{\Z}{\mathbb Z}
\nc{\N}{\mathbb N}
\nc{\pt}{\partial_t}
\nc{\la}{\langle}
\nc{\ra}{\rangle}
\nc{\infint}{\int_{-\infty}^{\infty}}
\nc{\halfwidth}{6.5cm}
\nc{\uu}{\" u}
\nc{\oo}{\" o}
\nc{\nlayers}{L} \nc{\nsectors}{M}
\nc{\indicator}{\mathbf{1}}
\nc{\Rhole}{R_{\rm hole}}
\nc{\Rring}{R_{\rm ring}}
\nc{\neff}{n_{\rm eff}}
\nc{\Frem}{F_{\rm rem}}
\nc{\DD}{\Delta}
\nc{\cD}{\mathcal D}
\nc{\rar}{\rightarrow}
\nc{\sgn}{{\rm sign}}
\nc{\Dx}{\mathcal{D}}
\nc{\non}{\nonumber}
\nc{\wh}{\widehat}
\nc{\minf}{m_{\infty}}
\nc{\im}{{\rm Im}}
\nc{\hatD}{\widehat{D}(k_1)}
\allowdisplaybreaks
\date{\today}

\title{Dispersive Estimates for Dirac Operators with General Domain Walls: One and Two Dimensions}

\author[]{M. Burak Erdo\u{g}an}
\address{Department of Mathematics, University of Illinois, Urbana, IL 61801, USA}
\email{berdogan@illinois.edu}

\author[]{Joseph Kraisler}
\address{Department of Mathematics, Williams College, Williamstown, MA 01267, USA}
\email{jk34@williams.edu}

\author[]{Amir Sagiv}
\address{Department of Mathematical Sciences, New Jersey Institute of Technology, University Heights, Newark, NJ 07102, USA}
\email{amir.sagiv@njit.edu}

\begin{document}
\begin{abstract}
We establish dispersive decay estimates for two-dimensional Dirac equations with bounded and unbounded domain walls, together with estimates for the analogous one-dimensional problem. These are paradigmatic models of bulk-edge correspondence in topological insulators, yet their dispersive dynamics have not been previously studied. We show that topologically equivalent models may exhibit qualitatively different dynamics. Our new approach allows us to analyze models which are beyond the usual framework of localized perturbations of constant-coefficient Dirac operators.
\end{abstract}

\maketitle


\section{Introduction}

Consider the dynamics of the two-dimensional Dirac equation with a domain wall,
\begin{subequations}\label{eq:intro-2d-dirac}
\begin{equation}
\begin{cases}
i\partial_t \alpha(t,x)=D\alpha(t,x)  \, ,\\
\alpha(0,x)=\alpha_0(x)\in L^2(\R^2;\C^2)  \, ,
\end{cases}
\end{equation}
where $x=(x_1, x_2)\in \R ^2$ and
\begin{equation}\label{eq:2d_Dirac}
D\equiv -i\sigma_1\partial_{x_1}-i\sigma_2\partial_{x_2}+m(x_2)\sigma_3 = \begin{pmatrix}
    m(x_2) & -i\partial_{x_1} -\partial_{x_2} \\
    -i\partial_{x_1} +\partial_{x_2} & -m(x_2)
\end{pmatrix}  \, .
\end{equation}
\end{subequations}
Here $\sigma_j$ are the Pauli matrices (see notations in Sec.\ \ref{sec:notations}), and the varying mass
term $m:\R \to \R$ models either a bounded domain wall, in which  $\lim_{x_2 \to \pm \infty} m(x_2)=\pm m_{\infty}$ with $0<m_{\infty}<\infty$; or an unbounded ($m_{\infty}=\infty$) domain wall.

Equation~\eqref{eq:intro-2d-dirac} is a paradigmatic PDE model for two-dimensional topological
insulators \cite{bal2019continuous, bal2024continuous}. The sign change in the  ``mass'' term $m$ between the lower and upper half-planes gives rise to
``edge modes'': energy transport only along the one-dimensional interface $\{x_2=0\}$. These edge modes are {\em topologically protected}, i.e., the net transport of energy along the edge is quantized and stable under perturbations of the Hamiltonian, all owed to topological considerations; see details in  Sec.\ \ref{sec:TI}.

The study of these edge modes, however, only provides a partial understanding of the dynamics of
\eqref{eq:intro-2d-dirac}. In this work, we study its {\em dispersive dynamics}: how waves
spread and decay under unitary evolution. Our results reveal the dependence of the dispersive dynamics on the domain wall, $m$. {\em In particular, we show that ``topologically equivalent'' models, which have the same net edge-transport, may exhibit qualitatively different dispersive dynamics, which are not accounted for by topological considerations.}

The distinction between bounded and unbounded domain walls is central. In the unbounded
case ($m_{\infty}=\infty$), the evolution decomposes into spectral branches which propagate along the edge
direction, $x_1$, while remaining confined in the transverse direction, $x_2$. While the
topologically protected edge-mode branch does not disperse, the remaining branches disperse at a $t^{-1/2}$ rate along the edge, similarly to waves under the flow of the massive one-dimensional Dirac
equation \cite{erdougan2021one}.

When the domain wall $m$ is bounded ($m_{\infty}<\infty$), the picture is richer. In addition to the edge-mode branches,
there is a genuine bulk component dispersing in both spatial directions. We show that this bulk
component satisfies a two-dimensional $t^{-1}$ global (unweighted) decay rate and, in the regular-threshold case, a faster local (weighted) decay rate up to $t^{-2}$. {\em Thus, different versions of
\eqref{eq:intro-2d-dirac} which have the same {\em edge}
index have very different dispersive behavior.} See Sec.\ \ref{sec:mainRes} for details.

The key challenge in analyzing the two-dimensional bounded-domain-wall \eqref{eq:intro-2d-dirac} is that {\em our Hamiltonian, $D$, is not a relatively compact perturbation of either the massive or
massless free Dirac Hamiltonian.} Therefore, dispersive decay estimates of \eqref{eq:intro-2d-dirac} do not fall directly within the existing framework of
\cite{burak2019limiting, erdougan2021massless, erdougan2017dirac, erdougan2018dispersive, herr2025strichartz}. 
Rather, after Fourier transform in the edge direction, $x_1$, the bounded-wall
operator decomposes into the family of one-dimensional Dirac operators:
\begin{equation}
\widehat{D}(k_1)=k_1\sigma_1-i\sigma_2\partial_{x_2}+m(x_2)\sigma_3 \, ,\qquad k_1\in\R \, .
\label{eq:intro-fiber-dirac}
\end{equation}
This naturally leads to the study of dispersion in one-dimensional Dirac Hamiltonians with a domain wall, which are of independent interest in the analysis of topological insulators and waves in periodic media \cite{drouot2021bulk, drouot2020, FLW-PNAS:14, FLW-MAMS:17, DomainWall2024}. These latter one-dimensional operators are also not localized perturbations of a
one-dimensional free Dirac operator, as in \cite{erdougan2021one}; nevertheless, they can be treated by a scattering
approach based on Dirac Jost solutions and the associated one-dimensional Schr{\"o}dinger
scattering theory \cite{deift1979inverse, goldberg2007dispersive, kraisler2024dispersive}.

\subsection{Motivation: Topological Insulators}\label{sec:TI}

The Dirac equation was first introduced as a model reconciling quantum mechanics and special relativity \cite{Thaller_Dirac}. Dirac operators also feature prominently in the study of waves in periodic media. For elliptic operators with periodic coefficients, such as Schr{\"o}dinger, Helmholtz, and Maxwell operators, and given certain special symmetries, such as those of a honeycomb lattice in graphene, the dispersion surfaces  cross conically at isolated points in quasi-momentum space. Such conical crossings are known as Dirac points, and wave packets which are spectrally localized near a Dirac point are governed, on the relevant time-scale, by an effective massless Dirac equation \cite{ammari2026wave, FW:14, sagiv2022effective}. Adding a constant mass term ($m\equiv {\rm const}$) opens a spectral gap at the Dirac point. {\em Varying} the mass term, in one space dimension, leads to edge-modes: proper eigenmodes of the Hamiltonian which are exponentially localized around the edge \cite{FLW-MAMS:17, FLW-PNAS:14, kraisler2024dispersive, drouot2020, drouot2021bulk}. In two space dimensions, by integrating together the one-dimensional Fourier fibers \eqref{eq:intro-fiber-dirac}, one obtains ``branches'' of edge modes, i.e., parts of the continuous spectrum wholly localized around the edge, but that consist of states which may travel and disperse {\em along} the edge.

Most relevant to this work is the role of the Dirac equation with a domain wall \eqref{eq:intro-2d-dirac} as a paradigmatic example for a two-dimensional PDE model of a {\em topological insulators}. The field of topological insulators was launched by the experimental observation of the integer quantum Hall effect in 1980 \cite{klitzing1980new, prange1990quantum}. Broadly speaking, it concerns the relation between physical observables and topological invariants associated with suitable classes of Hamiltonians. This topological perspective explains both the quantization of these observables and their stability under perturbations. Although we do not explicitly use the theory of topological insulators in our analysis, it provides the conceptual setting for our work. We therefore briefly review the relevant ideas; see the introductions \cite{asboth2016short, bernevig2013topological, moessner2021topological,  wen2017colloquium} for further detail.

In two space dimensions, gapped bulk Hamiltonians may carry topological
indices which are invariant under continuous perturbations that preserve
the bulk gaps. The observable consequences of this topology can be expressed
through the {\em bulk-edge correspondence} (BEC): an interface between
two gapped bulk Hamiltonians carries an edge-conductance index which,
loosely speaking, measures the net signed number of edge modes propagating
in one direction along the interface. The BEC then asserts that, when the
two half-space bulk media have different bulk indices, their difference
determines the edge index
\cite{hatsugai1993chern,schulz2000simultaneous}. Consequently, the
edge conductance is quantized, and the corresponding energy transport is
protected under continuous, potentially large, perturbations of the
Hamiltonian that preserve the bulk gap.

Continuum models (PDEs) have been important throughout the development of topological insulators and the BEC. Continuum PDE models have long played a role in this theory, from early works on magnetic Schr{\"o}dinger operators \cite{halperin1982quantized,frohlich2000extended, kellendonk2004quantization} to more recent studies of Schr{\"o}dinger operators in periodic media and their homogenized counterparts \cite{drouot2019bulk,drouot2021microlocal, shapiro2022tight}.

Among these models, the domain-wall Dirac operator
\eqref{eq:intro-2d-dirac} provides a particularly transparent effective
description of BEC. Operators of this form were studied in high-energy physics and topological
photonics
\cite{callan1985anomalies,fosco1999dirac,raghu2008analogs}.
The rigorous Fredholm-index framework for BEC in these models was more recently developed in
\cite{bal2019continuous,bal2023topological}. The change of sign of the mass, $m$, between the lower and upper half-planes,
produces a nonzero bulk-difference invariant. This bulk invariant equals the interface index
measuring the net asymmetric transport along the domain wall (for simplicity, the line $\{ x_2=0\}$ when $m$ is an odd function). Later works study more quantitative features of edge dynamics such as propagation of wave packets along curved interfaces \cite{bal2023edge}, in the presence of a magnetic field \cite{bal2024magnetic}, and under parametric
forcing \cite{bal2022multiscale}, as well as high-dimensional analogs \cite{bal2023topological}.

The BEC detects only net asymmetric propagation along the domain wall. Indeed, reducing the study of the family of models \eqref{eq:intro-2d-dirac} to integer indices inevitably leads to a loss of some interesting dynamical information. In particular, it leaves open the question of {\em dispersive}, infinite-time dynamics. This is the central contribution of this work. 

While there is a developed literature on dispersive and Strichartz estimates in the Dirac equation \cite{boussaid2006stable, carey2018global, d2005decay, burak2019limiting, erdougan2021massless,  erdougan2018dispersive, erdougan2019dispersive,  green2025massless, herr2025strichartz, kopylova2011weighted, kopylova2013dispersion,  kovavrik2022spectral, kraisler2024dispersive, pelinovsky2012asymptotic}, it treats only localized perturbations (short-range potentials)
of constant-coefficient (free) Dirac Hamiltonians. Domain-wall Hamiltonians do not fall under this framework, since they have different asymptotics on the two half-spaces. This is the main technical novelty of this work, and its consequences are discussed in Sec.\ \ref{sec:mainRes}.

\subsection{Results and Strategy}\label{sec:mainRes}

\begin{figure}[p]
\centering
\includegraphics[
    width=\textwidth,
    height=.78\textheight,
    keepaspectratio
]{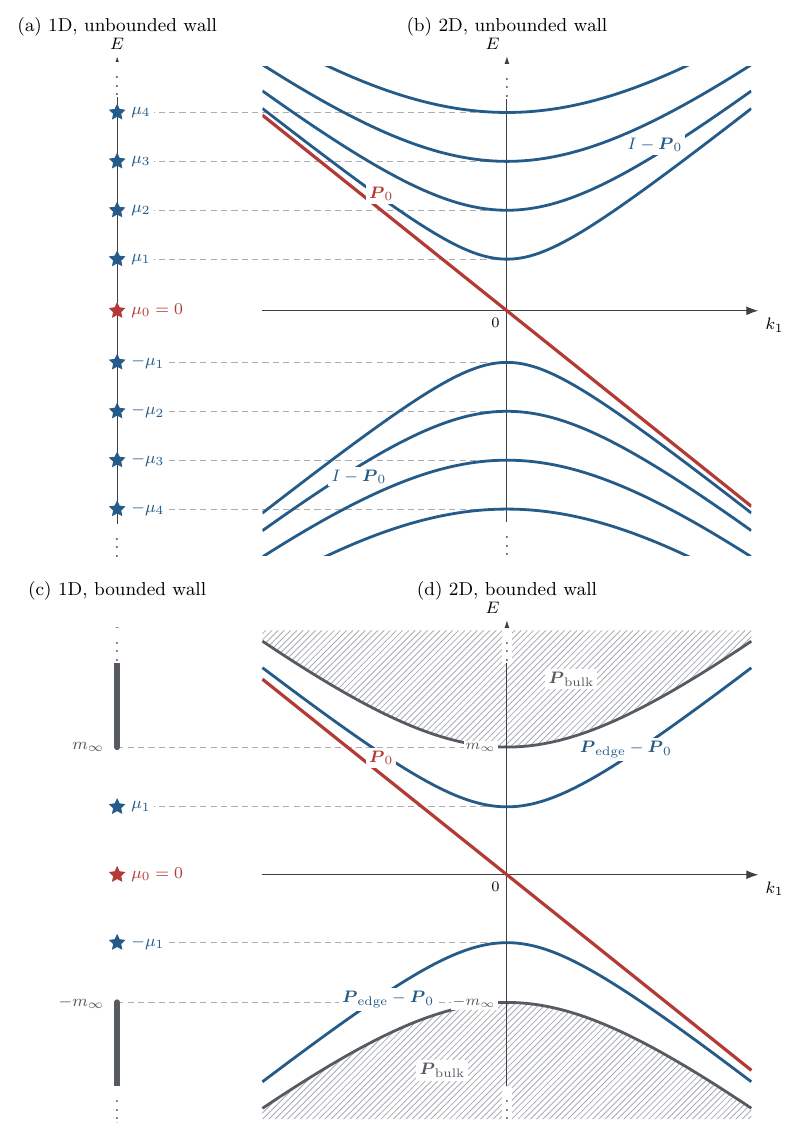}
\caption{Spectra for unbounded (a+b, top row) and
bounded (c+d, bottom row) domain walls. The left panels (a+c) show the
spectrum of the one-dimensional fiber operator $\widehat{D}$: stars denote point spectrum, solid curves continuous spectrum. Right panels (b+d) show the branches
they generate as $k_1$ varies for the two-dimensional $D$: solid curves denote
edge branches, and hatching denotes the two-dimensional bulk spectrum.}
\label{fig:spectrum}
\end{figure}
As discussed above, the two-dimensional Hamiltonian $D$ admits a Fourier transform decomposition in $x_1$ into a direct integral of one-dimensional fiber Hamiltonians, $\widehat D(k_1)$; see
\eqref{eq:intro-fiber-dirac}. Thus, the spectrum of $D$ is assembled from the spectra of
its fibers which, in turn, are determined by a supersymmetric (SUSY) pair of
one-dimensional Schr{\"o}dinger operators in the transverse variable, $x_2$,
schematically\footnote{This is the notation we use for the unbounded case (Sec.\ \ref{sec:unbd}). In the bounded case, we use a shifted version; see details in Sec.\ \ref{sec:2dbd_spec}.}
\begin{equation}\label{eq:Hpm-intro}
H_\pm=-\partial_{x_2}^2+m^2(x_2)\pm m'(x_2) \, .
\end{equation}
Determining the spectrum of $H_{\pm}$, as well as their scattering theory in the bounded
case, is a classical problem in SUSY quantum mechanics, with a known connection to the
Dirac equation; see, e.g., \cite{mielnik2004factorization, Thaller_Dirac}. Going back to the
fiber-Dirac operator, $\widehat{D}(k_1)$, nonzero spectral values of the Schr{\"o}dinger
operators, $\mu\in\sigma(H_\pm)$, generate Dirac energies of the form
$\lambda=\pm\sqrt{k_1^2+\mu}$. Note that, even if $\mu$ is a proper eigenvalue of
$H_{\pm}$, the corresponding branch of $\lambda$ lies in the {\em absolutely continuous} 
$L^2(\R^2)$ spectrum of $D$, because of the non-trivial dependence of $\lambda$ on $k_1$.

Even though spectrum of $D$ is purely absolutely continuous, and that $\sigma_{\rm ac}(D)=\R$, the spectrum of the fibers $\widehat{D}(k_1)$ informs a decomposition of $L^2(\R^2)$ into $D$-invariant subspaces, which feature
qualitatively different dispersive dynamics. These subspaces are illustrated in Fig.\ \ref{fig:spectrum}.\footnote{We defer their formal statements to the relevant sections.} First, the zero mode of $\widehat{D}(0)$ generates the zero
edge-mode branch, $\lambda(k_1)=-k_1$, whose topological properties are discussed above in
Sec.\ \ref{sec:TI}. Denoting the projection onto this branch by $P_0$, we have
that data in ${\rm image}(P_0)$ evolve as a traveling wave along the edge, without dispersing.
It is therefore the dynamics on the orthogonal subspace,
$ {\rm image}(I-P_0)$, in which we are interested in this
paper.

In the {\bf unbounded case}, the operators $H_\pm$ have pure point spectrum, with eigenvalues
$\{\mu_n\}_{n=0}^{\infty}$ tending to infinity.
Thus, the dynamics in ${\rm image}(I-P_0)$ decomposes into countably many
 edge-mode branches, with one-dimensional dispersion relations given by
$\pm\sqrt{\mu_n+k_1^2}$. 
\begin{theorem}[Informal formulation of Theorem~\ref{thm:unbndmdisp}]
Assume that $m$ is unbounded. Then,  
\begin{equation*}
\left\|e^{-itD}\langle D\rangle^{-\gamma}(I-P_0)\right\|_{L^1_{x_1}L^2_{x_2}\to
L^\infty_{x_1}L^2_{x_2}}
\lesssim
\langle t\rangle^{-1/2}
\, ,
\qquad
\gamma> 3/2 
\, .
\end{equation*}
\end{theorem}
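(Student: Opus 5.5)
The plan is to diagonalize $D$ by Fourier transform in $x_1$ and an eigenfunction expansion in $x_2$, which reduces the estimate to a countable family of one-dimensional Klein--Gordon-type oscillatory integrals. In the unbounded case $H_\pm$ have compact resolvent, with eigenvalues $0\le \mu_0<\mu_1<\dots\to\infty$ and $L^2(\R)$-orthonormal eigenfunctions $\phi_n^\pm$. By SUSY we have $H_+=A^*A$, $H_-=AA^*$ with $A=\partial_{x_2}+m$ (up to convention), and $\sigma(H_+)\setminus\{0\}=\sigma(H_-)\setminus\{0\}$. The nonzero eigenvalues $\mu_n>0$ therefore pair up: $\phi_n^-=\mu_n^{-1/2}A\phi_n^+$. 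The zero mode lives in only one of $H_\pm$ and gives $P_0$.

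For each $k_1$ and $n$ with $\mu_n>0$, the fiber $\widehat D(k_1)$ restricted to ${\rm span}\{(\phi_n^+,0),(0,\phi_n^-)\}$ (after the appropriate unitary rotation of spinor components) is the $2\times 2$ matrix $k_1\sigma_1+\sqrt{\mu_n}\sigma_2$-type block. Its eigenvalues are $\pm\lambda_n(k_1)=\pm\sqrt{k_1^2+\mu_n}$, and its normalized eigenvectors $v_n^\pm(k_1)$ are smooth and bounded in $k_1$, with coefficients $\mathcal O(1)$ and derivatives $\mathcal O(\langle k_1\rangle^{-1})$. This gives
\[
e^{-itD}\langle D\rangle^{-\gamma}(I-P_0)f=\sum_{n}\sum_\pm\frac{1}{2\pi}\int_\R e^{ix_1k_1\mp it\lambda_n(k_1)}\langle\lambda_n\rangle^{-\gamma}\,\Pi_n^\pm(k_1)\hat f(k_1,\cdot)\,dk_1,
\]
where $\Pi_n^\pm(k_1)$ is the rank-one projection onto $v_n^\pm(k_1)\otimes\Phi_n$ in $L^2_{x_2}$. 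Writing this as an integral kernel in $x_1-y_1$ and taking the $L^2_{x_2}$ norm, orthogonality in $n$ and the bound $\|\Pi_n^\pm\|\le1$ give
\[
\|\cdot\|_{L^\infty_{x_1}L^2_{x_2}}\le \sup_{x_1}\int\Big(\sum_n \big|K_n(t,x_1-y_1)\big|^2\Big)^{1/2}\|f(y_1,\cdot)\|_{L^2}\,dy_1 .
\]
Here $K_n$ is the scalar (or matrix-valued with bounded smooth symbol) oscillatory integral $\int e^{i(xk\mp t\sqrt{k^2+\mu_n})}\langle\sqrt{k^2+\mu_n}\rangle^{-\gamma}a_n(k)\,dk$. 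Actually $f$ is not decomposed diagonally in $k_1$ for the $L^1_{x_1}$ input, so I will instead bound the operator norm $L^2_{x_2}\to L^2_{x_2}$ of the kernel. By orthogonality this is $\le\sum_n\sup_{x}|K_n(t,x)|$, or better $\sup_n$ plus a tail argument; I will use the crude sum.

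The core step is a uniform-in-$n$ one-dimensional estimate
\[
\sup_x|K_n(t,x)|\lesssim \langle t\rangle^{-1/2}\mu_n^{-(\gamma-1)/2+\epsilon}
\]
(or at least some decay $\mu_n^{-\delta}$ with $\delta$ making $\sum_n$ converge). The phase has $\partial_k^2\lambda_n=\mu_n(k^2+\mu_n)^{-3/2}$, so van der Corput gives $|K_n|\lesssim t^{-1/2}\int |\partial_k(\text{amplitude}\cdot (\partial_k^2\lambda_n)^{-1/2})|\,dk$, following the massive 1D Dirac argument of \cite{erdougan2021one}. The weight $(\partial_k^2\lambda)^{-1/2}=\mu_n^{-1/2}\lambda^{3/2}$ costs $\lambda^{3/2}$, which is why $\gamma>3/2$ is needed. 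The resulting integral is $\lesssim\mu_n^{-1/2}\int\langle\lambda_n\rangle^{3/2-\gamma}\langle k\rangle^{-1}dk$-type, which is finite and decays in $\mu_n$. For $|t|\le1$ the trivial bound $\int\langle\lambda_n\rangle^{-\gamma}dk\lesssim\mu_n^{(1-\gamma)/2}$ suffices.

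The main obstacle is summing in $n$. This needs growth information on $\mu_n$: unboundedness of $m$ gives $\mu_n\to\infty$, but summability of $\sum_n\mu_n^{-\delta}$ requires a rate. I would use a Weyl/WKB lower bound (e.g.\ under a hypothesis like $|m(x)|\gtrsim|x|^{\beta}$, which yields $\mu_n\gtrsim n^{2\beta/(\beta+1)}$). If instead the formal theorem only claims the bound on each branch, or with $\gamma$ adjusted, I would adapt accordingly. Alternatively, I could avoid summation altogether by estimating the full operator kernel in $L^2_{x_2}$ norm, using orthogonality to get $\sup_n$ rather than $\sum_n$. This works because the kernel acts as a multiplier diagonal in $n$ for each fixed $x_1-y_1$, so its $L^2_{x_2}\to L^2_{x_2}$ norm is exactly $\sup_n|K_n(t,x_1-y_1)|$ (after accounting for the $2\times2$ spinor block). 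This gives the bound with a uniform constant and no growth assumption, and I expect this to be the intended route.
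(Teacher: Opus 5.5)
Your final $\sup_n$ route is correct and is essentially the paper's proof. The paper writes the same block-diagonal kernel via Stone's formula and the free resolvent $R_0(\lambda^2-\mu_n)$ in $x_1$ rather than via fiberwise eigenprojections, then uses orthonormality of the $\Gamma_n$ and Minkowski to reduce to $\sup_{n,x_1,y}$ of scalar Klein--Gordon integrals; it gets uniformity in $n$ from the rescaling $\lambda=\langle s\rangle\sqrt{\mu_n}$ and Lemma~\ref{lem:KGdisp}, giving $\mu_n^{\frac12-\frac\gamma2}\langle t\sqrt{\mu_n}\rangle^{-1/2}\lesssim\langle t\rangle^{-1/2}$, so your crude $\sum_n$ detour and the Weyl-law growth hypothesis are unnecessary (the paper uses eigenvalue growth only in a later remark, to weaken the input norm).
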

To prove this bound, we expand the data in the transverse eigenbasis,
$\Gamma_n(x_2)$, and then analyze the one-dimensional $x_1$ dynamics on each branch by expressing the resolvent of $D$ using that of the
one-dimensional free Schr{\"o}dinger resolvent, similarly to the one-dimensional analysis of
\cite{erdougan2021one}. The unusual feature of our analysis is the use of the
 mixed norm, $L^\infty_{x_1}L^2_{x_2}$: it reflects the fact that energy only disperses along the edge ($x_1$), whereas the overall mass distribution across the edge-mode branches (the different $\Gamma_n$, see Fig.\ \ref{fig:spectrum}), is left unchanged by the dynamics.

{\bf In the bounded case,} the Schr{\"o}dinger operators $H_\pm$ (see \eqref{eq:Hpm-intro}) have absolutely continuous spectrum
$[m_\infty^2,\infty)$, and possibly finitely many discrete eigenvalues below
$m_\infty^2$. Thus, for each $k_1\in\R$,
\begin{equation*}
\sigma_{\rm ac}(\widehat D(k_1))
=
(-\infty,-\nu(k_1,0)]\cup[\nu(k_1,0),\infty)
\, ,
\qquad
\nu(k_1,k_2)\equiv\sqrt{k_1^2+k_2^2+m_\infty^2} \, .
\end{equation*}
Informally, the projection onto the bulk component is
$P_{\rm bulk}=\int^{\oplus}P_{\rm ac}(\widehat D(k_1))\,dk_1$.
The fiber point spectra generate the edge projection, $P_{\rm edge}$,
which contains the zero-branch projection, $P_0$. Thus,
\begin{equation*}
I_{L^2(\R^2;\C^2)}=P_{\rm edge}+P_{\rm bulk} \, .
\end{equation*}
The dynamics of $P_0$ and $P_{\rm edge}-P_0$ are essentially as
in the unbounded domain-wall case above.  We thus focus on the bulk component, which disperses in two dimensions, with dispersion relation given by
$\nu(k_1,k_2)$. We show:
\begin{theorem}[Informal formulation of the bulk estimate in Theorem~\ref{thm:2d_main_theorem}]
For $m_{\infty}<\infty$ 
\begin{equation*}
\left\|e^{-itD}\langle D\rangle^{-\gamma}P_{\rm bulk}\right\|_
{L^1(\R^2;\C^2)\to L^\infty(\R^2;\C^2)}
\lesssim
\langle t\rangle^{-1}
\, , \qquad \gamma > 2 \, .
\end{equation*}
\end{theorem}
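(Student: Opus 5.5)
We prove the bulk $L^1\to L^\infty$ bound by writing the bulk evolution as an explicit oscillatory integral in $(k_1,k_2)$ and reducing it to the free massive Dirac dispersive estimate in $\R^2$. The reduction is organized fiberwise.

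\emph{Step 1: fiberwise spectral representation.} Take the partial Fourier transform in $x_1$. The bulk part of the kernel of $e^{-itD}\langle D\rangle^{-\gamma}P_{\rm bulk}$ then becomes
\begin{equation*}
K_t(x,y)=\frac{1}{2\pi}\int_{\R} e^{ik_1(x_1-y_1)}\,\Big[e^{-it\widehat D(k_1)}\langle \widehat D(k_1)\rangle^{-\gamma}P_{\rm ac}(\widehat D(k_1))\Big](x_2,y_2)\,dk_1 .
\end{equation*}
For fixed $k_1$, the operator $\widehat D(k_1)^2=k_1^2+{\rm diag}(H_+,H_-)$ is diagonal, up to the shift used in the bounded case. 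Hence the a.c. part of $\widehat D(k_1)$ can be written through the generalized eigenfunctions (Jost solutions) of $H_\pm$ at energy $m_\infty^2+k_2^2$. The Dirac eigenvector at $\lambda=\pm\nu(k_1,k_2)$ has the form $\big(\psi_+(x_2,k_2),\ (\lambda-\text{stuff})^{-1}(k_1+ \partial)\psi_+\big)$; via SUSY this equals $\psi_-$ up to a factor $(k_1\pm i\sqrt{\mu})$ normalized by $\lambda$.

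This gives the representation
\begin{equation*}
K_t(x,y)=\sum_\pm\iint e^{ik_1(x_1-y_1)}e^{\mp it\nu(k_1,k_2)}\nu^{-\gamma}\,\Phi_\pm(k_1,k_2;x_2)\Phi_\pm(k_1,k_2;y_2)^*\,dk_1dk_2 .
\end{equation*}
Here $\Phi_\pm$ is built from $f_\pm(x_2,k_2)$ and transmission/reflection coefficients of $H_\pm$. These obey the standard bounds $|T|,|R|\le 1$ and $T,R$ smooth in $k_2$ with derivative bounds, using decay of $m^2\pm m'-m_\infty^2$ (with different limits on each side, handled as a step potential).

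\emph{Step 2: reduction to plane waves.} Write each Jost solution as $f(x_2,k_2)=e^{ik_2x_2}(1+\text{error } m(x_2,k_2))$ on the appropriate half-line, and switch to the other Jost solution on the other half-line. Split into the four regions $x_2,y_2\gtrless0$. In each region the kernel becomes a finite sum of integrals
\begin{equation*}
\iint e^{i(k_1 X_1 \pm k_2 X_2)-it\nu(k)}\,a(k;x_2,y_2)\,dk ,
\end{equation*}
with amplitudes $a$ uniformly bounded, together with their $k_2$-derivatives up to order one or two, uniformly in $x_2,y_2$. The amplitudes are products of $T$, $R$, the remainders $m$, and the spinor factors $(k_1\pm ik_2)/\nu$.

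\emph{Step 3: stationary phase.} Apply the two-dimensional stationary phase bound for $e^{-it\nu}$, which is the free massive Dirac estimate, in a form allowing non-smooth amplitude dependence on $k_2$. The main device is to treat the $k_1$ integral by van der Corput with the Hessian of $\nu$, and to handle $k_2$ by integrating by parts once while controlling $\partial_{k_2}a$ in $L^1_{k_2}$. The weight $\nu^{-\gamma}$ with $\gamma>2$ supplies the integrability at high frequency that replaces the usual smoothness. Uniformity in $x_2,y_2$ then gives the $\langle t\rangle^{-1}$ bound on $\sup|K_t|$, which is the $L^1\to L^\infty$ norm.

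\emph{Main obstacle.} The hardest step is the low-energy threshold $k_2\to 0$. There the Jost coefficients may be singular unless the threshold is regular, since $T(k_2)\sim k_2$ in the generic case. The remainders $m(x_2,k_2)$ also grow like $\langle x_2\rangle$ in the $k_2$-derivative, which would destroy uniformity in $x_2$. I would first try to avoid differentiating in $k_2$ near $k_2=0$. Instead I would use the oscillatory $t^{-1/2}$ decay in each variable separately, $k_1$ via $\nu$ convexity and $k_2$ via a Goldberg-type argument as in \cite{goldberg2007dispersive}, for 1D Schrödinger-like amplitudes with bounded variation $T,R\in$ Wiener algebra. This keeps the estimate unweighted at the cost of only reaching the $t^{-1}$ rate, not faster.
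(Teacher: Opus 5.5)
Your Steps 1--2 broadly match the paper's setup: partial Fourier transform in $x_1$, a SUSY/Jost representation of the fiber resolvents, Stone's formula, and an oscillatory integral with phase $\nu(k_1,k_2)$. The gap is in Step 3, which is where the $\langle t\rangle^{-1}$ has to come from.

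Integrating by parts in $k_2$ is not available. The phase $k_1X_1+k_2X_2-t\nu(k)$ has critical points whenever $|X|<|t|$. Moreover, as you note yourself, $k_2$-derivatives of the Jost amplitudes are not bounded uniformly in $x_2,y_2$. On the half-line where you switch Jost solutions, Lemma~\ref{lem:badside} only gives $\|\varphi_\mp(x,\cdot)\|_{\mathcal{F}TV}\lesssim\langle x\rangle$ and $\|\partial_k\varphi_\mp(x,\cdot)\|_{\mathcal{F}TV}\lesssim\langle x\rangle^2$; generically even $f_\mp(x,0)$ grows linearly.

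Your fallback of getting $t^{-1/2}$ ``in each variable separately'' also fails. The phase $\nu=\sqrt{m_\infty^2+k_1^2+k_2^2}$ does not split as $\phi_1(k_1)+\phi_2(k_2)$. A van der Corput bound in $k_1$ at fixed $k_2$ therefore discards the $k_2$-oscillation, and the remaining $k_2$-integral can only be estimated absolutely; one-dimensional rates do not multiply. A Goldberg-type Wiener-algebra argument does not produce any decay in $k_2$ either. It only replaces a rough amplitude by a bounded measure's worth of translates of a free kernel.

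The missing idea, which is the paper's route, is to apply the Wiener algebra to the entire two-dimensional amplitude and let the free two-dimensional Klein--Gordon kernel supply all of the decay.

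For $x_2<y_2$, write the kernel as $\int e^{ik_1(x_1-y_1)}e^{-it\nu}\nu^{-(\gamma-\epsilon)}F\,dk$ with $F=\frac{k_2\,g_-(x_2)g_+(y_2)^{\top}}{W_{\widehat D}\,\nu^{1+\epsilon}}$. Fourier multiplication gives $|K_t|\le\|\mathcal{F}^{-1}(e^{-it\nu}\nu^{-(\gamma-\epsilon)})\|_{L^\infty(\R^2)}\,\|F\|_{A(\R^2)}$. Lemma~\ref{lem:KGdisp} with $d=2$ and $\gamma-\epsilon>2$ then gives $\langle t\rangle^{-1}$, and no derivative ever falls on the amplitude. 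This is also why only $L^1_1$ decay (resp. $L^1_2$ in the non-regular case) is needed.

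What remains is to show $\sup_{x_2<y_2}\|F\|_{A(\R^2)}<\infty$. Using $W_{\widehat D}=\frac{i\nu k_2-k_1m_\infty}{2(ik_2-m_\infty)}W_H(k_2)$, one factors $F=e^{-ik_2(y_2-x_2)}\frac{k_2\varphi_-(x_2,k_2)\varphi_+(y_2,k_2)}{W_H(k_2)}\,h(k_1,k_2)$.
\begin{itemize}
\item The first factor depends only on $k_2$ and lies in $\mathcal{F}TV(\R_{k_2})$ uniformly. This works because the Jacobian factor $k_2$ and $1/W_H$ combine with the Jost function on its uncontrolled half-line into $\psi_\pm=\frac{k_2}{W_H(k_2)}\varphi_\pm$, which Lemma~\ref{lem:badside} bounds uniformly.
\item The explicit factor $h$ must be shown to lie in $A(\R^2)$. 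This requires dealing with the zero of $i\nu k_2-k_1m_\infty$ at $k=0$, which is present for every wall and is not addressed in your proposal. The paper removes it with an algebraic identity that cancels it against the numerator factor $k_1-ik_2$.
\end{itemize}

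A minor point: the potentials $m^2-m_\infty^2\pm m'$ decay at both ends, so there is no step potential to handle.
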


This is the natural decay rate for the two-dimensional constant-mass Dirac equation \cite{erdougan2017dirac, erdougan2018dispersive}. The main
point is not the oscillatory integral itself, which is governed by the phase
$\nu(k_1,k_2)$, but rather the construction of a spectral representation for $P_{\rm bulk}$
whose amplitudes can be controlled uniformly in $(k_1,k_2)$.

This main difficulty already appears in one dimension. Indeed, omitting the subscript in~$x_2$, there is a fixed unitary matrix $U$ such that $U\widehat D(k_1)U^* =\Dx+k_1\sigma_2$, where $\Dx$ is (up to a fixed conjugation by Pauli matrices) the one-dimensional Dirac operator with a domain wall
\begin{equation}\label{eq:1d_dirac}
\Dx=-i\sigma_3\partial_x+m(x)\sigma_1 \, ,
\end{equation}
where $m(x)\to\pm m_\infty$ as $x\to\pm\infty$. Thus, obtaining dispersive decay estimates for $\Dx$ paves the way for the analysis of the two-dimensional problem above. This model is of independent interest in the analysis of topologically protected edge modes in one dimension \cite{drouot2021bulk, drouot2020, FLW-PNAS:14, FLW-MAMS:17, watson2018wavepackets}. While dispersive decay estimates for a family of models containing $\Dx$, when $m(x)={\rm sgn}(x)$ is a step function, were obtained in \cite{kraisler2024dispersive}, the analysis of the general case is stated informally below:
\begin{theorem}[Informal formulation of Theorem~\ref{thm:1d_unweighted}]
Let $P_{\rm c}(\Dx)$ denote the projection onto the absolutely continuous spectrum of $\Dx$. Then, for any  $m$ sufficiently regular,
\begin{equation*}
\left\|e^{-it\Dx}\langle \Dx\rangle^{-\gamma}P_{\rm c}(\Dx)\right\|_
{L^1(\R;\C^2)\to L^\infty(\R;\C^2)}
\lesssim
\langle t\rangle^{-1/2}
\, , \qquad \gamma > 3/2 \, .
\end{equation*}
\end{theorem}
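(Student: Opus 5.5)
The plan is to write $e^{-it\Dx}\langle \Dx\rangle^{-\gamma}P_{\rm c}(\Dx)$ through Stone's formula and to build the resolvent kernel from Dirac Jost solutions. Squaring the operator, $\Dx^2 = -\partial_x^2 + m^2(x) + m'(x)\sigma_2$ (up to the fixed Pauli conjugation). This is diagonal in a suitable basis and equals ${\rm diag}(H_+,H_-)$ with $H_\pm=-\partial_x^2+m^2\pm m'$. Since $m^2-m_\infty^2$ and $m'$ decay (the regularity hypothesis), $H_\pm - m_\infty^2$ are short-range Schr\"odinger operators. Their spectrum consists of $[0,\infty)$ plus finitely many negative eigenvalues. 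For $\lambda$ with $|\lambda|>m_\infty$, set $\xi=\sqrt{\lambda^2-m_\infty^2}$. The Jost solutions $f_\pm(x,\xi)\sim e^{\pm i\xi x}$ of $H_+-m_\infty^2$ give Dirac Jost solutions $F_\pm(x,\lambda)$ by applying $(\Dx+\lambda)$ to $(f_\pm,0)^T$, or equivalently by using the intertwining $A=\partial_x+m$, $A^*A=H_-$, $AA^*=H_+$. The kernel is then
\[
R_\Dx(\lambda\pm i0)(x,y)=\frac{F_+(x,\lambda)F_-(y,\lambda)^T\mathbf 1_{x>y}+F_-(x,\lambda)F_+(y,\lambda)^T\mathbf 1_{x<y}}{W(\lambda)}\, ,
\]
with $W$ the Dirac Wronskian. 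The Wronskian is proportional to the Schr\"odinger Wronskian of $H_+$, hence to $\xi/t(\xi)$, where $t$ is the transmission coefficient. The jump across the cut gives
\[
e^{-it\Dx}\langle\Dx\rangle^{-\gamma}P_{\rm c}=\frac1{2\pi i}\int_{|\lambda|>m_\infty}e^{-it\lambda}\langle\lambda\rangle^{-\gamma}[R^+-R^-](\lambda)\,d\lambda\, .
\]

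Next I change variables to $\xi$, so that $d\lambda=\xi\,d\xi/\lambda$. On the piece $x>y$ I factor $F_\pm(x,\lambda)=e^{\pm i\xi x}M_\pm(x,\xi)$, using the classical Deift--Trubowitz / Goldberg bounds for $m_\pm(x,\xi)=e^{\mp i\xi x}f_\pm$. These give uniform boundedness and $\partial_\xi$ control for $\pm x\ge0$, and the reflection and transmission coefficients $R_\pm(\xi)$, $t(\xi)$ are bounded with $\partial_\xi t, \partial_\xi R_\pm \in L^2$. For $x$ on the "wrong" side I rewrite $f_+$ in terms of $f_-$ and its conjugate via $t$ and $R_-$, as in \cite{goldberg2007dispersive}. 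This expresses the kernel as a finite sum of oscillatory integrals
\[
\int_0^\infty e^{-it\sqrt{\xi^2+m_\infty^2}\pm i\xi(x-y)}\,a(\xi;x,y)\,\frac{\xi\,d\xi}{\lambda}\, .
\]
Each amplitude $a(\cdot;x,y)$ is bounded uniformly in $x,y$ and has $\partial_\xi a\in L^1$ with norm controlled uniformly on the relevant half-lines. The factor $\xi$ from the Wronskian cancels against the $\xi/t$ denominator, so at $\xi=0$ the amplitude is bounded in both the generic case ($t(0)=0$) and the resonant case.

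The decay follows from a van der Corput argument. The phase $\phi(\xi)=\sqrt{\xi^2+m_\infty^2}\mp\xi\frac{x-y}{t}$ satisfies $\phi''=m_\infty^2(\xi^2+m_\infty^2)^{-3/2}$. This is $\gtrsim 1$ for $\xi\lesssim1$ and $\sim\langle\xi\rangle^{-3}$ for large $\xi$. I split the $\xi$ integral dyadically: $\xi\le1$, and $\xi\sim 2^j$ for $j\ge 0$. On each dyadic piece, van der Corput with amplitude gives a bound of
\[
|t|^{-1/2}\,2^{3j/2}\bigl(\|a\|_{L^\infty}+\|\partial_\xi a\|_{L^1}\bigr)\,2^{-j\gamma}\, .
\]
This is summable exactly when $\gamma>3/2$, matching the statement. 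For $|t|\lesssim1$ the trivial bound $\int\langle\lambda\rangle^{-\gamma}d\lambda<\infty$ suffices, since $\gamma>1$, and this yields $\langle t\rangle^{-1/2}$.

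The main obstacle is the uniform-in-$(x,y)$ amplitude control for a domain wall rather than a free operator. The Jost solutions at $+\infty$ and $-\infty$ solve the same limiting equation with $m\to\pm m_\infty$, but the Dirac spinor structure flips sign across the wall. So the map from Schr\"odinger Jost solutions to Dirac ones involves $m(x)/(\lambda\pm\xi)$-type coefficients. These must be bounded uniformly, including near the threshold $\xi\to0$ where $\lambda\to\pm m_\infty$, and at high energy where the $\partial_\xi$ bounds must not lose powers of $\xi$. I would first handle this using the SUSY intertwining, which expresses the second spinor component as $A f/(\lambda+m_\infty)$-like quotients, and then verify the $\partial_\xi$ bounds separately in the threshold and high-energy regimes.
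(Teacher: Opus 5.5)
Your outline is correct for sufficiently regular $m$. Up to the oscillatory integral it matches the paper, but the decisive step takes a different route.

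\textbf{Where you and the paper agree.} Both use Stone's formula and Dirac Jost solutions built from the SUSY pair. The intertwining relations $A^*f_\pm=\pm(\minf-i\xi)\widetilde f_\pm$ make the Dirac Jost functions explicit $\xi$-symbols times $f_\pm,\widetilde f_\pm$. So the ``$m(x)/(\lambda\pm\xi)$'' obstacle you anticipate does not actually arise. Both also use the Dirac Wronskian $W_{\Dx}=\frac{\minf\nu}{2(i\xi-\minf)}W_H$, absorb the Jacobian $\xi$ through $\xi/W_H\propto t(\xi)$, and perform the transmission/reflection rewriting on the wrong half-line, which is exactly the content of Lemma~\ref{lem:badside}.

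\textbf{Where you differ.} You apply van der Corput to the perturbed amplitude $a(\cdot;x,y)$ itself. That requires bounded-variation control of $a$ uniformly in $x,y$, in particular of $t$, $R_\pm$ and $m_\pm(x,\cdot)$ at the threshold $\xi=0$. The Volterra equation provides this only with extra decay. For example, the natural bound $\sup_{x\ge0}|\partial_\xi m_+(x,\xi)|\lesssim\int_0^\infty y^2|V|$ already needs $L^1_2$ in the regular case. In the resonant case, $t=-2i\xi/W_H(\xi)$ is a $0/0$ quotient whose derivative needs more still.

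The paper never differentiates the perturbed amplitude. It shows that $k\,g_-(x,k)g_+(y,k)^{\top}\sigma_1/(W_{\Dx}(k)\nu(k)^{1+\eps})$ lies in the Wiener algebra $A(\R)$ uniformly in $x<y$. The ingredients are the Deift--Trubowitz bounds on $\widehat\varphi_\pm$, Wiener's lemma for $\lambda/W_H$, and Lemma~\ref{lem:badside}. The Fourier multiplication formula then reduces everything to the free kernel $\mathcal F^{-1}(e^{-it\nu}\nu^{-(\gamma-\eps)})$. Your dyadic van der Corput argument appears there, but only with an explicit amplitude (Lemma~\ref{lem:KGdisp}).

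\textbf{What each route buys.} The paper's approach gets the theorem's hypotheses ($L^1_1$ in the regular case, $L^1_2$ in the resonant case). It also transfers verbatim to the two-dimensional fibers, where the amplitude is a $\mF TV(\R_{k_2})$ factor times an explicit $A(\R^2)$ symbol. Your route is more elementary but costs decay.

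\textbf{One correction inside your route.} Knowing $\partial_\xi t,\partial_\xi R_\pm\in L^2$ would not give $\gamma>3/2$. On a block $\xi\sim 2^j$ it only yields $\|\partial_\xi a\|_{L^1}\lesssim 2^{j/2}$, which forces $\gamma>2$. What you need is a block-uniform bound such as $|\partial_\xi a|\lesssim\langle\xi\rangle^{-1}$ for $|\xi|\ge1$. This does follow from the Volterra bound $|\partial_\xi m_\pm(x,\xi)|\lesssim|\xi|^{-1}\|V\|_{L^1_1}$ on the good half-lines, combined with the Wronskian formulas for $t$ and $R_\pm$.
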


The operators $D$ and $\Dx$ are not localized
perturbations of constant-mass Dirac operators,  and therefore the relevant standard theory does not apply.
Nevertheless, the {\em rates} of decay  and smoothing associated with $\Dx$ and $D$ agree with those of the Dirac equation in one \cite{erdougan2021one} and two \cite{erdougan2017dirac} dimensions with constant mass, respectively. Intuitively, this reflects dispersion far from the interface, where the mass is nearly constant, locally.

To isolate the effect of the interface, we turn our attention to {\em weighted}
(local) estimates: these require stronger localization of the initial data and
measure the supremum of its evolution near the interface. We localize only
transversely to the interface: in $x$ for the one-dimensional operator, $\Dx$,
and in $x_2$ for the two-dimensional operator, $D$. There appears to be
no improvement from additionally localizing along the interface, in the $x_1$
direction. The resulting weighted estimates feature improved rates, but hinge
on the regular-threshold assumptions stated precisely in
Theorems~\ref{thm:1dWeighted} and~\ref{thm:2dweighted}.

  \begin{theorem}[Informal formulation of Theorem~\ref{thm:1dWeighted} in one dimension]
Assume that $\Dx$ has no threshold resonance. Then, for any $\gamma>3/2$,
\begin{equation*}
\left\|
\langle x\rangle^{-1}
e^{-it\Dx}\langle\Dx\rangle^{-\gamma}P_c(\Dx)
\langle y\rangle^{-1}
\right\|_{L^1(\R;\C^2)\to L^\infty(\R;\C^2)}
\lesssim
\langle t\rangle^{-3/2}
\, .
\end{equation*}
\end{theorem}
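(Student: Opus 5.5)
We prove the weighted $t^{-3/2}$ bound through a Stone-formula representation of $e^{-it\Dx}P_c(\Dx)$ built from Jost solutions. The first step squares the operator. Since $\Dx^2=\mathrm{diag}(H_+,H_-)$ with $H_\pm=-\partial_x^2+m^2\pm m'$ (up to the fixed conjugation), the continuous spectrum of $\Dx$ is $|\lambda|\ge m_\infty$, and $\lambda^2=k^2+m_\infty^2$. We then construct Dirac Jost solutions $f_\pm(x,\lambda)$ from the Schr\"odinger Jost solutions $e^{\pm ikx}m_\pm(x,k)$ of $H_+$. The second component is obtained via the first-order relation: it is $(\lambda)^{-1}(\pm\partial_x+m)$ applied to the first component, up to the conjugation. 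Because $m^2\pm m'-m_\infty^2$ is short range, the usual Deift--Trubowitz bounds $|\partial_k^j (m_\pm(x,k)-1)|\lesssim \langle x\rangle^{j}$ (for $x$ on the appropriate half-line) are available. The resolvent kernel is then
\[
R(\lambda\pm i0)(x,y)=\frac{f_+(x)f_-(y)^T\mathbf 1_{x>y}+f_-(x)f_+(y)^T\mathbf 1_{x<y}}{W(\lambda)}\,\cdot(\text{const. matrix}),
\]
and Stone's formula gives
\[
e^{-it\Dx}\langle\Dx\rangle^{-\gamma}P_c(x,y)=\frac1{2\pi i}\int_{|\lambda|\ge m_\infty}e^{-it\lambda}\langle\lambda\rangle^{-\gamma}\,[R^+-R^-](\lambda)(x,y)\,d\lambda .
\]

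Next we change variables to $k$, with $\lambda=\pm\sqrt{k^2+m_\infty^2}$ and $d\lambda=k\,d\lambda/ \lambda\, dk\,/k$, so the integrand becomes $e^{-it\sqrt{k^2+m_\infty^2}}\,\frac{k}{\lambda}\,\langle\lambda\rangle^{-\gamma}\,\mathcal K(k,x,y)$ over $k\in\R$. We split smoothly into low energy $|k|\lesssim 1$ and high energy. At high energy, the phase $\sqrt{k^2+m_\infty^2}$ has nonvanishing derivative with $|\phi''|\sim\langle k\rangle^{-3}$. We integrate by parts once or twice in $k$, each time gaining $t^{-1}$ at the cost of a $k$-derivative falling on $\mathcal K$. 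Each such derivative costs $\langle x\rangle+\langle y\rangle$, which the weights absorb only to first order. We therefore do one integration by parts and then use the van der Corput/stationary-phase bound $t^{-1/2}$ on the remaining piece. Alternatively, we write $e^{ikx}$ factors into the phase and use the standard trick of \cite{erdougan2021one,kraisler2024dispersive}: any $k$-derivative landing on an oscillatory factor $e^{ik|x-y|}$ is paired with the weight. The decay in $\langle\lambda\rangle^{-\gamma}$ with $\gamma>3/2$ makes the derivatives of the transmission/reflection coefficients and of $\lambda/k$ integrable, so high energy already gives $t^{-3/2}$ with the factor $\langle x\rangle\langle y\rangle$.

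The main obstacle is low energy, near the threshold $k=0$ ($\lambda=\pm m_\infty$), where the stationary point of the phase sits. There the $t^{-3/2}$ rate requires the full amplitude $\frac{k}{\lambda}[R^+-R^-]$ to vanish at $k=0$ to first order. This is exactly where the no-threshold-resonance hypothesis enters. In the regular case, the Wronskian $W(k)$ of the Schr\"odinger Jost solutions satisfies $W(0)\neq0$, i.e.\ $W(k)\sim c\,k$ is excluded in the form $W(0)\ne 0$. As a consequence, the transmission coefficient satisfies $T(0)=0$ and $R(0)=-1$. We verify that, after forming the jump $R^+-R^-$ (which replaces $k$ by $-k$ in the Jost data), the kernel $\mathcal K(k,x,y)$ is odd-or-vanishing at $k=0$ with
\[
|\mathcal K(k,x,y)|\lesssim |k|\langle x\rangle\langle y\rangle,\qquad |\partial_k\mathcal K|\lesssim \langle x\rangle\langle y\rangle .
\]
This step is subtle because the domain wall makes the two half-lines asymmetric. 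We must use the exact form of the Dirac Jost solutions in terms of $m_\pm$ on each side, and check that the factor $(\pm\partial_x+m)/\lambda$ does not destroy the cancellation; $\lambda$ is bounded away from zero, so it does not. Given this bound, we apply the standard lemma that $\left|\int e^{-it\sqrt{k^2+m_\infty^2}}\chi(k)\,a(k)\,dk\right|\lesssim t^{-3/2}\big(\|a\|_{L^\infty}+\|a'\|_{L^1}\big)$ when $a(0)=0$. It follows by writing $a(k)=k\,b(k)$ and integrating by parts once, using $\partial_k e^{-it\phi}=-it\,\frac{k}{\lambda}\,e^{-it\phi}$, and then applying the $t^{-1/2}$ van der Corput bound to the result. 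Together with the trivial bound for $|t|\le 1$, obtained from boundedness of the kernel thanks to $\gamma>3/2$, this yields the stated $\langle t\rangle^{-3/2}$ estimate.
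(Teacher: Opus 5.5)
You have the paper's skeleton: Jost solutions, Stone's formula, the substitution $\lambda=\nu(k)$, and a single integration by parts via $e^{-it\nu}\frac{k}{\nu}=\frac{i}{t}\partial_k e^{-it\nu}$ to gain $t^{-1}$, followed by a $t^{-1/2}$ bound. The gap is in how you get that last $t^{-1/2}$: with only $\la x\ra^{-1}\la y\ra^{-1}$ weights it does not close.

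\textbf{Van der Corput costs a second derivative.} Lemma~\ref{lem:vdc}, applied after the integration by parts, requires $\|\partial_k\psi\|_{L^1}$, where $\psi$ already contains one $k$-derivative of the amplitude. So you pay for two $k$-derivatives of $e^{ik(y-x)}\varphi_-(x,k)\varphi_+(y,k)\frac{ik-\minf}{W_H(k)}$.

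\textbf{Why the weights do not absorb it.} Moving $e^{ik(y-x)}$ into the phase avoids a $(y-x)^2$ loss. However, in the same-side region $0<x<y$ (and symmetrically $x<y<0$), $\varphi_-(x,\cdot)$ is evaluated on its bad side. There, by \eqref{eq:fpm_relation}, it contains $\frac{i}{2}\widetilde W_H(k)\varphi_+(x,k)\frac{e^{2ikx}-1}{k}$ with $\widetilde W_H(0)=-W_H(0)\neq 0$.
\begin{itemize}
\item This oscillation cannot be moved into the phase. The two pieces $\frac{\widetilde W_H}{k}e^{2ikx}\varphi_+(x,k)$ and $\frac{W_H}{k}\varphi_+(x,-k)$ are each singular at $k=0$; only their sum is regular.
\item As $x\to\infty$, $\|\partial_k^{j}\frac{e^{2ikx}-1}{k}\|_{L^1(|k|\le 1)}\approx x^{j}\log x$ for $j=1,2$.
\item Consequently the $L^1$ norm van der Corput asks for is of size $x(x+y)\log x\gtrsim xy\log x$, not $O(\la x\ra\la y\ra)$.
\end{itemize}

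\textbf{The ``standard lemma'' is false as stated.} Take $\chi$ even with $\chi(0)=1$, and $a(k)=|k|$ on ${\rm supp}\,\chi$. Then $a(0)=0$ and $a,a'$ are bounded, yet $\int e^{-it\sqrt{k^2+\minf^2}}\chi(k)|k|\,dk=\frac{2\minf}{it}e^{-it\minf}+O(t^{-2})$, which is only $t^{-1}$. Your own sketch of its proof actually uses $\|\partial_k^2(\lambda a/k)\|_{L^1}$, i.e.\ more derivatives than the lemma claims.

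\textbf{Where regularity really enters.} In the full-line form (after $k\to-k$), the amplitude $g_-(x,k)g_+(y,k)^{\top}\sigma_1/W_{\Dx}(k)$ does \emph{not} vanish at $k=0$. Regularity is used so that $\frac{ik-\minf}{W_H(k)}$ and its $k$-derivative lie in $A_1(\R)$ (Lemma~\ref{lem:wronratio}). That guarantees the Jacobian factor $k$ is not cancelled by $1/W_{\Dx}$ and can be spent on the integration by parts.

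\textbf{The missing idea} is the paper's Fourier-multiplication step in place of van der Corput. After the single integration by parts, pair $\mF^{-1}\big(e^{-it\nu}\nu^{-\frac32-}\big)$, which is $O(\la t\ra^{-1/2})$ in $L^\infty$ by Lemma~\ref{lem:KGdisp}, with the Fourier transform of $\nu^{\frac32+}\partial_k\big(\frac{g_-(x,k)g_+(y,k)^{\top}\sigma_1}{W_{\Dx}(k)\nu^{\gamma}}\big)$ measured in $L^1$.
\begin{itemize}
\item This needs only the one derivative already taken, measured in the Wiener algebra.
\item In that norm $e^{ik(y-x)}$ is free. The factor $\frac{e^{2ikx}-1}{k}$, whose Fourier transform is a multiple of $\mathbf 1_{[0,2x]}$, costs $\la x\ra$, and its derivative costs $\la x\ra^2$.
\item This is exactly the bookkeeping \eqref{Wcl1}--\eqref{Wcl4}, which closes with $\la x\ra\la y\ra$ under $m^2-\minf^2,\ m'\in L^1_3$.
\end{itemize}
In effect, the step superposes the phases $e^{iks}$, $s\in[0,2x]$, each controlled by the Klein--Gordon bound, instead of paying for their total variation as van der Corput does.
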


\begin{theorem}[Informal formulation of Theorem~\ref{thm:2dweighted} in two dimensions]
Assume that $D$ is regular. Then, for any $\gamma>3$,
\begin{equation*}
\left\|
\langle x_2\rangle^{-1}
e^{-itD}\langle D\rangle^{-\gamma}P_{\rm bulk}
\langle y_2\rangle^{-1}
\right\|_{L^1(\R^2;\C^2)\to L^\infty(\R^2;\C^2)}
\lesssim
\langle t\rangle^{-2}
\, .
\end{equation*}
\end{theorem}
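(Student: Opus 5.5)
We reduce the estimate to the one-dimensional weighted theory for the fibers and then integrate in $k_1$. After Fourier transform in $x_1$ and conjugation by $U$, the bulk evolution is
\begin{equation*}
e^{-itD}\langle D\rangle^{-\gamma}P_{\rm bulk}
=\int_{\R} e^{ik_1(x_1-y_1)}\, e^{-it(\Dx+k_1\sigma_2)}\langle \Dx+k_1\sigma_2\rangle^{-\gamma}P_{\rm ac}(\Dx+k_1\sigma_2)\,\frac{dk_1}{2\pi}\, .
\end{equation*}
Since $(\Dx+k_1\sigma_2)^2=-\partial_x^2+m^2+k_1^2\pm m'$ is diagonal, each fiber has continuous spectrum parametrized by $k_2$, with energy $\lambda=\pm\nu(k_1,k_2)$. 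The generalized eigenfunctions are built from the Jost solutions $f_\pm(x,k_2)$ of the SUSY Schr\"odinger pair $H_\pm$ (shifted by $k_1^2$), which are \emph{independent of $k_1$}. The factor $k_1$ enters only through the algebraic map reconstructing the Dirac spinor, with coefficients like $(\lambda\mp m_\infty)/k$ and $k_1/(\lambda+\dots)$. The kernel is therefore an oscillatory integral
\begin{equation*}
K(t;x,y)=\iint e^{-it\nu(k_1,k_2)+ik_1(x_1-y_1)}\,a(k_1,k_2;x_2,y_2)\,\langle\nu\rangle^{-\gamma}\,dk_1\,dk_2 .
\end{equation*}
Its amplitude $a$ is a product of transmission and reflection coefficients $T(k_2),R(k_2)$, Jost factors $m_\pm(x,k_2)=e^{\mp ik_2x}f_\pm$, and the phases $e^{\pm ik_2 x_2}$, $e^{\pm ik_2y_2}$.

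The key step uses the regularity assumption. It gives $T(0)\neq0$, $R(0)=-1$, and the standard fact that $\frac{d}{dk_2}$ of the amplitude costs at most $\langle x_2\rangle+\langle y_2\rangle$, controlled by the weights. The amplitude also vanishes linearly at $k_2=0$ (after symmetrizing $k_2\mapsto -k_2$ and using $R(0)=-1$), exactly as in the proof of Theorem~\ref{thm:1dWeighted}. Concretely, I would write $a=k_2\,b$, where
\begin{equation*}
|\partial_{k_2}^j b|\lesssim \langle x_2\rangle\langle y_2\rangle\,\langle k\rangle^{j'}, \qquad j\le 1,
\end{equation*}
and where $k_1$-derivatives of $b$ are harmless symbols of order $\le 0$ in $\langle k\rangle$. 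Fixing $x_2,y_2$ then leaves a two-dimensional oscillatory integral with phase $\nu(k)$ and an amplitude that vanishes at $k_2=0$.

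For the decay, first integrate by parts in $k_2$ using $\partial_{k_2}\nu=k_2/\nu$. Since $k_2b\,\nu/k_2=\nu b$, this gains $t^{-1}$ with no singularity at $k_2=0$, and leaves $t^{-1}\iint e^{-it\nu+ik_1(x_1-y_1)}\partial_{k_2}(\nu b\langle\nu\rangle^{-\gamma})\,dk$. The remaining integral is an $L^\infty$ bound for a two-dimensional Klein--Gordon-type oscillatory integral, which gives another $t^{-1}$ by stationary phase, uniformly in $x_1-y_1$, provided the amplitude has two or so derivatives with enough $\langle k\rangle$ decay. The requirement $\gamma>3$ pays for one derivative for the integration by parts, the $\langle k\rangle$ growth of the amplitude derivatives, and the $\gamma>2$ already needed in Theorem~\ref{thm:2d_main_theorem}. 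Alternatively, I would apply the 2D free Klein--Gordon-type bound of \cite{erdougan2017dirac}, or a van der Corput argument in polar coordinates.

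The main obstacle is the second step. The $k_2$-integration by parts produces $\partial_{k_2}b$, which involves $\partial_{k_2}T$, $\partial_{k_2}R$ and $\partial_{k_2}m_\pm$. Uniform two-dimensional stationary phase then needs further $k$-derivatives of these, but the weights only pay for one derivative in $k_2$. To handle this, I would integrate by parts once in the radial variable. I would then treat the remaining $k_2$-dependence via the Fourier representation of the Jost amplitudes, $m_\pm(x,k)-1=\int \hat B(x,\xi)e^{ik\xi}d\xi$ with $\hat B\in L^1$, and $T-1,R$ as Fourier transforms of $L^1$ functions (Wiener algebra). This converts each amplitude factor into an $L^1$ average of translates $x_1,x_2\mapsto x_1,x_2+\xi$. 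The required bound then reduces to a uniform-in-translation $t^{-2}$ bound for a model integral whose amplitude is $\partial_{k_2}$ of a smooth symbol, a bound the weighted free estimate already handles.
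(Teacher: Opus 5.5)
Once the detours are removed, your mechanism is the paper's. You integrate by parts once in $k_2$: the Jacobian factor $k_2/\nu$ is absorbed into $\partial_{k_2}e^{-it\nu}$, and regularity keeps $1/W_H$ harmless. You then control the remaining amplitude in the Wiener algebra and finish with the free two-dimensional Klein--Gordon bound. The closing step, however, is simply
\[
\Big|\int_{\R^2}e^{ik\cdot z}e^{-it\nu(k)}G(k)\,dk\Big|\le\big\|\mathcal F^{-1}\big(e^{-it\nu}\nu^{-2-}\big)\big\|_{L^\infty}\,\big\|\nu^{2+}G\big\|_{A(\R^2)},\qquad G=\partial_{k_2}\Big(\frac{g_-(x_2,\cdot)\,g_+(y_2,\cdot)^{\top}}{W_{\widehat D}\,\nu^{\gamma}}\Big).
\]
Lemma~\ref{lem:KGdisp} ($d=2$, exponent $>2$) gives $\la t\ra^{-1}$ for the first factor, and \eqref{eq:2dweightclaim} bounds the second by $\la x_2\ra\la y_2\ra$. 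No further derivatives of the amplitude and no stationary phase with rough symbols are needed, so the ``main obstacle'' you describe never arises. Your ``$L^1$ average of translates'' is exactly this inequality.

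Two steps you propose along the way are wrong and must be dropped. An extra radial integration by parts differentiates $e^{ik_1(x_1-y_1)}$, producing a factor $|x_1-y_1|$ that no weight controls, since the weights act only in $x_2,y_2$. It also puts a second $k_2$-derivative on the Jost data, costing more powers of $\la x_2\ra$ than you have. Nor is there a uniform-in-translation $t^{-2}$ bound for a free model integral $\int e^{ik\cdot z-it\nu(k)}\sigma(k)\,dk$: at $z=t\nabla\nu(k_0)$, stationary phase gives size $\sim t^{-1}|\sigma(k_0)|$. After the $k_2$ integration by parts you need, and have, only the uniform $\la t\ra^{-1}$.

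Two points of substance are misstated or missing. First, regularity $W_H(0)\neq 0$ gives $T(0)=0$ and $R(0)=-1$. The case $T(0)\neq0$ is the resonant one, and it is incompatible with $R(0)=-1$ since $|T|^2+|R|^2=1$. What is actually used is that $(ik_2-\minf)/W_H(k_2)$ and its $k_2$-derivative lie in $A_1(\R)$ (Lemma~\ref{lem:wronratio}). On the bad side one also needs $\|\varphi_-(x_2,\cdot)\|_{\mF TV}\les\la x_2\ra$ and $\|\partial_k\varphi_-(x_2,\cdot)\|_{\mF TV}\les\la x_2\ra^2$ for $x_2>0$. These come from $(e^{2ikx}-1)/k$ and require $L^1_3$ (Lemma~\ref{lem:badside}). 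For $0<x_2<y_2$ the cost is then $\la x_2\ra^2+|x_2-y_2|\la x_2\ra\les\la x_2\ra\la y_2\ra$, which is why the product of the weights is needed, not their sum.

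Second, the $k_1$-dependence is not automatically a harmless symbol. Since $W_{\widehat D}(k_1,k_2)=\frac{i\nu k_2-k_1\minf}{2(ik_2-\minf)}W_H(k_2)$ vanishes at $(k_1,k_2)=(0,0)$ whether or not $D$ is regular, the amplitude carries $(i\nu k_2-k_1\minf)^{-1}$. Elements of $A(\R^2)$ are continuous, so any leftover factor homogeneous of degree $0$ at the origin would break the argument. The paper extracts $(k_1-ik_2)$ from $\nu-\minf\pm(k_1-ik_2)$ using $\nu-\minf=\frac{k_1^2+k_2^2}{\nu+\minf}$, and then uses
\[
\frac{k_1-ik_2}{i\nu k_2-k_1\minf}=-\frac{\minf}{\minf^2+k_2^2}-\frac{ik_2}{\minf^2+k_2^2}\cdot\frac{k_1-ik_2}{\nu+\minf}.
\]
This shows the factor is a product of $\mF TV$ and $A(\R^2)$ pieces, smooth through the origin and hence safe under $\partial_{k_2}$. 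You need some such computation before asserting that the $k_1$-dependent part behaves as you assume.
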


The improvements in decay rates by weighting are not unprecedented, and they depend on the behavior of the resolvent near the threshold: for one-dimensional constant-mass
Dirac operators with a potential, regular thresholds yield the same improvement (from $t^{-1/2}$ to $t^{-3/2}$)
in weighted estimates
\cite{erdougan2021one}, as they do in the case of the two-dimensional massless Dirac equation \cite{erdougan2021massless}. In the massive Dirac equation in two dimensions with a regular threshold, weighting leads to a much more modest improvement, from $t^{-1}$ to $t^{-1}(\log t)^{-2}$ \cite{erdougan2018dispersive}; see
Remark~\ref{rmk:2dweighted}.

Such improvements of the decay rate by weighting are significant in at least
two ways. First, they show that a property of the domain wall which is
undetectable by topological considerations can alter the dynamics near the
interface. Second, the improved rates are {\em integrable} in time, i.e., they
are of order $\langle t\rangle^{-\sigma}$ for some $\sigma>1$, so that $
\int_0^\infty \langle t\rangle^{-\sigma}\,dt<\infty $.
This property is important in the analysis of such phenomena as the Fermi
Golden Rule under non-autonomous parametric forcing \cite{hameedi2022radiative, miller2000metastability, soffer1998nonautonomous}, as well as the
stability of waves in the presence of nonlinearity \cite{boussaid2006stable, boussaid2019nonlinear, boussaid2019spectral, d2005decay, escobedo1997semilinear, herr2026decay, machihara2005endpoint, pelinovsky2012asymptotic, pelinovsky2014orbital}.

We end our introduction by briefly discussing the proofs of the weighted and unweighted estimates of the one-dimensional operator,
$\Dx$; the analysis for the two-dimensional bulk estimates follows analogously through the Fourier fibers $\widehat{D}(k_1)$. To express the propagator, we study the scattering theory of $\Dx$: as noted above, squaring $\Dx$ yields the SUSY Schr{\"o}dinger pair
$H_\pm$; see \eqref{eq:Hpm-intro}. Since these are short-range perturbations of
$-\partial_x^2+m_\infty^2$, we are able to invoke their known scattering theory \cite{deift1979inverse} to construct the resolvent of $\Dx$ from the associated (Dirac) Jost
solutions and their Wronskian,
$W_{\Dx}(k)$. Thus we obtain the propagator as a product of a
Klein-Gordon phase, with known decay rates (Sec.\ \ref{sec:KG}), and a scattering amplitude. To get the ``free'' Klein-Gordon rate, using Young's inequality, it is enough to control this amplitude uniformly in the
Wiener algebra, following
\cite{goldberg2004dispersive}. The weighted improvement follows by an additional
integration by parts, enabled by a non-vanishing of the associated Dirac Wronskian, in the spirit of \cite{goldberg2007dispersive}. Finally, we believe that the regularity hypotheses on the domain wall $m$ in the bounded case (Theorems \ref{thm:1d_unweighted}, \ref{thm:1dWeighted}, \ref{thm:2d_main_theorem}, \ref{thm:2dweighted}) could likely be lowered 
in all cases; see \cite{Egorova_2016,hill2020dispersive}.

\subsection{Structure of the paper}\label{sec:structure}
Central notations and general facts are tabulated in Sec.\ \ref{sec:notations}, as well as key results in the analysis of oscillatory integrals and a brief review of the theory of Wiener algebras.  The analysis of the bounded domain wall cases is then carried out in Secs. \ref{sec:1d_shrodinger}--\ref{sec:2d_bounded_estimates}, in the following way: underlying our analysis in one and two dimensions is the scattering theory of one-dimensional Schr{\"o}dinger operators with short-range potentials, presented in Sec.\ \ref{sec:1d_shrodinger}. Having defined the Schr{\"o}dinger Hamiltonians associated with $\Dx^2$, we mostly  review of the construction and properties of the associated Schr{\"o}dinger Jost functions and Wronskians, in the spirit of \cite{deift1979inverse}.  These objects are then
used in Sec.~\ref{sec:1d_estimates} to construct the Dirac Jost solutions
and resolvent kernel for $\Dx$, and thereby to prove the unweighted and
weighted decay estimates, Theorems~\ref{thm:1d_unweighted}
and~\ref{thm:1dWeighted}, respectively.  Section~\ref{sec:2d_bounded_estimates} adapts
this construction uniformly to the fiber operators $\widehat D(k_1)$
and proves the corresponding two-dimensional dispersive estimates, Theorem \ref{thm:2d_main_theorem} and \ref{thm:2dweighted}. Having carried out the analysis of the bounded domain wall, the two-dimensional Dirac Hamiltonian with {\em unbounded} domain is presented in Sec.\ \ref{sec:unbd}, whose main results are the dispersive decay estimates in Theorem \ref{thm:unbndmdisp}. Finally,
the auxiliary Sec.~\ref{sec:KG} proves the standard Klein--Gordon kernel
bounds used throughout the paper.

\subsection{Acknowledgments}
MBE is partially supported by NSF Grant No. \ DMS-2154031. AS is partially supported by NSF Grant No.\ DMS-2508811. The authors would like to thank Jacob Shapiro for useful comments and discussions.

\section{Notation and convention}\label{sec:notations}
\begin{itemize}
    \item The Fourier transform $\mF$ and its inverse are defined
    $$\mF[f](k) = \widehat{f}(k)= \int_{\R^d} e^{-ik\cdot x}f(x)dx,\quad \mF^{-1}[g](x)=\int_{\R^d}e^{ik\cdot x}g(k)\frac{dk}{(2\pi)^{d}}.$$
    \item The weighted Lebesgue spaces $L^{p}_{\sigma}$ for $p\in [1,\infty]$ and $\sigma\in\R$ are the space of functions with norm $\|f\|_{L^p_{\sigma}}=\| \langle \cdot \rangle^{\sigma} f\|_{L^p}$.  
    \item For two functions $f$ and $g$, we write $f(x)\sim g(x)$ as $x\to x_0$ if $\lim_{x\to x_0} f(x)/g(x) = 1$. We write $f(x)\lesssim g(x)$ if there is an $x$-independent constant such that $f(x)\leq C g(x)$.
    \item The Pauli matrices $\sigma_j$ for $j=1,2,3$ are defined
    \begin{equation}\label{eq:pauli}
        \sigma_1 = \begin{pmatrix}
            0 & 1\\
            1 & 0
        \end{pmatrix},\quad  \sigma_2 = \begin{pmatrix}
            0 & -i\\
            i & 0
        \end{pmatrix},\quad  \sigma_3 = \begin{pmatrix}
            1 & 0\\
            0 & -1
        \end{pmatrix}.
    \end{equation}
    \item {\bf The Wiener algebra,} $A(\R)$, and unital Wiener algebra, $A_1(\R)$, are defined
\begin{align*}
   A(\R) = \{ f \ |\  \widehat{f}\in L^1(\R)\}, \qquad A_1(\R) = \{ g+c\ ~|~  c\in\C ~ ,~~g\in A(\R)\} \, .
\end{align*}
We define the associated norms
$$ \|f\|_{A (\R)} \equiv \|\widehat f \|_{L^1},\quad  \|f\|_{A_1(\R)} \equiv \| \widehat{f}\|_{TV}= |c| + \|\widehat{g}\|_{L^1}.$$

We will also work with the larger space $\mF TV(\R)\supset A_1(\R)\supset A(\R)$, the algebra of continuous functions whose Fourier transform is a finite Borel measure, with the norm
$$
\|f\|_{\mF TV(\R)}\equiv\|\widehat{f}\|_{TV}.
$$
Note that $A(\R)$ is an ideal in $\mF TV(\R)$ and $\|fg\|_{A(\R)}\leq \|f\|_{\mF TV(\R)} \|g\|_{A(\R)}.$
The following lemma is Lemma 6.3 in \cite{Katznelson_2004}.
\begin{lemma}[Wiener]\label{lem:Wiener}
Let $f_1,f_2\in A(\R)$ such that $f_1$ has compact support and $f_2$ is nonzero on the support of $f_1$, then there exists $g\in A(\R)$ such that $f_1 = gf_2$.
\end{lemma}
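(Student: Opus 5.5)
The plan is the classical Wiener--L\'evy argument: produce a function $h\in A(\R)$ that coincides with $1/f_2$ on $\mathrm{supp}\, f_1$, and then set $g=hf_1$. Since $A(\R)$ is an algebra under pointwise multiplication ($\widehat{hf_1}=(2\pi)^{-1}\widehat h*\widehat{f_1}$, which is in $L^1$ by Young's inequality), $g\in A(\R)$. Moreover $gf_2=f_1\,(hf_2)=f_1$, because $hf_2=1$ on $\mathrm{supp}\, f_1$ and $f_1=0$ off it. So everything reduces to a local inversion statement. Note that $f_2\in A(\R)$ is continuous, being the inverse Fourier transform of an $L^1$ function.

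The first step is localization. Let $K=\mathrm{supp}\, f_1$, which is compact. By continuity $|f_2|\ge \delta>0$ on an open neighborhood of $K$. For each $x_0\in K$, I will construct $h_{x_0}\in A(\R)$ with $h_{x_0}f_2=1$ on a neighborhood $U_{x_0}$ of $x_0$. Cover $K$ by finitely many such $U_{x_j}$, and choose a smooth partition of unity $\{\chi_j\}\subset C_c^\infty$ subordinate to this cover, with $\sum_j\chi_j=1$ on $K$. Then set $h=\sum_j\chi_j h_{x_j}$. Each term lies in $A(\R)$ because $C_c^\infty\subset A(\R)$ and $A$ is an algebra. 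On $K$ we get $hf_2=\sum_j\chi_j=1$, since wherever $\chi_j\neq0$ we have $h_{x_j}f_2=1$.

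The second step, the local inversion, is the main obstacle. Fix $x_0$ and let $\psi\in C_c^\infty$ be a bump equal to $1$ near $x_0$ and supported in $[x_0-\eps,x_0+\eps]$. Also take a fixed bump $\Psi$ that equals $1$ on $\mathrm{supp}\,\psi$ and is supported in $[x_0-2\eps,x_0+2\eps]$. Write
\[
\Psi f_2 = f_2(x_0)\Psi + \Psi\,(f_2-f_2(x_0)) =: f_2(x_0)\big(\Psi + r\big).
\]
The key claim is that $\|\psi\, r\|_{A(\R)}\to0$ as $\eps\to0$ for suitably rescaled bumps $\psi_\eps(x)=\psi((x-x_0)/\eps)$. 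I would prove it by splitting $\widehat{f_2}=\widehat{u}+\widehat{v}$, with $\widehat u$ compactly supported (so $u$ is smooth) and $\|\widehat v\|_{L^1}$ small. The bumps have $\|\psi_\eps\|_{A}$ bounded uniformly in $\eps$, since the $L^1$ norm of the Fourier transform is dilation invariant. Hence the $v$-part contributes $O(\|\widehat v\|_{L^1})$. For the $u$-part, $\psi_\eps(u-u(x_0))$ is a smooth function of size $O(\eps)$ whose derivative is $O(1)$ on an interval of length $O(\eps)$. Using $\|\phi\|_A\lesssim\|\phi\|_{L^2}^{1/2}\|\phi'\|_{L^2}^{1/2}$ (Cauchy--Schwarz with the weight $\langle k\rangle$), this gives $O(\eps^{1/2}\cdot\eps^{1/2})\cdot\eps^{0}\to0$ after tracking the scaling; more precisely, one gets $O(\eps^{1/2})$ or better.

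Once $\|\psi_\eps r\|_A<1$, the Neumann series
\[
\psi_\eps\sum_{n\ge0}(-\psi_\eps r)^n
\]
converges in $A(\R)$, using submultiplicativity of the norm. This yields $h_{x_0}=f_2(x_0)^{-1}\,\tilde\psi\sum_n(-\psi_\eps r)^n$, with $\tilde\psi$ a smaller bump equal to $1$ near $x_0$ and supported where $\psi_\eps=1$. On that smaller neighborhood, $h_{x_0}f_2=f_2(x_0)^{-1}(1+r)^{-1}\cdot f_2(x_0)(1+r)=1$. This completes the construction, and with it the factorization $f_1=gf_2$.
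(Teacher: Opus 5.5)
Your proof is correct. The paper gives no proof of this lemma; it is quoted from Katznelson. So the only comparison is with the textbook route, which typically derives it from general Wiener/Gelfand machinery: invertibility in a regular Banach algebra, or Wiener's $1/f$ theorem after periodization.

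Your argument is the classical hands-on version. You replace abstract invertibility by the concrete localization estimate $\|\psi_\eps(f_2-f_2(x_0))\|_{A(\R)}\to0$, invert locally with an explicit Neumann series, and glue the local inverses with a smooth partition of unity. This makes the proof self-contained and quantitative. It uses only Young's inequality, the dilation invariance of $\|\widehat{\psi_\eps}\|_{L^1}$, and an elementary Sobolev-type bound. The citation, in exchange, buys brevity and generality.

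A few details deserve tightening; none of them is a gap.

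\begin{enumerate}
\item The bound $\|\phi\|_{A(\R)}\les\|\phi\|_{L^2}^{1/2}\|\phi'\|_{L^2}^{1/2}$ is true, but Cauchy--Schwarz with the weight $\la k\ra$ only gives the non-scale-invariant bound $\|\phi\|_{A(\R)}\les\|\phi\|_{H^1}$. Instead, split $\int|\widehat\phi|$ at $|\xi|=R$, apply Cauchy--Schwarz on each piece, and optimize in $R$ (equivalently, rescale). For $\phi_\eps=\psi_\eps(u-u(x_0))$ you have $\|\phi_\eps\|_{L^2}=O(\eps^{3/2})$ and $\|\phi_\eps'\|_{L^2}=O(\eps^{1/2})$. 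So the $u$-part is in fact $O_u(\eps)$.
\item Take $\psi$ to be a fixed unit-scale bump and $\psi_\eps$ its rescaling. The auxiliary $\Psi$ is unnecessary, since $\psi_\eps r=\psi_\eps(f_2-f_2(x_0))/f_2(x_0)$.
\item Make the choices in the right order: first choose $v$ with $\|\widehat v\|_{L^1}$ small compared to $|f_2(x_0)|$, then choose $\eps$ depending on $u$.
\item With the paper's normalization, $\|ab\|_{A(\R)}\le(2\pi)^{-1}\|a\|_{A(\R)}\|b\|_{A(\R)}$ and $\|a\|_{L^\infty}\le(2\pi)^{-1}\|a\|_{A(\R)}$. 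The second inequality shows that the Neumann series converges uniformly and that $|r|<1$ where $\psi_\eps=1$. This justifies identifying its sum with $(1+r)^{-1}$ there, and hence $h_{x_0}f_2=1$ on that neighborhood.
\end{enumerate}
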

We also state a version for the space $A_1(\R)$.
\begin{lemma}\label{lem:WienerA1}
Suppose $f=g+c\in A_1(\R)$ with $g\in A(\R)$ and $c\neq 0$, such that $f$ never vanishes. Then $1/f\in A_1(\R)$. 
\end{lemma}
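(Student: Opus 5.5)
Write $1/f = \frac{1}{c}\cdot\frac{1}{1+g/c}$. Replacing $g$ by $g/c$, we may therefore assume $c=1$ and show that $1/(1+g)-1\in A(\R)$, where $g\in A(\R)$ and $1+g$ never vanishes. The idea is to reduce to Lemma~\ref{lem:Wiener}: handle large $|x|$ with a Neumann series and bounded $x$ with the local Wiener lemma, then glue the two pieces with a partition of unity.

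\textbf{Large $|x|$.} Since $\widehat g\in L^1$, Riemann--Lebesgue gives $g(x)\to 0$ as $|x|\to\infty$. What we really need, though, is a localized version of $g$ that is small in $A$-norm, not merely pointwise small. Fix a smooth $\chi$ with $\chi=1$ on $[-1,1]$ and $\chi=0$ outside $[-2,2]$, and set $\chi_R(x)=\chi(x/R)$. The standard Wiener-algebra fact is that $\|(1-\chi_R)g\|_{A}\to0$ as $R\to\infty$. To see it, approximate $g$ in $A$ by a Schwartz function $h$. Then:
\begin{itemize}
\item $\|(1-\chi_R)h\|_A\to0$ directly, since $(1-\chi_R)h$ is small in $H^1$ and $\|\cdot\|_A\lesssim\|\cdot\|_{H^1}$;
\item $\|(1-\chi_R)\|_{\mF TV}\le 1+\|\widehat\chi\|_{L^1}$ uniformly in $R$, so $\|(1-\chi_R)(g-h)\|_A\lesssim\|g-h\|_A$.
\end{itemize}
Choose $R$ with $\|(1-\chi_R)g\|_A<1/2$. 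The Neumann series then gives
\[
u:=\frac{1}{1+(1-\chi_R)g}-1=\sum_{n\ge1}\bigl(-(1-\chi_R)g\bigr)^n\in A(\R),
\]
convergent because $A$ is a Banach algebra.

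\textbf{Bounded $x$.} Take $\psi=\chi_{2R}$. Since $1+g\neq 0$ everywhere, the plan is to apply Lemma~\ref{lem:Wiener} with $f_1=\psi$ and a suitable $f_2\in A(\R)$ that agrees with $1+g$ on ${\rm supp}\,\psi$. The difficulty is that $1+g\notin A$. To fix this, take $f_2=\chi_{8R}(1+g)$. This lies in $A$ because $\chi_{8R}\in A$ and $\chi_{8R}g\in A$, and it is nonzero on ${\rm supp}\,\psi\subset[-4R,4R]$, where $\chi_{8R}=1$. Lemma~\ref{lem:Wiener} gives $v\in A$ with $\psi=v f_2=v(1+g)$ on ${\rm supp}\,\psi$. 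Away from ${\rm supp}\,\psi$ this identity can fail, since $v$ need not vanish there. So we instead use $\psi v$, replacing $v$ by $\chi_{8R} v$ if needed, which satisfies $\psi v\,(1+g)=\psi^2$ exactly on $\R$.

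\textbf{Gluing.} Use a partition of unity $\psi_1^2+\psi_2^2=1$, with $\psi_1$ supported in $|x|\le 4R$ and $\psi_2$ supported in $|x|\ge R$. Then:
\begin{itemize}
\item on ${\rm supp}\,\psi_1$, the local inverse gives $\psi_1^2/(1+g)=\psi_1(\psi_1 v)\in A$;
\item on ${\rm supp}\,\psi_2$, where $1-\chi_R=1$, we have $1+g=1+(1-\chi_R)g$, so $\psi_2^2/(1+g)=\psi_2^2(1+u)$.
\end{itemize}
Summing,
\[
\frac1{1+g}=\psi_1^2v'+\psi_2^2+\psi_2^2u.
\]
The only term not in $A$ is $\psi_2^2=1-\psi_1^2$, which lies in $A_1$ (constant $1$ plus an element of $A$). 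Hence $1/(1+g)\in A_1$ with constant part $1$, as claimed.

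The main obstacle is bookkeeping: making sure each piece is genuinely in $A$ and not merely in $\mF TV$, and that the local inverse identity holds on the full support of each cutoff. This is handled by choosing the nested radii $R<2R<4R<8R$ as above.
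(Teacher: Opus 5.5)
The paper does not prove this lemma; it states it as the standard unital companion of Lemma~\ref{lem:Wiener}. The quickest justification is Gelfand theory. $A_1(\R)$ is the Fourier image of the unitization $L^1(\R)\oplus\C\delta_0$, whose maximal ideal space is $\R\cup\{\infty\}$, with the point at infinity acting by $g+c\mapsto c$. So $f$ is invertible exactly when it has no zeros on $\R$ and $c\neq0$.

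Your proof takes a genuinely different, hands-on route: a Neumann series near infinity, Lemma~\ref{lem:Wiener} on a compact set, and a partition of unity. It uses only the compactly supported Wiener lemma the paper already quotes, and it shows explicitly that the constant part of $1/f$ is $1/c$. The large-$|x|$ step is correct, including your proof that $\|(1-\chi_R)g\|_{A(\R)}\to0$.

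The gluing step, however, does not work with the radii you chose.
\begin{itemize}
\item Since $\chi_R=1$ on $[-R,R]$ and is only known to vanish for $|x|\ge 2R$, the identity $1+g=1+(1-\chi_R)g$ fails in general on $R\le|x|<2R$. That region lies inside your ${\rm supp}\,\psi_2$.
\item The local inverse $v$ obtained from $\psi=\chi_{2R}$ satisfies $v(1+g)=\chi_{2R}$ on $[-4R,4R]$, and this equals $1$ only on $[-2R,2R]$. So $\psi_1^2/(1+g)=\psi_1^2 v$ fails on $2R<|x|\le 4R$. Also, the $v'$ in your final formula is never defined.
\item Merely shrinking supports does not help. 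A $\psi_1$ supported in $[-2R,2R]$ and a $\psi_2$ supported in $\{|x|\ge 2R\}$ cannot form a smooth partition of unity.
\end{itemize}

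The fix is to apply Lemma~\ref{lem:Wiener} to the cutoff itself.
\begin{enumerate}
\item Choose smooth $\psi_1,\psi_2$ with $\psi_1^2+\psi_2^2=1$, ${\rm supp}\,\psi_1\subset[-4R,4R]$ and ${\rm supp}\,\psi_2\subset\{|x|\ge 2R\}$.
\item Take $f_1=\psi_1$ and $f_2=\chi_{8R}(1+g)$. Lemma~\ref{lem:Wiener} gives $v'\in A(\R)$ with $v'=\psi_1/(1+g)$ on ${\rm supp}\,\psi_1$. Hence $\psi_1^2/(1+g)=\psi_1 v'$ on all of $\R$.
\item On ${\rm supp}\,\psi_2$ we have $\chi_R=0$, so $\psi_2^2/(1+g)=\psi_2^2(1+u)$ holds there and trivially elsewhere.
\item Therefore $1/(1+g)=\psi_1v'+\psi_2^2(1+u)=1+\bigl(\psi_1v'-\psi_1^2+\psi_2^2u\bigr)$. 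The bracket lies in $A(\R)$ because $\psi_2^2\in A_1(\R)$ and $A(\R)$ is an ideal.
\end{enumerate}
With this change, the rest of your argument goes through unchanged.
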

Finally, we have the following useful embedding: $H^s(\R) \hookrightarrow A(\R)$ for all $s>1/2$.

    \item We use the following version of the Van der Corput Lemma throughout the paper.

    \begin{lemma}[Van der Corput \cite{stein1993harmonic}]\label{lem:vdc}
    Let $\phi:\R \to \R$ be  smooth and let $\psi \in C^{\infty}_c (\R )$. If $|\partial_k^{2}\phi(k)|\geq \lambda > 0$ for all $k\in {\rm supp}(\psi)$, then there exists a universal constant $C>0$ such that
    \begin{equation*}
        \Big| \int\limits_{\R} 
        e^{i\phi (k)}\psi(k) \, dk \Big| \leq C \lambda^{-\frac12} \left\| \partial_k \psi(k) \right\|_{L^1{(\R_k)}} \, .
    \end{equation*}
\end{lemma}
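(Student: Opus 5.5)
The strategy is the classical two-step argument. First I prove the bound for $\psi=\indicator_{[a,b]}$ on an interval where $\phi''\ge\lambda$, with a constant independent of $a,b$ and $\phi$. Then I reach general $\psi$ by integrating by parts against $\psi'$.

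\emph{Step 1 (first-derivative estimate).} Suppose $\phi'$ is monotone on $[a,b]$ and $|\phi'|\ge\delta>0$ there. Write $e^{i\phi}=\frac{1}{i\phi'}\frac{d}{dk}e^{i\phi}$ and integrate by parts:
\begin{equation*}
\int_a^b e^{i\phi}\,dk=\Big[\frac{e^{i\phi}}{i\phi'}\Big]_a^b-\int_a^b e^{i\phi}\frac{d}{dk}\Big(\frac{1}{i\phi'}\Big)dk .
\end{equation*}
The boundary terms are at most $2/\delta$. Since $\phi'$ is monotone and of fixed sign, $1/\phi'$ is monotone, so the remaining integral is bounded by the total variation $|1/\phi'(b)-1/\phi'(a)|\le 1/\delta$. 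Hence $\big|\int_a^b e^{i\phi}\big|\le 3/\delta$.

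\emph{Step 2 (second-derivative estimate on an interval).} Assume $\phi''\ge\lambda$ on $[a,b]$; the case $\phi''\le-\lambda$ follows by complex conjugation. Then $\phi'$ is increasing and vanishes at most at one point $c$ (if it does not vanish, take $c$ to be the endpoint where $|\phi'|$ is smallest). By the mean value theorem, $|\phi'(k)|\ge\lambda|k-c|$. Split $[a,b]$ into the set $\{|k-c|<\delta/\lambda\}$ and its complement, which consists of at most two intervals.
\begin{itemize}
\item On the first set, bound the integral trivially by its length, $2\delta/\lambda$.
\item On each interval of the complement, $|\phi'|\ge\delta$ and $\phi'$ is monotone, so Step 1 gives at most $3/\delta$ each.
\end{itemize}
Choosing $\delta=\lambda^{1/2}$ yields $\big|\int_a^b e^{i\phi}\big|\le 8\lambda^{-1/2}$, uniformly in $a,b$.

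\emph{Step 3 (general amplitude).} The subtlety is that the hypothesis $|\phi''|\ge\lambda$ holds only on ${\rm supp}(\psi)$, which need not be an interval. I therefore decompose the open set $\{\psi\neq0\}$ into its countably many connected components $I_j=(a_j,b_j)$, which are bounded since $\psi$ has compact support. By continuity, $|\phi''|\ge\lambda$ on each $\overline{I_j}$, so $\phi''$ has a constant sign there. Moreover $\psi(a_j)=0$, so $\psi(k)=\int_{a_j}^k\psi'(s)\,ds$ on $I_j$. Fubini then gives
\begin{equation*}
\int_{I_j}e^{i\phi}\psi\,dk=\int_{I_j}\psi'(s)\Big(\int_s^{b_j}e^{i\phi(k)}\,dk\Big)ds .
\end{equation*}
By Step 2 the inner integral is at most $8\lambda^{-1/2}$, so the whole term is bounded by $8\lambda^{-1/2}\int_{I_j}|\psi'|$. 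Since $\psi$ vanishes off $\bigcup_j I_j$, summing over $j$ gives the claim with $C=8$.

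The only delicate points are the uniformity of the constant in Step 2, and the use of the components of $\{\psi\ne0\}$ so that the second-derivative hypothesis is never needed off the support. Everything else is routine.
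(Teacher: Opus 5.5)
Your proof is correct and follows the standard argument in Stein, which the paper cites rather than reproving: the first-derivative bound with constant $3$, then the second-derivative bound with constant $8$ via the choice $\delta=\lambda^{1/2}$, then integration by parts against $\psi'$. Your extra step of splitting $\{\psi\neq 0\}$ into connected components is worthwhile, because the paper assumes $|\phi''|\geq\lambda$ only on ${\rm supp}(\psi)$, which need not be an interval (e.g.\ the cutoffs $\chi_j(k)=\chi(2^{-j}|k|)$ used in Lemma~\ref{lem:KGdisp} for $d=1$ are supported on two disjoint intervals).
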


\end{itemize}

\section{Review of Scattering theory for one-dimensional Schr\"odinger operators}\label{sec:1d_shrodinger}
In this section we review the scattering theory for one-dimensional  Schr\"odinger Hamiltonians with decaying potentials. Many of the results can be found in \cite{deift1979inverse, goldberg2007dispersive, goldberg2004dispersive, Thaller_Dirac}, but we include the discussion for completeness. These results, and in particular Lemmas \ref{lem:wronratio} and \ref{lem:badside}, are  fundamental to studying the Dirac operators in Secs.\ \ref{sec:1d_estimates} and \ref{sec:2d_bounded_estimates}.

\subsection{Schr\"odinger Jost functions}\label{sec:Schrodinger_Jost}
Consider a one-dimensional Schr\"odinger operator with a real-valued potential:  
\begin{align*}
    H = -\partial_x^2 + V(x) \, ,\qquad V(x)\in L^1_1(\R) \, ,
\end{align*}
 where, for $\sigma\in\R$,
\begin{align*}
    \|V\|_{L_\sigma^1(\R)} \equiv \int_{\R}\langle x\rangle^\sigma \vert V(x)\vert\ dx \, .
\end{align*}
For each $k\in\R$, $k\neq 0$, one can define the corresponding \textbf{Jost functions} $f_{\pm}(x,k)$ as the unique functions satisfying
\begin{align}\label{eq:HJost}
    (H - k^2)f_{\pm} = 0 \, ,\qquad f_{\pm}(x,k)&\sim e^{\pm ikx}\, ,\quad  {\rm as}~~ x\to \pm \infty \,.
\end{align}  
To each Hamiltonian $H$ we associate two special Wronskians $W_{H}(k)$ and $\widetilde{W_{H}}(k)$ defined
\begin{align*}
    W_{H}(k) \equiv  W[f_{+}(\cdot,k),f_{-}(\cdot,k)] \, , \qquad \widetilde{W_{H}}(k) \equiv W[f_{-}(\cdot,k),f_{+}(\cdot,-k)]
\end{align*}
with the convention that the Wronskian of two functions $f(x), g(x)$ is $W[f,g] = f(x)g'(x)-f'(x)g(x)$. As $f_{\pm}(x,-k)$ satisfy the same ordinary differential equation as $f_{\pm}(x,k)$, and since the Wronskians $W_H, \widetilde{W}_H$ are independent of $x$, and by the asymptotics of $f_{\pm}$, a standard derivation yields the following relations
\begin{align}\label{eq:fpm_relation}
    f_{-}(x,k) = \alpha(k)f_{+}(x,k) + \beta(k)f_{+}(x,-k) \, ,
\end{align}
where $\alpha(k) = \dfrac{\widetilde{W_{H}}(k)}{-2ik}$ and $\beta(k)=\dfrac{W_{H}(k)}{-2ik}$. There is an analogous identity for $f_{+}(x,k)$ in terms of $f_{-}(x,\pm k)$.

Closely related are the modified Schr\"odinger Jost functions $\varphi_{\pm}(x,k)$, where we cancel the asymptotic phase behavior by
\begin{align*}
    e^{+ikx}\varphi_{+}(x,k) = f_{+}(x,k)\, ,\qquad e^{-ikx}\varphi_{-}(x,k) = f_- (x,k)  \, .
\end{align*}
We summarize properties of the modified Schr\"odinger Jost functions below. By Deift and Trubowitz \cite{deift1979inverse}, we have the following bounds on the Fourier Transform of the modified Jost functions, $\varphi_{\pm}$, and their derivatives. Let $\widehat{\varphi}_{\pm}(x,\xi)$  be the the $k\to\xi$ Fourier transforms of these functions, and let $\delta_0$ be the Dirac measure at $\xi=0$, then 
\begin{subequations}\label{eq:DT_bounds}
    \begin{align}
    &\sup_{x\geq 0}|\widehat{\varphi} _{+}(x;\xi)-\delta_0 |\lesssim I(\xi) \,, \qquad {\rm where}~~~ I(\xi) \equiv  \int_{|\xi|}^{\infty} |V(t)| \,  dt  \, ,\\
    &\sup_{x\leq 0}|\widehat{\varphi} _{-}(x;\xi)-\delta_0|\lesssim I(\xi) ,\\
    &|\partial_{x}\widehat{\varphi} _{+}(x;\xi)|\lesssim I(\xi)+|V(x+\xi)|,\qquad x\geq 0 \, , \\
    &|\partial_{x}\widehat{\varphi} _{-}(x;\xi)|\lesssim I(\xi)+|V(x-\xi)|,\qquad x\leq 0 \, .
\end{align}
\end{subequations}
In particular if $V\in L^1_1(\R)$ then we have that $\varphi_{+}(x;k)\in A_1 (\R)$ for all $x\geq 0$  and $\varphi_{-}(x,k)\in A_1 (\R)$  for all $x\leq 0$. Since the bounds $I(\xi)$ do not depend on $x$, the $A_1(\R)$ norm of both functions in the corresponding half-line are uniformly bounded in $x$. The following lemma is similar in spirit to \cite[Lemma 5]{goldberg2004dispersive} and \cite[Proposition 6]{goldberg2007dispersive}, but allows us to handle the Wronskians  globally in $k$.

The next two lemmas tabulate the properties of the $k$-Fourier transform of the Wronskian (Lemma \ref{lem:wronratio}) and of products of the Wronskian and the modified Jost functions in the non-resonant case (Lemma \ref{lem:badside}). These Lemmas will be used in the proofs of the unweighted and weighted dispersive estimates, respectively.

\begin{lemma}\label{lem:wronratio}
Let $\lambda(k)$ be a complex valued $C^\infty$ function on $\R$ satisfying   $\lambda(k)=k$ for $|k|>1$, and $|\lambda(k)|\gtrsim 1$ on $\R$. If $W_H(0)\neq 0$, then 
 $$
\widetilde{W}_H(k)~, ~\frac{\widetilde{W}_H(k)}{\lambda(k)}\in A(\R) \, ,\qquad \frac{\lambda(k)}{W_{H}(k)}\in A_1(\R)\,,
$$
provided that   $V\in L_1^1$. 
If instead $W_H(0)=0$, then the same conclusions hold with $\lambda(k)=k$, provided that $V\in L_2^1$. 

Finally, in both cases, we have 
$$\frac{\partial_k W_H(k)}{\lambda(k)} \in A_1(\R)\,,
$$
provided that $V\in L_2^1$.
\end{lemma}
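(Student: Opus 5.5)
The plan is to write both Wronskians in terms of the modified Jost functions evaluated at $x=0$, and then use the Deift--Trubowitz bounds \eqref{eq:DT_bounds}. Since Wronskians are independent of $x$, we evaluate at $x=0$. With $f_+(0,k)=\varphi_+(0,k)$, $f_+'(0,k)=ik\varphi_+(0,k)+\varphi_+'(0,k)$, and similarly for $f_-$, this gives
\begin{align*}
W_H(k)&=-2ik\,\varphi_+(0,k)\varphi_-(0,k)+\varphi_+(0,k)\varphi_-'(0,k)-\varphi_+'(0,k)\varphi_-(0,k)\,,\\
\widetilde W_H(k)&=\varphi_-(0,k)\varphi_+'(0,-k)-\varphi_-'(0,k)\varphi_+(0,-k)\,.
\end{align*}
Here the $ik$ terms cancel in $\widetilde W_H$. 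By \eqref{eq:DT_bounds}, $\varphi_\pm(0,\cdot)\in A_1(\R)$ for $V\in L^1_1$, because $\int I(\xi)\,d\xi\lesssim\|V\|_{L^1_1}$. Likewise $\varphi_\pm'(0,\cdot)\in A(\R)$, since the $x$-derivative kills the $\delta_0$ and is bounded by $I(\xi)+|V(\pm\xi)|\in L^1$. Reflection $k\mapsto -k$ preserves these spaces. So $\widetilde W_H\in A(\R)$, being a product of an $A_1$ function and an $A$ function, and $A$ is an ideal. Then $\widetilde W_H/\lambda\in A$ follows by multiplying by $1/\lambda$. This factor lies in $\mF TV$: it equals $1/k$ outside $[-1,1]$, and splitting $1/\lambda=\chi/\lambda+(1-\chi)/k$, the first piece is smooth and compactly supported, hence in $A$. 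The second piece, $(1-\chi)/k$, is in $H^1$, hence in $A$ by the embedding $H^s\hookrightarrow A$.

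For $\lambda/W_H$, write $W_H(k)=-2ik\,\varphi_+\varphi_-(0,k)+g(k)$ with $g\in A$. Then
\[
\frac{W_H}{\lambda}=-2i\,\frac{k}{\lambda}\,\varphi_+\varphi_-(0,k)+\frac{g}{\lambda}\,.
\]
Here $k/\lambda\in A_1$ (it is $1+{}$a compactly supported smooth function), and $g/\lambda\in A$ as above. Hence $W_H/\lambda=-2i+h$ with $h\in A$, using $\varphi_\pm(0,\cdot)\in\delta_0+L^1$ on the Fourier side. It remains to check nonvanishing. For $k\neq0$ real, $W_H(k)\neq0$ by the standard argument ($|W_H|^2=|\widetilde W_H|^2+4k^2$, i.e.\ $|T|\le1$). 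At $k=0$, nonvanishing holds by hypothesis. Lemma~\ref{lem:WienerA1} then gives $\lambda/W_H\in A_1$.

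In the resonant case $W_H(0)=0$ with $\lambda=k$, the quotient $W_H/k$ is the only delicate object; this is the step I expect to be the main obstacle. I would show $W_H(k)/k\in A_1$ by writing $W_H(k)-W_H(0)=k\int_0^1\partial_kW_H(sk)\,ds$, or more concretely via a Fourier-side representation. If $\widehat{W_H}=\widehat{G}$ with $G\in A$ and $\int\widehat G=0$, then $G(k)/k$ has Fourier transform $i\int_{-\infty}^\xi\widehat G$ up to constants, and this is integrable when $\xi\widehat G\in L^1$. That integrability is where $V\in L^1_2$ enters, since $\int|\xi| I(\xi)\,d\xi\lesssim\|V\|_{L^1_2}$. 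Then $W_H(k)/k$ is nonvanishing everywhere: at $0$ the limit $W_H'(0)\neq0$, a standard fact in the resonant case (the transmission coefficient stays nonzero), and away from $0$ the argument above applies. The same reasoning shows $\widetilde W_H(k)/k\in A$: it vanishes at $0$ as well, because in the resonant case $\widetilde W_H(0)=\pm W_H(0)=0$ up to sign.

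Finally, for $\partial_kW_H/\lambda$, differentiate the $x=0$ formula in $k$. Multiplication by $k$ on the Fourier side becomes differentiation, and $\partial_k$ becomes multiplication by $-i\xi$. So $\partial_k\varphi_\pm(0,\cdot)$ and $\partial_k\varphi_\pm'(0,\cdot)$ lie in $A$, since their transforms are bounded by $|\xi|(I(\xi)+|V(\xi)|)\in L^1$ exactly when $V\in L^1_2$. Thus
\[
\partial_kW_H=-2i\varphi_+\varphi_-(0,k)-2ik\,\partial_k(\varphi_+\varphi_-)(0,k)+A\text{-terms}\,,
\]
and dividing by $\lambda$ gives $A_1$ terms, using $k/\lambda\in A_1$ and $1/\lambda\in\mF TV$ as before.
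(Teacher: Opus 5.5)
Your proposal is correct and follows essentially the same route as the paper. You evaluate both Wronskians at $x=0$ via the modified Jost functions, use \eqref{eq:DT_bounds} to place the pieces in $A$ and $A_1$, check nonvanishing and apply Lemma~\ref{lem:WienerA1}, handle the resonant case with the Fourier-side antiderivative formula (where $V\in L^1_2$ supplies the $|\xi|$-weighted integrability), and differentiate the $x=0$ formula for the last claim. The one point to tidy is that the antiderivative trick has to be applied to the $A$-part $g=W_H+2ik\,\varphi_+(0,k)\varphi_-(0,k)$, which vanishes at $0$, rather than to $W_H$ itself, which grows linearly and is not in $A$; the paper gets the same effect by working with $\chi W_H$ for a cutoff $\chi$.
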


\begin{proof}
In terms of the modified Schr\"odinger Jost functions 
\begin{align}\label{eq:WHform}
    W_{H}(k) = \varphi_{+}(0,k)\partial_x\varphi_{-}(0,k)-\varphi_-(0,k)\partial_x \varphi_{+}(0,k)-2ik\varphi_{+}(0,k)\varphi_{-}(0,k) .
\end{align}
We start with the case that $W_{H}(0)\neq 0$. Recall that $H^1 (\R)\hookrightarrow A(\R)$. Since $\frac{k}{\lambda(k)}-1\in H^1$ and $\frac1{\lambda(k)}\in H^1$, and since the $1$ term under the Fourier transform contributes a $\delta_0$ term, then  $\frac{k}{\lambda(k)}, \frac1{\lambda(k)}\in A_1(\R)$. Also using that $\varphi_\pm(0,k)$ and $\partial_x\varphi_\pm(0,k)$ are in $A_1(\R)$ (by \eqref{eq:DT_bounds}), we conclude that $W_{H}(k)\lambda^{-1}(k) \in A_1(\R)$. In addition, by the regularity assumption and the properties of $\lambda(k)$, we see that $W_{H}(k)\lambda^{-1}(k)  $ is nonvanishing. Therefore, by Wiener's lemma (Lemma \ref{lem:WienerA1}), we have 
\begin{align*}
\frac{\lambda(k)}{W_{H}(k)}\in A_1(\R).
\end{align*}
As for the case that $W_{H}(0)=0$, since this is a simple zero of $W_{H}$ \cite[Theorem 1, V (ii)] {deift1979inverse}, we have chosen $\lambda(k)=k$ in the resonant case, so we can get that $W_{H}(k)\lambda^{-1}(k)$ is continuous and nonzero. Let $\chi(k)$ be a smooth compactly supported function which is $1$ in a neighborhood of the origin. We have
\begin{equation}
    \frac{W_{H}(k)}{\lambda(k)} = \chi(k)\frac{W_{H}(k)}{\lambda(k)} + (1-\chi(k))\frac{W_{H}(k)}{\lambda(k)} \, .
\end{equation}\label{eq:WsOverLambda_decomp}
The second term is in $A_1(\R)$ by the same argument as above. For the first term, first rewrite
\begin{align*}
    \mF\left[\frac{\chi(k)W_{H}(k)}{k}\right](\xi) = i\int_{-\infty}^{\xi} \mF[\chi(k)W_{H}(k)](\eta)d\eta \, .
\end{align*}
Since $\int_{\R} \mF [g](\eta) \, d\eta = g(0)$, and since we know that $\chi (0)W_H(0)=0$, we can change the domain of integration to yield
\begin{align}\label{eq:FT_divk}
    \mF\left[\frac{\chi(k)W_{H}(k)}{k}\right](\xi) = -i\int_{\xi}^{\infty}\mF[\chi(k)W_{H}(k)](\eta)d\eta \, .
\end{align}
 Due to the strengthened hypothesis on the potential, $\chi(k)W_{H}(k)$ has Fourier transform in $L^1_1(\R)$ \cite[Lemma 5]{goldberg2004dispersive}, and so the {\em integral} in \eqref{eq:FT_divk} is in $L^1 (\R)$. Hence, $\chi(k)W_{H}(k)/\lambda(k)\in A(\R)$. To conclude this argument, since we have shown that both terms on the right hand side of \eqref{eq:WsOverLambda_decomp} are in $A_1 (\R)$, we now have that $W_{H}(k)/\lambda(k)\in A_1(\R)$ as well. Since we already showed it is  non-vanishing, we can once again use Wiener's lemma (Lemma \ref{lem:WienerA1}) to yield $\frac{\lambda(k)}{W_{H}(k)}\in A_1(\R)$. 
 
 For $\widetilde{W}_H(k)$, note that
\begin{align*}
    \widetilde{W_{H}}(k) = \varphi_-(0,k)\partial_x \varphi_+(0,-k)-\varphi_+(0,-k)\partial_x \varphi_-(0,k)\in A(\R) \, .
\end{align*}
Moreover, in the case that $W_{H}(0)=0$, we also have $\widetilde{W}_H(0)=-W_H(0)=0$ and hence
\begin{align*}
    \mF\left[\frac{\widetilde{W}_H(k)}{k}\right](\xi) =  i\int_{-\infty}^{\xi} \mF[\widetilde{W}_{H}(k)](\eta)d\eta = - i\int_{\xi}^{\infty}\mF\left[\widetilde{W}_H(k)\right](\eta)d\eta,
\end{align*}
from which $\frac{\widetilde{W}_H(k)}{k}\in A(\R)$ provided that $V\in L^1_2$; see \cite{goldberg2004dispersive} for more details. 

The final claim for the derivative of $W_H$ follows similarly from \eqref{eq:WHform} noting from \eqref{eq:DT_bounds} that $|\mF(\partial_k \varphi_{\pm}(0;k))(\xi)|\les |\xi|I(\xi)$ and $|\mF(\partial_k\partial_x \varphi_{\pm}(0;k))(\xi)|\les |\xi|(I(\xi)+|V(\xi)|)$.
\end{proof}

By definition, each Jost function has known asymptotics on one side of $\R$. On the other side (e.g., $x\to -\infty$ for $f_+$), the asymptotics are not a priori prescribed. Nevertheless, by the decomposition \eqref{eq:fpm_relation}, we obtain Wiener-algebra norm bounds, potentially non-uniform in~$x$.
 
\begin{lemma}\label{lem:badside} Assume that  $k=0$ is a regular point of the spectrum of $H$, i.e., $W_H(0)\neq 0$.  Let $\psi_\pm(x,k)\equiv\frac{k}{W_H(k)}\varphi_\pm(x,k)$. Then,  for $V\in L^1_1$ and  $\pm x<0$,
$$\|\psi_\pm(x,\cdot)\|_{\mF TV(\R)}\les  1\, .$$
For $V\in L^1_3$ and  $\pm x<0$ we have 
$$\|\varphi_\pm(x,\cdot)\|_{\mF TV(\R)} \les \la x\ra \, , \qquad  \
\|\partial_k\varphi_\pm(x,\cdot)\|_{\mF TV(\R)} \les \la x\ra^2 \, .
$$
In the resonant case, for $V\in L^1_2$ and  $\pm x<0$, we have 
$$\|\psi_\pm(x,\cdot)\|_{\mF TV(\R)}\les  1.$$
\end{lemma}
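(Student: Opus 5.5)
We would use the connection relation \eqref{eq:fpm_relation}, which re-expresses a Jost function on its "bad" half-line in terms of the other Jost function, whose modified version has uniform Wiener bounds there by \eqref{eq:DT_bounds}. Take $\psi_+$ with $x<0$; the case of $\psi_-$ with $x>0$ is symmetric. The analogous identity for $f_+$ reads
\begin{equation*}
f_+(x,k)=\alpha_-(k) f_-(x,k)+\beta(k) f_-(x,-k),\qquad \alpha_-(k)=\frac{\widetilde W_H(-k)}{2ik}\ \text{(up to sign)},\quad \beta(k)=\frac{W_H(k)}{-2ik}.
\end{equation*}
Multiplying by $e^{-ikx}$ gives
\begin{equation*}
\varphi_+(x,k)=\alpha_-(k)\, e^{-2ikx}\varphi_-(x,k)+\beta(k)\varphi_-(x,-k).
\end{equation*}
Multiplying by $k/W_H(k)$ then gives
\begin{equation*}
\psi_+(x,k)=c_1\,\frac{\widetilde W_H(\mp k)}{W_H(k)}\, e^{-2ikx}\varphi_-(x,k)+c_2\,\varphi_-(x,-k),
\end{equation*}
with $c_1,c_2$ absolute constants.

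The key steps are as follows.

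\begin{enumerate}[(i)]
\item By \eqref{eq:DT_bounds}, $\varphi_-(x,\pm k)\in A_1(\R)$ uniformly in $x\le 0$.
\item The factor $e^{-2ikx}$ has $\mF TV$ norm $1$, since its Fourier transform is a translated Dirac mass.
\item Write $\widetilde W_H/W_H=(\widetilde W_H/\lambda)\cdot(\lambda/W_H)$ and invoke Lemma \ref{lem:wronratio}. In the regular case ($V\in L^1_1$, general $\lambda$) it gives $\widetilde W_H/\lambda\in A(\R)$ and $\lambda/W_H\in A_1(\R)$. In the resonant case ($V\in L^1_2$) it gives the same with $\lambda=k$. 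Reflection $k\mapsto -k$ preserves these norms.
\end{enumerate}

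The algebra property $\|fg\|_{\mF TV}\le\|f\|_{\mF TV}\|g\|_{\mF TV}$ then yields $\|\psi_\pm(x,\cdot)\|_{\mF TV}\lesssim1$ uniformly on the bad half-line, in both cases.

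For the unnormalized bound on $\varphi_\pm$ in the regular case, we have to remove the factor $k/W_H$, i.e.\ control $\alpha_-$ and $\beta$ directly. These carry a $1/k$ singularity, and in the regular case $W_H(0)\ne0$ it is genuine. The singularity cancels in the combination, because $\alpha_-(k)+\beta(k)$ is bounded: $\varphi_+$ is continuous at $k=0$. The plan is to split by a cutoff $\chi(k)$.

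Away from zero, $(1-\chi)/k\in H^1\subset A(\R)$, so that piece is uniformly bounded by the above.

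Near zero, write
\begin{equation*}
\chi\bigl[\alpha_- e^{-2ikx}\varphi_-(x,k)+\beta\varphi_-(x,-k)\bigr]=\chi(\alpha_-+\beta)\varphi_-(x,-k)+\chi\alpha_-\bigl[e^{-2ikx}\varphi_-(x,k)-\varphi_-(x,-k)\bigr].
\end{equation*}
In the first term, $k(\alpha_-+\beta)$ is a linear combination of $\widetilde W_H(\mp k)$ and $W_H(k)$ that vanishes at $k=0$. Dividing by $k$ via the trick \eqref{eq:FT_divk} puts it in $A(\R)$ once its Fourier transform lies in $L^1_1$, which holds for $V\in L^1_2$ by \cite[Lemma 5]{goldberg2004dispersive}. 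In the second term, the bracket vanishes at $k=0$, and dividing by $k$ costs one $\xi$-moment of its Fourier transform. That transform is a sum of the translate $\widehat\varphi_-(x,\cdot)$ shifted by $2x$ and its reflection, so the moment costs $\langle x\rangle$ (from the shift by $2x$) plus $\int|\xi| I(\xi)\,d\xi\lesssim\|V\|_{L^1_2}$. This gives $\|\varphi_\pm(x,\cdot)\|_{\mF TV}\lesssim\langle x\rangle$.

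For $\partial_k\varphi_\pm$, differentiate the same representation. The derivative hitting $e^{-2ikx}$ produces another factor $x$. The derivative hitting the $\varphi_-$ factors or the Wronskians costs one more $\xi$-moment, hence one more power of $\langle x\rangle$ and one more weight on $V$. This uses $|\mF(\partial_k\varphi_\pm)|\lesssim|\xi|I(\xi)$ and the $\partial_kW_H/\lambda\in A_1(\R)$ statement of Lemma \ref{lem:wronratio}. We expect this to require $V\in L^1_3$ and give $\langle x\rangle^2$.

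The main obstacle is the near-zero cancellation in the unnormalized bounds. The division-by-$k$ step must be done with explicit $x$-dependence, tracking that the Fourier transform of $e^{-2ikx}\varphi_-(x,k)-\varphi_-(x,-k)$ has first moment $O(\langle x\rangle)$ uniformly. Concretely, we would bound $\int_\xi^\infty$ of a measure with total mass zero by moments as in \eqref{eq:FT_divk}. Since the Dirac mass at $2x$ has nonzero moment, this is exactly where the linear growth arises.
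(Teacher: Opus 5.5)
Your proposal is correct and follows essentially the paper's route. For $\psi_\pm$, both use the connection formula \eqref{eq:fpm_relation} with the $k/W_H$ normalization and Lemma~\ref{lem:wronratio}. For the unnormalized bounds, both use the cancellation $\widetilde W_H(0)=-W_H(0)$ and the fact that dividing by $k$ costs one Fourier moment, with the $\la x\ra$ growth coming from the phase $e^{\pm 2ikx}$. The only difference is cosmetic: you localize near $k=0$ with a cutoff and divide the whole bracket $e^{-2ikx}\varphi_-(x,k)-\varphi_-(x,-k)$ by $k$ at once. The paper instead adds and subtracts $\varphi_+(x,0)$ globally and isolates $(e^{2ikx}-1)/k$, whose Fourier transform is explicitly a multiple of the indicator of $[0,2x]$.
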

\begin{proof} WLOG we consider the $-$ case with $x>0$. Recall from \eqref{eq:fpm_relation} that we have 
$$ 
    f_{-}(x,k) = \alpha(k)f_{+}(x,k) + \beta(k)f_{+}(x,-k),
$$ 
where $\alpha(k) = \dfrac{\widetilde{W_{H}}(k)}{-2ik}$ and $\beta(k)=\dfrac{W_{H}(k)}{-2ik}$.
This implies
$$\psi_-(x,k)=\frac{ie^{2ikx}}2\frac{\widetilde{W}_H(k)}{W_{H}(k)}\varphi_{+}(x,k)+ \frac{i}2  \varphi_+(x,-k).$$
Since $\varphi_+ \in A_1 (\R)$ by \eqref{eq:DT_bounds}, then for the claims on $\psi_-$, it suffices to prove that 
$$\big\|\tfrac{\widetilde{W}_H(k)}{W_{H}(k)}\big\|_{A(\R)}=\big\|\tfrac{\widetilde{W}_H(k)}{\lambda(k)}\tfrac{\lambda(k)}{W_{H}(k)}\big\|_{A(\R)}\les 1 \, ,
$$ 
which follows immediately from Lemma ~\ref{lem:wronratio}.\\

Next, when $k=0$ is regular and $V\in L^1_3$, we write
\begin{multline*}\varphi_-(x,k)= \frac{ie^{2ikx}}2\frac{\widetilde{W}_H(k)}{k}\varphi_{+}(x,k)+ \frac{i}2  \frac{W_{H}(k)}{k} \varphi_+(x,-k) \\
=  \frac{i }2\frac{\widetilde{W}_H(k)+W_H(k)}{k}\varphi_{+}(x,-k) -\frac{i\widetilde W_{H}(k)}2 \  \frac{\varphi_+(x,-k)-\varphi_+(x,0)}{k} \\ + \frac{i \widetilde W_H(k)}2 \  \frac{ \varphi_{+}(x,k)-\varphi_+(x,0)}{k}  + \frac{i \widetilde W_H(k)}2  \varphi_{+}(x,k) \  \frac{e^{2ikx}-1}{k} .
\end{multline*}
The Fourier transform of $\dfrac{e^{2ikx}-1}{k}$ is a constant multiple of the indicator function on $[0,2x]$, which has $L^1$ norm proportional to $|x|$ and the Fourier transform of its $k$-derivative has $L^1$ norm proportional to $x^2$. Using arguments similar to those in Lemma~\ref{lem:wronratio}, each other term (and their $k$-derivative) has a Fourier transform with $L^1$ norm which is uniformly bounded in $x>0$. We require that $V\in L^1_3$ to handle the terms   $\partial_k\left(\frac{\widetilde{W}_H(k)+W_H(k)}{k}\right)$ and $\partial_k\left(\frac{ \varphi_{+}(x,\pm k)-\varphi_+(x,0)}{k}\right)$ using the identity \eqref{eq:FT_divk} and \cite[Lemma 5]{goldberg2004dispersive}.
\end{proof}

\subsection{Supersymmetric (SUSY) Hamiltonians}\label{sec:SUSY}
Here we briefly recall the theory of SUSY Schr{\"o}dinger operators, and introduce two such operators, $H$ and $\widetilde{H}$, from which in Secs.\ \ref{sec:1d_dirac_jost} and \ref{sec:1d_dirac_W}, we construct the scattering theory of the Dirac operator $\Dx$.

Suppose that $m\in W^{1,\infty}(\R)$ and define the pair of operators $A$ and $A^*$
\begin{align}\label{def:AA*}
    A = \partial_x + m(x) \, ,\qquad A^{*} = -\partial_x + m(x) \, .
\end{align}
Composing these operators in each order leads to the identity
\begin{align*}
    AA^{*} = H + \minf^2 \, ,\qquad A^{*}A = \widetilde{H}+\minf^2 \, .
\end{align*}
where $H,\widetilde{H}$ are a pair of Schr\"odinger operators 
\begin{subequations}\label{eq:1d_Schrodinger}
\begin{align}
    H &\equiv -\partial_x^2 + V (x)  \, , \qquad \widetilde{H}\equiv -\partial_x^2 +\widetilde V (x) \, ,\qquad  x\in\R\, , \qquad {\rm where}\\
    V (x)&\equiv  m^2(x)-\minf^2+ m'(x) \, ,\qquad   \widetilde V (x) \equiv  m^2(x)-\minf^2- m'(x) \,. 
\end{align}
\end{subequations}
The operators $H$, $\widetilde{H}$ are known as a supersymmetric pair. We collect some well-known facts about these operators, whose proofs can be found in ~\cite{ReedSimonIV, Thaller_Dirac}. 
\begin{lemma}\label{lem:SUSY_prop}
 Suppose $m^2(x)-\minf^2, m'(x)\in L^1_1(\R)$. Then we have the following properties of the spectra of the supersymmetric pair of Hamiltonians $H$ and $\widetilde{H}$.
\begin{enumerate}[i)]
    \item $\sigma(H)=\sigma(\widetilde{H})\setminus\{-\minf^2\}$.
    \item $-\minf^2\in\sigma(\widetilde{H})$ if and only if $\phi_0(x)=e^{-\int_0^{x} m(z)dz}\in L^2(\R)$. In this case 
    $$\widetilde{H}\phi_0 = -\minf^2\phi_0 \,.$$
    \item The essential spectrum of $H$ is $\sigma_{\rm ess}(H)=[0,\infty)$.
    \item $H$ has finitely many simple eigenvalues in the interval $(-\minf^2,0)$.  
\end{enumerate}
\end{lemma}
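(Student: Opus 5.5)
The plan is to use the factorization $AA^*=H+\minf^2$, $A^*A=\widetilde H+\minf^2$ together with the intertwining relations $A(A^*A)=(AA^*)A$ and $A^*(AA^*)=(A^*A)A^*$. Both $AA^*$ and $A^*A$ are nonnegative, since $\langle A^*Au,u\rangle=\|Au\|^2$. Hence $\sigma(H),\sigma(\widetilde H)\subset[-\minf^2,\infty)$.

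For (i), the standard SUSY argument will be used: $A^*A$ and $AA^*$ have the same spectrum away from zero. Concretely, $A^*A|_{(\ker A)^\perp}$ is unitarily equivalent to $AA^*|_{(\ker A^*)^\perp}$ via the partial isometry in the polar decomposition $A=U|A|$ (Deift's theorem). We then examine the kernels.
\begin{itemize}
\item $\ker A^*$ is spanned by solutions of $u'=mu$, i.e. $e^{\int_0^x m}$. Since $m\to\pm\minf$ as $x\to\pm\infty$ (this is what the hypothesis on $m^2-\minf^2$ and $m'$ encodes in the domain-wall setting), this function grows at both ends, so $\ker A^*=\{0\}$ and $-\minf^2\notin\sigma_p(H)$.
\item $-\minf^2$ also cannot lie in $\sigma_{\rm ess}(H)$, by (iii).
\end{itemize}
So $\sigma(H)=\sigma(\widetilde H)\setminus\{-\minf^2\}$, with the only possible discrepancy being the point $-\minf^2$, coming from $\ker A$.

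For (ii), $\ker A$ consists of solutions of $u'=-mu$, i.e. multiples of $\phi_0$. A direct computation gives $A^*A\phi_0=0$, i.e. $\widetilde H\phi_0=-\minf^2\phi_0$, whenever $\phi_0\in L^2$. Conversely, suppose $-\minf^2\in\sigma(\widetilde H)$. It is not in $\sigma_{\rm ess}(\widetilde H)=[0,\infty)$, so it is an eigenvalue, with eigenfunction $u$. Then $\|Au\|^2=0$, so $u\in\ker A$, which forces $u=c\phi_0\in L^2$.

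For (iii), $V=m^2-\minf^2+m'\in L^1_1$, so $V$ is relatively form-compact with respect to $-\partial_x^2$. Weyl's theorem then gives $\sigma_{\rm ess}(H)=[0,\infty)$, and the same holds for $\widetilde H$.

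For (iv), the key input is that for $V\in L^1_1$ the number of negative eigenvalues is bounded by Bargmann's bound $1+\int|x||V|$; in particular it is finite. Simplicity holds because in one dimension two $L^2$ eigenfunctions with the same eigenvalue have constant, hence zero, Wronskian (it vanishes at infinity), so they are linearly dependent. Eigenvalues lie in $(-\minf^2,0)$ because $H\ge-\minf^2$ and $-\minf^2$ is excluded by (i).

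The main obstacle I expect is the unitary-equivalence step in (i) for unbounded operators. This step requires care with domains: $A$ is closed on $H^1$ because $m\in W^{1,\infty}$. I would invoke the abstract result in \cite{Thaller_Dirac} rather than reprove it.
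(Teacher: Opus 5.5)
Your proposal is correct, and it is the standard argument the paper relies on: the paper gives no proof of its own and defers to \cite{ReedSimonIV, Thaller_Dirac}, which supply exactly the ingredients you use (Deift's commutation theorem for $A^*A$ and $AA^*$, Weyl's theorem, the one-dimensional Bargmann-type bound, and the Wronskian argument for simplicity). One precision: the hypothesis $m^2-\minf^2,\,m'\in L^1_1$ by itself only gives $|m(\pm\infty)|=\minf$, so the orientation $m(\pm\infty)=\pm\minf$ that you use to get $\ker A^*=\{0\}$ is the paper's standing domain-wall assumption, not a consequence of the hypothesis---and it is genuinely needed, since with the reversed orientation $e^{\int_0^x m}\in\ker A^*$ would put $-\minf^2$ in $\sigma(H)$ but not in $\sigma(\widetilde H)$, and (i) would fail.
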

\begin{rmk}
Note that we define $H$, $\widetilde{H}$ such that the potentials $V$, $\widetilde{V}$, are asymptotically zero, as opposed to the usual definitions $AA^*$ and $A^*A$ which are non-negative operators. 
\end{rmk}
Let $f_{\pm}(x,k)$ and $\widetilde{f}_{\pm}(x,k)$, for $k\neq 0$, be the Jost functions defined in \Cref{eq:HJost} for the Hamiltonians $H$ and $\widetilde{H}$, respectively. We have the intertwining relations
\begin{subequations}\label{eq:intertwining}
\begin{align}
    &A\widetilde{f}_{+} = (ik+\minf)f_{+} \, ,\qquad A^{*}f_{+} = (-ik+\minf)\widetilde{f}_{+}  \, ,\\
    &A\widetilde{f}_{-} = -(ik+\minf)f_{-} \, ,\qquad A^{*}f_{-} = -(-ik+\minf)\widetilde{f}_{-}  \, .
\end{align} 
\end{subequations}
The Wronskians determined by the Jost functions associated to each operator $H$ and $\widetilde{H}$ can also be related to one another.
\begin{lemma}\label{lem:Wronskian_identities}
We have the following relations between Wronskians for $H$ and $\widetilde H$: 
\begin{align*}
     W_{H}(k) =\frac{ik-\minf}{ik+\minf}W_{\widetilde{H}}(k) \, ,\qquad 
      \widetilde{W_{H}}(k)=-\widetilde{W_{\widetilde{H}}}(k) \,  .
\end{align*}
\end{lemma}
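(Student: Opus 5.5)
The plan is to derive both identities directly from the intertwining relations \eqref{eq:intertwining}, using one algebraic identity for the Wronskian of two functions of the form $Au$, $Av$. Throughout, set $E\equiv k^2+\minf^2$ and fix $k\neq 0$ real.

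\emph{Step 1: the key identity.} Suppose $u,v$ both solve $\widetilde H w=k^2 w$, that is, $A^*Aw=Ew$. I will show
\begin{equation*}
W[Au,Av]=E\,W[u,v].
\end{equation*}
Since $A^*A=-\partial_x^2+m^2-m'$, we have $u''=(m^2-m'-E)u$. Differentiating $Au=u'+mu$ gives
\begin{equation*}
(Au)'=u''+m'u+mu'=(m^2-E)u+mu'=m\,(Au)-Eu,
\end{equation*}
and likewise for $v$. Substituting into $W[Au,Av]=(Au)(Av)'-(Au)'(Av)$, the terms involving $m$ cancel. What remains is $E\big(u\,(Av)-v\,(Au)\big)=E\,(uv'-vu')=E\,W[u,v]$.

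\emph{Step 2: the first identity.} By \eqref{eq:intertwining},
\begin{equation*}
f_+=\frac{A\widetilde f_+}{ik+\minf},\qquad f_-=-\frac{A\widetilde f_-}{ik+\minf}.
\end{equation*}
The functions $\widetilde f_\pm(\cdot,k)$ solve $\widetilde H w=k^2w$, so Step 1 applies and gives
\begin{equation*}
W_H(k)=-\frac{E}{(ik+\minf)^2}\,W_{\widetilde H}(k).
\end{equation*}
Since $E=(\minf+ik)(\minf-ik)$, the prefactor is $-\frac{\minf-ik}{ik+\minf}=\frac{ik-\minf}{ik+\minf}$, which is the first claim.

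\emph{Step 3: the second identity.} Here I use $f_-(\cdot,k)=-A\widetilde f_-(\cdot,k)/(ik+\minf)$ together with the intertwining relation at $-k$, namely $f_+(\cdot,-k)=A\widetilde f_+(\cdot,-k)/(-ik+\minf)$. Both $\widetilde f_-(\cdot,k)$ and $\widetilde f_+(\cdot,-k)$ solve the same equation with the same $E$, so Step 1 gives
\begin{equation*}
\widetilde{W_H}(k)=-\frac{E}{(ik+\minf)(\minf-ik)}\,\widetilde{W_{\widetilde H}}(k)=-\widetilde{W_{\widetilde H}}(k).
\end{equation*}

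\emph{Main obstacle.} The algebra is routine; the only real content is justifying \eqref{eq:intertwining} itself. I take those relations as given from the paper. If they needed checking, one would note that $A\widetilde f_+$ solves $Hw=k^2w$ because $AA^*A=A(A^*A)$. One would then identify the constant by comparing asymptotics as $x\to+\infty$: there $m\to\minf$ and $\partial_x\widetilde f_+\sim ik\,e^{ikx}$, using the Deift–Trubowitz bounds \eqref{eq:DT_bounds}. This gives $A\widetilde f_+\sim(ik+\minf)e^{ikx}$, and by uniqueness of Jost solutions the relation follows. The case $x\to-\infty$ is analogous; there $m\to-\minf$, which explains the sign change in the relation for $f_-$.
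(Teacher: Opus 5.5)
Your proof is correct and follows the paper's argument. Both rest on the identity $W[Au,Av]=(k^2+m_\infty^2)\,W[u,v]$ for solutions of $A^*Aw=(k^2+m_\infty^2)w$, applied with the intertwining relations \eqref{eq:intertwining}: at $k$ for $W_H$, and at $k$ and $-k$ for $\widetilde{W_H}$. Your direct derivation through $(Au)'=m\,Au-Eu$ is slightly cleaner than the paper's displayed intermediate formula, which is labelled $W[Ah,Ag]$ but is actually the formula for $W[Ag,Ah]$; this is harmless, since the conclusion the paper uses is correct. You also write out the $\widetilde{W_H}$ case that the paper only calls ``analogous.''
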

\begin{proof}
First we state an identity related to the Wronskian. For any two functions $g$ and $h$,
\begin{align*}
    W[Ah,Ag] &= (A^{*}A g)(Ah) - (Ag)(A^{*}Ah) \, .
\end{align*}
If $g$ and $h$ satisfy $A^{*}Ag = (k^2+\minf^2) g$ (and the same for $h$), as do the functions $\widetilde{f}_{\pm}(x, k)$ and $\widetilde{f}_{\pm}(x, -k)$), then
\begin{align*}
    W[Ag,Ah] &= (k^2+\minf^2)W[g,h] \, .
\end{align*}
Thus
\begin{align*}
    W[f_{+}(\cdot,k),f_{-}(\cdot,k)] &= \frac{1}{-(ik+\minf)^2}W[A\widetilde{f}_{+}(\cdot,k),A\widetilde{f}_{-}(\cdot,k)]\\
    &=\frac{(k^2+\minf^2)}{-(ik+\minf)^2}W_{\widetilde{H}}(k) = \frac{ik-\minf}{ik+\minf}W_{\widetilde{H}}(k) , 
\end{align*}
and analogously we derive the identity for $\widetilde{W_{H}}(k)$.
 
\end{proof}

\section{Dispersive Estimates for one-dimensional Dirac operators with bounded domain wall}\label{sec:1d_estimates}
In this section we study the one-dimensional Dirac operator $\Dx$, given in \eqref{eq:1d_dirac}. As noted in the introduction, the techniques developed here are useful to approach the two-dimensional settings (Sec.\ \ref{sec:2d_bounded_estimates}). At the same time, estimates on the operator $\Dx$ are interesting in their own rights, and indeed were studied for the specialized case of  $m(x)={\rm sgn}(x)$ in \cite{kraisler2024dispersive}.

We begin by developing the scattering theory associated to the operator in Secs.\ \ref{sec:1d_dirac_jost} and \ref{sec:1d_dirac_W}. This theory is primarily built on top of the scattering theory of the SUSY Hamiltonians $H$ and $\widetilde{H}$ in Sec.\ \ref{sec:SUSY}. Using the Dirac scattering theory, in Sec.\ \ref{sec:1d_resolvent} we derive an expression for the continuous part of the spectral measure and the propagator. Finally, the proofs of dispersive estimates, Theorems \ref{thm:1d_unweighted} and \ref{thm:1dWeighted}, are provided in Sec.\ \ref{sec:1d_proofs}.

We consider the Dirac operator in the presence of a domain wall 
\begin{align*}
    \Dx = -i\sigma_3\partial_x + m(x)\sigma_1,
\end{align*}
where $m(x)\in C^{1}(\R)$ satisfies the asymptotics $
    \lim_{x\to\pm\infty}m(x)=\pm \minf$.

\begin{hyp}\label{hyp:domain_wall}
We assume that the functions $m^2(x)-\minf^2, m'(x)\in L^1_1(\R)$.
\end{hyp}
We cite some facts about the spectrum of the operator \eqref{eq:1d_dirac} found in \cite[Theorem 2.1]{Watson_2020}.
\begin{theorem}
The $L^2(\R)$ spectrum of the operator $\Dx$ comprises three pieces.
\begin{enumerate}[i)]
    \item Essential spectrum $(-\infty,-\minf]\cup[\minf,\infty)$.
    \item A simple eigenvalue at $0$ with corresponding eigenfunction 
    \begin{align*}
    \alpha_0(x) = \frac{1}{\sqrt{2}} \binom{1}{i} \exp{\left(-\int_0^x m(s)ds\right)}\ .
\end{align*}
    \item A finite set of additional simple eigenvalues in the gap $(-\minf,\minf)$.
\end{enumerate}
\end{theorem}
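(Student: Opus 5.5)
The plan is to obtain all three parts of the statement from the supersymmetric factorization of $\Dx^2$ and Lemma~\ref{lem:SUSY_prop}. Writing $\Dx=-i\sigma_3\partial_x+m\sigma_1$, the off-diagonal structure becomes transparent after conjugating by a fixed unitary that diagonalizes $\sigma_1$ (or directly in the basis $(1,\pm i)/\sqrt2$). In that basis $\Dx$ becomes $\begin{pmatrix}0 & A\\ A^* & 0\end{pmatrix}$ up to constants, with $A,A^*$ as in \eqref{def:AA*}. Consequently
\[
\Dx^2\simeq \begin{pmatrix}AA^* & 0\\ 0 & A^*A\end{pmatrix}=\begin{pmatrix}H+\minf^2 & 0\\ 0& \widetilde H+\minf^2\end{pmatrix}.
\]
I first verify this identity by direct computation, using $\sigma_3\sigma_1=-\sigma_1\sigma_3$, so that the cross terms produce exactly $\pm m'$.

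For the essential spectrum (part i), I argue as follows. By Lemma~\ref{lem:SUSY_prop}(iii), applied to both $H$ and $\widetilde H$ (the hypotheses hold by Hypothesis~\ref{hyp:domain_wall}), we have $\sigma_{\rm ess}(\Dx^2)=[\minf^2,\infty)$. The spectral mapping theorem for the self-adjoint $\Dx$ then gives $\sigma_{\rm ess}(\Dx)\subset(-\infty,-\minf]\cup[\minf,\infty)$. For the reverse inclusion, and to see that both signs occur, I use the chiral symmetry: some Pauli matrix anticommuting with $\Dx$ (here $\sigma_2$ anticommutes with both $\sigma_3$ and $\sigma_1$) makes $\sigma(\Dx)$ symmetric under $\lambda\mapsto-\lambda$. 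Alternatively, Weyl sequences supported near $\pm\infty$, where $\Dx$ is close to the constant-mass operator, give every $|\lambda|\ge\minf$ directly.

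For the zero mode (part ii), a kernel vector satisfies $A^*u=0$ or $Au=0$ in the respective component. The equation $Au=0$ gives $u=e^{-\int_0^x m}$, which is in $L^2$ since $m\to\pm\minf$. The other equation gives $e^{+\int m}$, which is not in $L^2$. This matches Lemma~\ref{lem:SUSY_prop}(ii), so the kernel is one-dimensional. Translating back to the original basis yields $\alpha_0=\frac1{\sqrt2}(1,i)^T e^{-\int_0^x m}$, which I check directly by applying $\Dx$.

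For part iii, the nonzero eigenvalues $\lambda$ with $|\lambda|<\minf$ satisfy $\lambda^2-\minf^2\in\sigma_{\rm p}(H)\cap(-\minf^2,0)$. Lemma~\ref{lem:SUSY_prop}(iv) makes this set finite. The intertwining $A^*:\ker(H-\mu)\to\ker(\widetilde H-\mu)$ builds the spinor $(Au,\pm\sqrt{\mu+\minf^2}\,u)$, up to ordering, for each sign of $\lambda$. Simplicity comes from the simplicity of the eigenvalues of $H$ and $\widetilde H$: each eigenvector of $\Dx$ is determined by one of its components, because the other component equals $\lambda^{-1}$ times $A$ or $A^*$ applied to it.

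The main obstacle is bookkeeping rather than analysis. I need to get the conjugation exactly right so that the diagonal blocks are $AA^*$ and $A^*A$ with the correct sign of $m'$, since this determines which component carries $e^{-\int m}$ and hence the precise form of $\alpha_0$.
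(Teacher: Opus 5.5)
The one step that fails is the last one in part (ii). Translating back to the original basis and ``checking directly by applying $\Dx$'' does not produce the displayed $\alpha_0$. With $S=\frac1{\sqrt2}\begin{pmatrix}1&1\\ i&-i\end{pmatrix}$ one has $S^*\Dx S=\begin{pmatrix}0&-iA\\ iA^*&0\end{pmatrix}$; this is \eqref{eq:SDS}. So the $L^2$ kernel is $S(0,\phi_0)^\top=\frac1{\sqrt2}(\phi_0,-i\phi_0)^\top$ with $\phi_0=e^{-\int_0^x m}$, i.e.\ it comes from the \emph{second} column of $S$. Indeed, using $\phi_0'=-m\phi_0$,
\[
\Dx\begin{pmatrix}\phi_0\\ i\phi_0\end{pmatrix}=\begin{pmatrix}-i\phi_0'+im\phi_0\\ m\phi_0-\phi_0'\end{pmatrix}=2m\phi_0\begin{pmatrix}i\\ 1\end{pmatrix}\neq 0,
\qquad
\Dx\begin{pmatrix}\phi_0\\ -i\phi_0\end{pmatrix}=0 .
\]
Hence, with the paper's convention $\Dx=-i\sigma_3\partial_x+m\sigma_1$, the formula in (ii) has a sign error. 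The vector $\binom{1}{i}e^{-\int_0^x m}$ is the zero mode of $+i\sigma_3\partial_x+m\sigma_1$. The correct eigenfunction here is $\frac1{\sqrt2}\binom{1}{-i}e^{-\int_0^x m}$. You named exactly this sign bookkeeping as the main obstacle, but then asserted agreement with the target instead of computing it. Carried out properly, your argument proves the corrected formula, and the discrepancy with the statement should be reported.

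Apart from this, your proof is correct and is the natural one. The paper does not prove this theorem; it cites \cite[Theorem 2.1]{Watson_2020}. But the ingredients it uses later, \eqref{eq:SDS} and Lemma~\ref{lem:SUSY_prop}, are precisely yours. Your use of the chiral symmetry $\sigma_2\Dx\sigma_2=-\Dx$ to get both halves of the essential spectrum from $\sigma_{\rm ess}(\Dx^2)=[\minf^2,\infty)$ is fine, as is the one-dimensionality of the kernel.

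There are two small slips. First, the unitary that makes $\Dx$ off-diagonal diagonalizes $\sigma_2$, not $\sigma_1$; diagonalizing $\sigma_1$ yields $\begin{pmatrix}m&-i\partial_x\\ -i\partial_x&-m\end{pmatrix}$, though your parenthetical basis $(1,\pm i)/\sqrt2$ is the right one. Second, in (iii), for $u\in\ker(H-\mu)$ and $\lambda=\pm\sqrt{\mu+\minf^2}$, the eigenvector is $S\bigl(u,\tfrac{i}{\lambda}A^*u\bigr)^\top$. Your $(Au,\pm\sqrt{\mu+\minf^2}\,u)$ is off by the choice of $A$ versus $A^*$ and by a factor of $i$. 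Neither slip affects finiteness or simplicity, which you argue correctly.
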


\subsection{One-dimensional Dirac Jost functions}\label{sec:1d_dirac_jost}
For each $k\in\R$, $k\neq 0$, let $f_{\pm}(x,k)$ and $\widetilde{f}_{\pm}(x,k)$ be the Jost functions associated to $H$ and $\widetilde{H}$, respectively. Then the Dirac Jost functions $g_{\pm}(x,k)$ are defined
\begin{align}\label{eq:1d_gpm_def}
    g_{\pm}(x,k) \equiv  \frac{1}{2}(\Dx+\nu(k) )S\begin{pmatrix}
        f_{\pm}(x,k) \\ \widetilde{f}_{\pm}(x,k)
    \end{pmatrix}  \, , \qquad {\rm where}~~~ S \equiv  \frac{1}{\sqrt{2}}\begin{pmatrix}
    1 & 1\\
    i & -i
\end{pmatrix} \, ,
\end{align}
and
\begin{align*}
    \nu(k) =\sqrt{k^2+\minf^2}.
\end{align*}
We show that, indeed, $g_{\pm}$ are the Jost solutions of the Dirac Hamiltonian $\Dx$, in the sense that they satisfy analogous conditions to the Schr{\"o}dinger Jost solutions
\begin{theorem}
The functions $g_{\pm}(x,k)$ are the Jost functions for the positive part of the spectrum of $\Dx$ in the sense that
\begin{align}\label{eq:DJost_evalue}
    (\Dx-\nu(k) )g_{\pm} = 0,
\end{align}
along with the asymptotics
\begin{align}\label{eq:DJost_asympt}
    g_{\pm}(x,k) &\sim \frac{1}{\sqrt{2}}\begin{pmatrix}
        \nu(k)\pm k\\
        \pm\minf
    \end{pmatrix}e^{\pm ikx},\qquad x\to \pm\infty.
\end{align}
\end{theorem}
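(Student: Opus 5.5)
The plan is to verify the eigenvalue equation \eqref{eq:DJost_evalue} algebraically, by reducing it to the Schr\"odinger equations \eqref{eq:HJost} for $H$ and $\widetilde H$ through the supersymmetric structure of $\Dx^2$. The asymptotics \eqref{eq:DJost_asympt} will then be read off from the asymptotics of $f_\pm,\widetilde f_\pm$ and their $x$-derivatives.

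\emph{Step 1: the square of $\Dx$.} Since $(\Dx-\nu)g_\pm=\tfrac12(\Dx^2-\nu^2)S(f_\pm,\widetilde f_\pm)^T$, it suffices to compute $\Dx^2$. Expanding $(-i\sigma_3\partial_x+m\sigma_1)^2$ and using $\sigma_1\sigma_3=-\sigma_3\sigma_1$ and $\sigma_3\sigma_1=i\sigma_2$, the first-order cross terms cancel. Only the multiplication term survives, giving
\begin{equation*}
\Dx^2=-\partial_x^2+m^2(x)+m'(x)\sigma_2 .
\end{equation*}

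\emph{Step 2: diagonalization by $S$.} The columns of $S$ are $\tfrac1{\sqrt2}(1,i)^T$ and $\tfrac1{\sqrt2}(1,-i)^T$. These are eigenvectors of $\sigma_2$ with eigenvalues $+1$ and $-1$, respectively. Since $S$ is constant,
\begin{equation*}
\Dx^2 S=S\,\mathrm{diag}\big(-\partial_x^2+m^2+m',\,-\partial_x^2+m^2-m'\big)=S\,\mathrm{diag}(H,\widetilde H)+\minf^2 S,
\end{equation*}
by \eqref{eq:1d_Schrodinger}. Because $\nu^2-\minf^2=k^2$, we obtain
\begin{equation*}
(\Dx^2-\nu^2)S\binom{f_\pm}{\widetilde f_\pm}=S\binom{(H-k^2)f_\pm}{(\widetilde H-k^2)\widetilde f_\pm}=0 .
\end{equation*}
This proves \eqref{eq:DJost_evalue}.

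\emph{Step 3: asymptotics.} Take the $+$ case as $x\to+\infty$; the other case is symmetric. Write $f_+=e^{ikx}\varphi_+$ and $\partial_xf_+=e^{ikx}(ik\varphi_++\partial_x\varphi_+)$, and similarly for $\widetilde f_+$. The Deift--Trubowitz bounds \eqref{eq:DT_bounds}, or the standard Volterra integral equation, give $\varphi_+(x,k)\to1$ and $\partial_x\varphi_+(x,k)\to0$ as $x\to+\infty$ for fixed $k\neq0$. Hence $S(f_+,\widetilde f_+)^T=\sqrt2(1,0)^Te^{ikx}+o(1)$, and its $x$-derivative is $\sqrt2(ik,0)^Te^{ikx}+o(1)$. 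Applying $\tfrac12(-i\sigma_3\partial_x+m\sigma_1+\nu)$ and using $m(x)\to\minf$, the three terms contribute $\sqrt2(k,0)^T$, $\sqrt2(0,\minf)^T$ and $\sqrt2(\nu,0)^T$ respectively, each times $e^{ikx}$. Summing and multiplying by $\tfrac12$ gives $\tfrac1{\sqrt2}(\nu+k,\minf)^Te^{ikx}$. As $x\to-\infty$, the same computation with $e^{-ikx}$ and $m\to-\minf$ yields $\tfrac1{\sqrt2}(\nu-k,-\minf)^Te^{-ikx}$.

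The only delicate point is Step 3. The Dirac Jost function involves $\partial_x f_\pm$, so one needs the derivative asymptotics $\partial_x\varphi_\pm\to0$, and not merely $\varphi_\pm\to1$. These follow from the integrability of $V$ and $\widetilde V$, which holds under Hypothesis~\ref{hyp:domain_wall}.
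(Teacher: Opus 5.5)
Your proof is correct. For the eigenvalue equation you and the paper do the same thing. Both reduce $(\Dx-\nu)g_\pm=\tfrac12(\Dx^2-\nu^2)S(f_\pm,\widetilde f_\pm)^\top$ to the Schr\"odinger equations via $S^*\Dx^2S=\mathrm{diag}(H+\minf^2,\widetilde H+\minf^2)$. The paper gets this from the off-diagonal form $S^*\Dx S=\bigl(\begin{smallmatrix}0&-iA\\ iA^*&0\end{smallmatrix}\bigr)$, while you get it from $\Dx^2=-\partial_x^2+m^2+m'\sigma_2$ and the eigenvectors of $\sigma_2$; the two are equivalent.

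The routes differ for the asymptotics. The paper writes $(\Dx+\nu)S=S\bigl(\begin{smallmatrix}\nu&-iA\\ iA^*&\nu\end{smallmatrix}\bigr)$ and uses the intertwining relations \eqref{eq:intertwining} to remove every derivative. This gives the exact identity $g_\pm=\tfrac12 S\bigl((\nu\pm k\mp i\minf)f_\pm,\,(\nu\pm k\pm i\minf)\widetilde f_\pm\bigr)^\top$, and the limits then follow from $f_\pm,\widetilde f_\pm\sim e^{\pm ikx}$ alone. You instead apply $\Dx+\nu$ directly, using $\partial_x\varphi_\pm\to0$ and $m\to\pm\minf$.

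Your argument is more self-contained. The paper quotes \eqref{eq:intertwining} without proof, and checking, e.g., $A\widetilde f_+=(ik+\minf)f_+$ needs precisely the derivative asymptotics you verify, together with uniqueness of Jost solutions. What the paper's route buys is an exact, derivative-free representation of $g_\pm$ as explicit $k$-dependent coefficients times $f_\pm,\widetilde f_\pm$. That representation is reused in the Wronskian identity of Theorem~\ref{thm:1d_wronskians} and in the Wiener-algebra bounds in the proof of Theorem~\ref{thm:1d_unweighted}; your computation yields only the limit.

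One small correction to your justification. The bounds \eqref{eq:DT_bounds} as displayed are uniform in $x\ge0$ and do not by themselves give $\partial_x\varphi_+(x,k)\to0$. The Volterra identity $\partial_x\varphi_+(x,k)=-\int_x^\infty e^{2ik(t-x)}V(t)\varphi_+(t,k)\,dt$, with $\varphi_+(\cdot,k)$ bounded for fixed real $k\neq0$, does give it, so that is the alternative to rely on.
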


\begin{rmk}
One can similarly define Jost functions, $h_{\pm}(x,k)$, for $k\neq 0$, associated to the negative part of the spectrum of $\Dx$ by the formula
\begin{align*}
    h_{\pm}(x,k) = \frac{1}{2}(\Dx-\nu(k))S\begin{pmatrix}
        f_{\pm}(x,k) \\ \widetilde{f}_{\pm}(x,k)
    \end{pmatrix} .
\end{align*}
\end{rmk}
\begin{proof}
Using the matrix $S$ defined above, we have
\begin{align}\label{eq:SDS}
    S^*\Dx S = \begin{pmatrix}
        0 & -iA\\
        iA^{*} & 0
    \end{pmatrix},\qquad S^* \Dx ^2 S &=\begin{pmatrix}
        AA^{*} & 0\\
        0 & A^{*}A
    \end{pmatrix}.
\end{align}
From this, it is straightforward to see that $g_{\pm}$ solve the necessary equation as
\begin{align*}
    S^*(\Dx-\nu)g_{\pm} &= \frac{1}{2}S^*(\Dx ^2 - \nu(k) ^2) S\begin{pmatrix}
        f_{\pm}(x,k)\\
        \widetilde{f}_{\pm}(x,k)
    \end{pmatrix}\\
    &= \frac{1}{2}\begin{pmatrix}
        H-k^2 & 0\\
        0 & \widetilde{H}-k^2
    \end{pmatrix}\begin{pmatrix}
        f_{\pm}(x,k)\\
        \widetilde{f}_{\pm}(x,k)
    \end{pmatrix}= 0.
\end{align*}
Moreover $S^*$ is invertible and so, necessarily, $(\Dx-\nu)g_{\pm}=0$. Now we prove the asymptotics. By the identity for $S^* \Dx S$, \eqref{eq:SDS}, we have
\begin{align*}
    (\Dx+\nu(k) )S &= S\begin{pmatrix}
        \nu(k)  & -iA \\
        iA^{*} & \nu(k) 
    \end{pmatrix} ,
\end{align*}
from which the formula for $g_{\pm}(x,k)$, \eqref{eq:1d_gpm_def}, can be recast as
\begin{align*}
    g_{\pm}(x,k) &= \frac{1}{2}(\Dx+\nu(k) )S\begin{pmatrix}
        \widetilde{f}_{\pm}(x,k)\\ f_{\pm}(x,k)
    \end{pmatrix}\\
    &= \frac{1}{2}S\begin{pmatrix}
        \nu(k)  & -iA \\
        iA^{*} & \nu(k) 
    \end{pmatrix}\begin{pmatrix}
        f_{\pm}(x,k)\\ \widetilde{f}_{\pm}(x,k)
    \end{pmatrix}\\
    &=\frac{1}{2}S\begin{pmatrix}
        \nu(k) f_{\pm}(x,k) -iA\widetilde{f}_{\pm}(x,k)\\
        iA^{*} f_{\pm}(x,k) + \nu(k)  \widetilde{f}_{\pm}(x,k) \, .
    \end{pmatrix}
\end{align*}
Applying the intertwining relations \eqref{eq:intertwining} to $g_{\pm}$, we have
\begin{align*}
    g_{\pm}(x,k) = \frac{1}{2}S\begin{pmatrix}
        (\nu(k) \pm k \mp i\minf)f_{\pm}(x,k)\\
        (\nu(k) \pm k \pm i\minf)\widetilde{f}_{\pm}(x,k)
    \end{pmatrix}\, .
\end{align*}
This form makes the asymptotics clear as
\begin{align*}
    \lim_{x\to+\infty}g_{+}(x,k) e^{-ikx} &=\lim_{x\to+\infty}\frac{1}{2}S\begin{pmatrix}
        (\nu(k) +k-i\minf)f_{+}(x,k)\\
        (\nu(k) +k+i\minf)\widetilde{f}_{+}(x,k)
    \end{pmatrix}e^{-ikx}\\
    &=\frac{1}{2}S\begin{pmatrix}
        (\nu(k) +k-i\minf)\\
        (\nu(k) +k+i\minf)
    \end{pmatrix}  =\frac{1}{\sqrt{2}}\begin{pmatrix}
        \nu(k) +k\\
        \minf
    \end{pmatrix} .
\end{align*}
The asymptotics for $g_{-}(x,k)$ can be similarly checked.
\end{proof}

\subsection{Computation of the Dirac Wronskian}\label{sec:1d_dirac_W}
Recall the definition of the Schr\"odinger Wronskian  $W_{H}(k)$: 
\begin{align*}
    &W_{H}(k) = W[f_{+}(\cdot,k),f_{-}(\cdot,k)] \, . 
\end{align*}
Now we define the corresponding Dirac Wronskian  $W_{\Dx}(k)$ associated to the Jost solutions of the Dirac operator $\Dx$:
\begin{align*}
      &W_{\Dx}(k) \equiv  \det[g_{+}(\cdot,k),g_{-}(\cdot,k)].
\end{align*}
\begin{theorem}\label{thm:1d_wronskians}
We have the following relation between the Schr\"odinger and Dirac Wronskians:
\begin{align*}
    W_{\Dx}(k) =\frac{\minf\nu(k) }{2(ik-\minf)}W_{H}(k) \, .
\end{align*}
\end{theorem}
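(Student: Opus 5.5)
The plan is to compute the determinant directly from the representation of the Dirac Jost functions obtained in the proof of the previous theorem:
\[
g_\pm(x,k)=\tfrac12 S\begin{pmatrix} a_\pm f_\pm(x,k)\\ b_\pm \widetilde f_\pm(x,k)\end{pmatrix},\qquad a_\pm=\nu\pm k\mp i\minf,\quad b_\pm=\nu\pm k\pm i\minf .
\]
Since $\det[g_+,g_-]=\tfrac14\det S\cdot\det\begin{pmatrix} a_+f_+ & a_-f_-\\ b_+\widetilde f_+ & b_-\widetilde f_-\end{pmatrix}$ and $\det S=-i$, we get
\[
W_{\Dx}(k)=-\tfrac{i}{4}\bigl(a_+b_-\,f_+\widetilde f_- - a_-b_+\,f_-\widetilde f_+\bigr).
\]
This expression is independent of $x$, since it is the determinant of two solutions of a trace-free first-order system.

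Next, I would eliminate the $\widetilde f_\pm$ using the intertwining relations \eqref{eq:intertwining}. These give
\[
\widetilde f_+=\frac{A^*f_+}{\minf-ik}=\frac{-f_+'+mf_+}{\minf-ik},\qquad \widetilde f_-=-\frac{-f_-'+mf_-}{\minf-ik}.
\]
Substituting, the bracket becomes
\[
\frac{1}{\minf-ik}\Bigl(a_+b_-\,f_+f_-' + a_-b_+\,f_-f_+' - (a_+b_-+a_-b_+)\,m(x)f_+f_-\Bigr).
\]

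The key algebraic point, and the only place where something could go wrong, is handling the non-Wronskian term $m(x)f_+f_-$. Using $\nu^2=k^2+\minf^2$, one finds
\[
a_+b_-=(\nu-i\minf)^2-k^2=-2i\minf\nu,\qquad a_-b_+=(\nu+i\minf)^2-k^2=2i\minf\nu .
\]
Hence their sum vanishes, and the $m(x)$-dependent term drops out, consistent with $x$-independence. What remains is
\[
-2i\minf\nu\,(f_+f_-'-f_+'f_-)=-2i\minf\nu\,W_H(k).
\]

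Therefore
\[
W_{\Dx}(k)=-\tfrac i4\cdot\frac{-2i\minf\nu W_H(k)}{\minf-ik}=\frac{\minf\nu(k)}{2(ik-\minf)}W_H(k),
\]
as claimed. The main care needed is tracking signs in \eqref{eq:intertwining} and in $\det S$.
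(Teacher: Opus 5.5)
Your proposal is correct and follows essentially the same route as the paper. You expand $\det[g_+,g_-]$ using the factored form $g_\pm=\tfrac12 S(\cdot)$ with $\det S=-i$, reduce the coefficient products to $\mp 2i\minf\nu$ via $\nu^2=k^2+\minf^2$, and then eliminate $\widetilde f_\pm$ with the intertwining relations \eqref{eq:intertwining}, so that the $m(x)f_+f_-$ terms cancel and only $W_H(k)$ remains.
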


\begin{proof}
The proof is a direct calculation using the intertwining relations \eqref{eq:intertwining} and the definition \eqref{def:AA*} of $A^*$.
\begin{align*}
    W_{\Dx}(k)&=\frac{\det(S)}{4}\left[(\nu(k) +k-i\minf)(\nu(k) -k-i\minf)f_{+}(\cdot,k)\widetilde{f}_{-}(\cdot,k) \right.\\
    &\left.- (\nu(k) -k+i\minf)(\nu(k) +k+i\minf)f_{-}(\cdot,k)\widetilde{f}_{+}(\cdot,k) \right ]\\ 
    &= -\frac{\minf\nu(k) }{2}\left( f_{+}(\cdot,k)\widetilde{f}_{-}(\cdot,k)+f_{-}(\cdot,k)\widetilde{f}_{+}(\cdot,k)  \right)\\
    &= \frac{\minf\nu(k) }{2}\left(f_{+}(\cdot,k)\frac{A^{*}f_{-}(\cdot,k)}{-ik+\minf}- f_{-}(\cdot,k)\frac{A^{*}f_{+}(\cdot,k)}{-ik+\minf} \right)\\
    &= \frac{\minf\nu(k) }{2(ik-\minf)}\left(f_{+}(\cdot,k)f_{-}'(\cdot,k) -f_{-}(\cdot,k)f_{+}'(\cdot,k) \right)= \frac{\minf\nu(k) }{2(ik-\minf)}W_{H}(k)\, .
\end{align*}
\end{proof}

\subsection{The Resolvent and Time Evolution Operator}\label{sec:1d_resolvent}
For $z\notin \sigma(\Dx)$, define the resolvent $R_{\Dx}(z)=(\Dx-z)^{-1}$. Let $z=\nu(k)$ with $\im(k)>0$. The definition of the Jost functions $g_{\pm}(x,k)$, by \eqref{eq:DJost_evalue}, can be extended to $k\in\C$ with $\im(k)>0$ in a natural way. Then $g_{+}(x,k)$ decays exponentially as $x\to+\infty$ and $g_{-}(x,k)$ decays exponentially as $x\to-\infty$, by the asymptotics \eqref{eq:DJost_asympt}. The function $R_{\Dx}(\nu(k))(x,y)$ satisfies the equation
\begin{align}\label{eq:Rdx_kernelDef}
    (\Dx-\nu(k))R_{\Dx} = \delta(x-y)\, .
\end{align}
To determine the kernel, we use the ODE method of undetermined coefficients: by the decay conditions at $\pm \infty$,  the resolvent kernel must have the following form
\begin{align*}
    R_{\Dx}(\nu(k))(x,y) = \begin{cases}
        g_{+}(x,y)a(y,k)^{\top} & x > y \, ,\\
        g_{-}(x,y)b(y,k)^{\top} & x < y \, ,
    \end{cases}
\end{align*}
for some undetermined functions $a,b$. Integrating the equation above satisfied by $R_{\Dx}(z)(x,y)$, \eqref{eq:Rdx_kernelDef}, along the line $x=y$ yields that these coefficients are in fact
\begin{align*}
    R_{\Dx}(\nu(k))(x,y) = \frac{i}{W_{\Dx}(k)}\begin{cases}
        g_{+}(x,k)g_{-}(y,k)^{\top}\sigma_1 & x > y \, , \\
        g_{-}(x,k)g_{+}(y,k)^{\top}\sigma_1 & x < y \, ,
    \end{cases}
\end{align*}
where $W_{\Dx}(k)=\det[g_{+}(\cdot,k),g_{-}(\cdot,k)]$. Note that this determinant is independent of $x$ by Abel's theorem (concerning the evolution of the Wronskian in $x$ for systems of ODEs \cite{coddington1955theory}). The limiting absorption principle as $\im(k)\to 0$ follows from the identity
\begin{align*}
    R_{\Dx}(z) = (\Dx+z)S\begin{pmatrix}
        (H+\minf^2 - z^2)^{-1} & 0\\
        0 & (\widetilde{H}+\minf^2 - z^2)^{-1}
    \end{pmatrix}S^* \, ,
\end{align*}
and the corresponding limiting absorption principle for Schr\"odinger operators. Let $R_{\pm}(k)(x,y) = R_{\Dx}(\nu(k\pm i0))$. These limiting Dirac resolvents satisfy, for all $k\in \R\setminus\{0\}$,
\begin{align}\label{eq:ResKerR}
    R_{\pm}(k)(x,y) = \frac{i}{W_{\Dx}(\pm k)}\begin{cases}
        g_{+}(x,\pm k)g_{-}(y,\pm k)^{\top}\sigma_1 & x > y \, ,\\
        g_{-}(x,\pm k)g_{+}(y,\pm k)^{\top}\sigma_1 & x < y \, .
    \end{cases} 
\end{align}
Let $P_{\rm c}^+(\Dx)$ be the projection onto the positive part of the continuous spectrum of $\Dx$. Then by Stone's formula for $\gamma > 0$,
\begin{align*}
    \left[e^{-it\Dx}\langle \Dx\rangle^{-\gamma}P_c^+(\Dx)\right](x,y) = \frac{1}{2\pi i}\int_{\minf}^{\infty} e^{-itz}\langle z\rangle ^{-\gamma}\left[R_{\Dx}(z +i0)-R_{\Dx}(z -i0)\right](x,y)dz.
\end{align*}
Making the change of variables $z = \nu(k)$ we arrive at the formula
\begin{align*}
    \left[e^{-it\Dx}\langle \Dx\rangle^{-\gamma}P_c^+(\Dx)\right](x,y) = \frac{1}{2\pi i}\int_{0}^{\infty} e^{-it\nu(k)}\left[R_{+}(k)-R_{-}(k)\right](x,y)\frac{k dk}{\nu(k)^{\gamma+1}},
\end{align*}
where we have harmlessly replaced $\langle \nu(k)\rangle$ by $\nu(k)$. Using the formulas for $R_{\pm}(k)$ in \eqref{eq:ResKerR}, and making the change $k\to-k$ in the second integral, we observe that the propagator kernel can be expressed in terms of the Jost functions $g_{\pm}(x,k)$ by
\begin{align*}
    \left[e^{-it\Dx}\langle \Dx\rangle^{-\gamma}P_c^+(\Dx)\right](x,y) = \frac{1}{2\pi}\int_{\R}e^{-it\nu(k)}\frac{g_{+}(x,k)g_{-}(y,k)^{\top}\sigma_1}{W_{\Dx}(k)}\frac{k dk}{\nu(k)^{\gamma+1}},\quad x > y ,\\
    \left[e^{-it\Dx}\langle \Dx\rangle^{-\gamma}P_c^+(\Dx)\right](x,y) = \frac{1}{2\pi}\int_{\R}e^{-it\nu(k)}\frac{g_{-}(x,k)g_{+}(y,k)^{\top}\sigma_1}{W_{\Dx}(k)}\frac{k dk}{\nu(k)^{\gamma+1}},\quad x < y .
\end{align*}

\subsection{Proofs of Main Theorems}\label{sec:1d_proofs}
In this section we state and prove our main results concerning the one-dimensional Dirac operator $\Dx$, see \eqref{eq:1d_dirac}.

\begin{theorem}\label{thm:1d_unweighted}
Assume that $m^2(x)-\minf^2,  m'(x) \in L^1_1$ when  $W_\Dx(0)\neq 0$, and   $m^2(x)-\minf^2,  m'(x) \in L^1_2$ when $W_{\Dx}(0)=0$. Let $P_c(\Dx)$ be the projection onto the absolutely continuous part of the spectrum of $\Dx$. For any $\gamma> 3/2$, 
\begin{align*}
    \|e^{-it\Dx}\langle \Dx\rangle^{-\gamma}P_c(\Dx)\|_{L^1\to L^{\infty}}\lesssim \langle t\rangle ^{-1/2}.
\end{align*}

\end{theorem}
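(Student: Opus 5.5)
We will treat the positive part $P_c^+(\Dx)$; the negative part is handled identically with the Jost functions $h_\pm$ in place of $g_\pm$. By the symmetry of the two kernel formulas at the end of Sec.~\ref{sec:1d_resolvent}, it suffices to bound the case $x>y$ uniformly in $(x,y)$. The plan is to write the kernel as a Klein--Gordon oscillatory integral against an amplitude in $\mF TV(\R)$ (or $A_1(\R)$), with norm uniform in $x,y$. Young's inequality then reduces everything to the free bound
\[
\sup_{s\in\R}\Big|\int_{\R} e^{-it\nu(k)+iks}\,\frac{k}{\nu(k)^{\gamma+1}}\,a(k)\,dk\Big|\lesssim \langle t\rangle^{-1/2}\|a\|_{\mF TV},
\]
which holds for $\gamma>3/2$ by the Klein--Gordon estimates of Sec.~\ref{sec:KG}. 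Indeed, if $a=\widehat\mu$, the integral is a superposition of translates of the free kernel $\int e^{-it\nu(k)+ik(s+\eta)}k\nu^{-\gamma-1}dk$. That free kernel is bounded by $\langle t\rangle^{-1/2}$ uniformly in the shift: van der Corput (Lemma~\ref{lem:vdc}) applies on dyadic pieces, where $\nu''(k)\sim\langle k\rangle^{-3}$, and $\gamma>3/2$ makes the dyadic pieces summable.

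Next we expand the amplitude. From the proof of the Jost-function theorem,
\[
g_\pm(x,k)=\tfrac12 S\big((\nu\pm k\mp i\minf)f_\pm,\ (\nu\pm k\pm i\minf)\widetilde f_\pm\big)^\top,
\]
and $W_\Dx=\frac{\minf\nu}{2(ik-\minf)}W_H$ by Theorem~\ref{thm:1d_wronskians}. For $x>y$ the integrand is $e^{ik(x-y)}$ times entries of the form
\[
\frac{c_1(k)c_2(k)(ik-\minf)}{\minf\,\nu(k)\,W_H(k)}\,\varphi_+^{\#}(x,k)\,\varphi_-^{\#}(y,k),
\]
where the $c_j$ are the linear-in-$(\nu,k)$ prefactors and $\#$ ranges over the $H$ and $\widetilde H$ versions. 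We absorb one power of $\nu$ from $c_1c_2$ into the weight, so the $k\nu^{-\gamma-1}$ decay is still sufficient. It then remains to show that each amplitude factor lies in $\mF TV$ uniformly in the relevant region. Symbols such as $(\nu\pm k\pm i\minf)/\nu$ and $(ik-\minf)/\lambda(k)$ lie in $A_1(\R)$ via $H^1\hookrightarrow A$. Lemma~\ref{lem:wronratio} gives $\lambda/W_H\in A_1$, and the product of $\mF TV$ functions stays in $\mF TV$.

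The main obstacle is the Jost factors when $x$ and $y$ are not on their good sides. We split into three cases.
\begin{enumerate}[(a)]
\item If $x>y\ge 0$, then $\varphi_+(x,\cdot)\in A_1$ uniformly by \eqref{eq:DT_bounds}. The factor $\varphi_-(y,\cdot)$ is on its bad side, so we pair it with the Wronskian and use $\psi_-(y,k)=\frac{k}{W_H}\varphi_-(y,k)$, which Lemma~\ref{lem:badside} bounds uniformly in $\mF TV$.
\item If $0\ge x>y$, the same argument applies with the roles of $\varphi_+$ and $\varphi_-$ reversed.
\item If $x\ge0\ge y$, both factors are on their good sides and $\lambda/W_H\in A_1$ suffices.
\end{enumerate}

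Three details need care in this step. First, the $k$ appearing in $\psi_\pm$ is supplied by the $k\,dk$ measure; in the regular case we instead write $k/W_H=(k/\lambda)(\lambda/W_H)$. Second, the $\widetilde f$ components must be treated via the $\widetilde H$ Wronskian, using Lemma~\ref{lem:Wronskian_identities}: this relates $W_{\widetilde H}$ to $W_H$ by the unimodular $A_1$ factor $(ik+\minf)/(ik-\minf)$, so Lemmas~\ref{lem:wronratio} and \ref{lem:badside} apply to $\widetilde H$ as well. Third, in the resonant case $W_\Dx(0)=0$ we invoke the $L^1_2$ versions of those lemmas. Finally, for $|t|\le1$ the bound follows from integrability of $k\nu^{-\gamma-1}$ applied to the Fourier-side representation, which completes the $\langle t\rangle^{-1/2}$ estimate.
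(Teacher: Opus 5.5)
\textbf{Where the proposal fails.} Your overall architecture is the paper's: SUSY Jost functions, $W_\Dx=\frac{\minf\nu}{2(ik-\minf)}W_H$, the three-way split by the signs of $x,y$ with $\psi_\pm$ on the bad side, and Fourier multiplication against a Klein--Gordon kernel. Your remark that the $\widetilde f_\pm$ terms must go through $W_{\widetilde H}$ and Lemma~\ref{lem:Wronskian_identities} is more careful than the paper.

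The gap is in the explicit symbol factors. The claim that $(\nu\pm k\pm i\minf)/\nu\in A_1(\R)$ via $H^1\hookrightarrow A$ is false. Since $k/\nu(k)\to\pm1$ as $k\to\pm\infty$, no constant can be subtracted to land in $H^1$. In fact $k/\nu\notin\mF TV(\R)$: for $f=\widehat\mu$, both $\frac1T\int_0^T f$ and $\frac1T\int_{-T}^0 f$ tend to $\mu(\{0\})$, so an element of $\mF TV$ with limits at $\pm\infty$ must have equal limits.

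The same obstruction hits your amplitude $a$ with the fixed weight $k\nu^{-\gamma-1}$. Using $(\nu+k-i\minf)(\nu-k+i\minf)=2\minf(\minf+ik)$ and $(\minf+ik)(ik-\minf)=-\nu^2$, the $f_+f_-$ term has amplitude proportional to $\frac{\nu}{W_H}\varphi_+\varphi_-$, after removing the harmless phase $e^{ik(x-y)}$. Because $W_H(k)\sim-2ik$, this has different limits at $\pm\infty$.

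Moving the $k$ into the amplitude, as you do to form $\psi_\pm$, only shifts the problem. With weight $\nu^{-\gamma}$, the $f_+\widetilde f_-$ term has prefactor $(\nu+k-i\minf)(\nu-k-i\minf)=-2i\minf\nu$, so it carries the factor $\frac{ik-\minf}{\nu}$, again with distinct limits at $\pm\infty$. If instead ``absorbing a power of $\nu$ into the weight'' is meant literally, the weight becomes $k\nu^{-\gamma}$. For that weight the dyadic van der Corput sum converges only when $\gamma>5/2$.

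\textbf{The missing idea} is to use the room in the strict inequality $\gamma>3/2$. The paper picks $\eps>0$ with $\gamma-\eps>3/2$, uses the kernel $\mF^{-1}(e^{-it\nu}\nu^{-(\gamma-\eps)})$ from Lemma~\ref{lem:KGdisp}, and moves $\nu^{-\eps}$ into the amplitude. The explicit factors then become $h(k)=\frac{(ik-\minf)(\nu-k)^n(\nu+k)^l}{\nu^{2+\eps}}$ with $n,l\in\{0,1\}$. These are symbols of order at most $-\eps$, so they lie in $A(\R)$ by Lemma~\ref{lem:hhat}. What remains is $\frac{k}{W_H}\varphi_+\varphi_-$ times a phase, or $\psi_-\varphi_+$, $\psi_+\varphi_-$ on the bad side, and this is in $\mF TV$ uniformly.

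An $\eps$-free variant also works. Because $(\nu+k)(\nu-k)=\minf^2$, each product $c_1c_2$ is of order one, so the whole smooth factor $\frac{c_1c_2(ik-\minf)}{\minf\nu^{\gamma+2}}$ is a symbol of order $-\gamma$. The van der Corput proof of Lemma~\ref{lem:KGdisp} uses only such symbol bounds on the weight, so this factor can serve as the weight. That leaves only $\frac{k}{W_H}$ and the Jost factors in the amplitude. One of these two repairs is needed in place of the incorrect $A_1$ claim.
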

\begin{proof}
Fix $\eps>0$ and assume WLOG that $x<y$. We claim that under the hypotheses
$$F(x,y,k)=\dfrac{k g_-(x,k)g_+(y,k)^{\top}\sigma_1}{W_{\Dx}(k)\nu(k)^{1+\eps}},$$ 
is in the Wiener algebra $A(\R)$, as a function of $k$. Moreover, $\displaystyle\sup_{x<y}\| F(x,y,\cdot )\|_{A(\R)}\lesssim 1$.
Recall from Theorem~\ref{thm:1d_wronskians} that
\begin{align*}
    W_{\Dx}(k) = \frac{\minf \nu(k)}{2(ik-\minf)}W_{H}(k),
\end{align*}
where $W_{H}(k)$ is the scalar Wronskian associated to the Schr{\"o}dinger Hamiltonian $H$ given in \eqref{eq:1d_Schrodinger}. Thus
\begin{align*}
    F(x,y,k) = \dfrac{2(ik-\minf)k g_-(x,k)g_+(y,k)^{\top}\sigma_1}{W_{H}(k)\nu(k)^{2+\eps}}
\end{align*}
\subsubsection*{Case 1: $x<0<y$}
Here $F(x,y,k)$ can, up to constants, be written as a product of Schr\"odinger Jost functions $\varphi_{-}(x,k)\varphi_{+}(y,k)$ (each in $A_1(\R)$ uniformly in $x<0<y$, see \eqref{eq:DT_bounds}), times functions of the form
\begin{align*}
    e^{ik(y-x)},    \quad \frac{k}{W_{H}(k)}, \quad h(k) = \frac{(ik-\minf)(\nu(k)-k)^{n}(\nu(k)+k)^{l}}{\nu(k)^{2+\eps}},\qquad n,l = 0,1.
\end{align*}
The term $\frac{k}{W_H(k)}\in A_1(\R)$ in either the resonant or non-resonant case by Lemma~\ref{lem:wronratio} by writing
\begin{align*}
    \frac{k}{W_H(k)} = \frac{k}{\lambda(k)}\frac{\lambda(k)}{W_H(k)}.
\end{align*}
Each function $h(k)\in A(\R)$ for each choice of $n,l$, as they can be broken into a linear combination of functions  \begin{align*}
    \nu(k)^{-2-\eps},\quad k\nu(k)^{-2-\eps}, \quad\nu(k)^{-1-\eps},\quad k\nu(k)^{-1-\eps},\quad\nu(k)^{-\eps},
\end{align*} whose Fourier transform is a Bessel potential or its derivative; see~\cite[Chapter 5]{Stein_SingularIntegrals} (this also follows from Lemma~\ref{lem:hhat} below). Hence, since by the Fourier multiplication rule, a product of functions in $A(\R)$ and $A_1 (\R)$ is in $A(\R)$, we have that $F(x,y,k)\in A(\R)$. 
\subsubsection*{Case 2: $0<x<y$}
Recall that $\psi_-(x,k)\equiv\frac{k}{W_H(k)}\varphi_-(x,k)$. In this case $F(x,y,k)$ can, up to constants, be written as a product of $\psi_{-}(x,k)\varphi_{+}(y,k)$ times functions of the form
\begin{align*}
    e^{ik(y-x)}, \quad h(k) = \frac{(ik-\minf)(\nu(k)-k)^{n}(\nu(k)+k)^{l}}{\nu(k)^{2+\eps}},\qquad n,l = 0,1.
\end{align*}
By Lemma~\ref{lem:badside}, $\psi_{-}(x,k)\varphi_{+}(y,k)$ is in $\mF TV(\R)$ uniformly in $0 < x,y$. Once again, $F(x,y,k)\in A(\R)$.  
The case that $x<y<0$ is treated similarly, except that we use $\psi_{+}(y,k)$. 

We have proven that $\displaystyle\sup_{x<y}\| F(x,y,\cdot )\|_{A(\R)}\lesssim 1$, and we may now proceed to the main dispersive estimate. Having assumed $\gamma > 3/2$ choose $\eps>0$ such that $\gamma -\eps>3/2$. For $x<y$ we have
\begin{align*}
    [e^{-it\Dx}\langle \Dx\rangle^{-\gamma}P_c^+(\Dx)](x,y) &= \frac{1}{2\pi}\int_{\R}e^{-it\nu(k) }\frac{g_-(x,k)g_+(y,k)^{\top}\sigma_1}{W_{\Dx}(k)}\frac{k}{\nu(k)^{\gamma+1}}dk \, .
\end{align*}
Thus by the Fourier multiplication formula and using Lemma~\ref{lem:KGdisp}, we have  
\begin{align*}
\begin{split}
    \|[e^{-it\Dx}\langle \Dx\rangle^{-\gamma}P_c^+(\Dx)](x,y)\|_{L^{\infty}} &\leq\Big\| \mathcal F^{-1} \Big(\frac{e^{-it\nu(k) } }{\nu(k) ^{\gamma-\epsilon} }\Big) \Big\|_{L^\infty} \Big\| F(x,y,\cdot)\Big\|_{A(\R)}\lesssim \langle t\rangle^{-1/2} \, ,
\end{split}
\end{align*}
where the factor of $\la t\ra^{-1/2}$ comes from an application of Lemma~\ref{lem:KGdisp}. The cases of $y<x$ for the positive spectrum as well as the negative spectral bound come from analogous expressions for the kernel of the propagator.
\end{proof}

In the nonresonant case we can improve the rate of decay at the expense of additional spatial weights.
\begin{theorem}\label{thm:1dWeighted}
Assume that $m^2(x)-\minf^2,  m'(x) \in L^1_3$ and  $W_{\Dx}(0)\neq 0$. Let $P_c(\Dx)$ be the projection onto the absolutely continuous part of the spectrum of $\Dx$. For any $\gamma> 3/2$,
\begin{align*}
    \|e^{-it\Dx}\langle \Dx\rangle^{-\gamma}P_c(\Dx)\|_{L_1^1\to L_{-1}^{\infty}}\lesssim \langle t\rangle ^{-3/2}.
\end{align*}
\end{theorem}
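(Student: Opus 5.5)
The approach is to start from the same kernel representation used in the proof of Theorem~\ref{thm:1d_unweighted}. For $x<y$ the positive-spectrum part of the propagator is
\[
\frac{1}{2\pi}\int_{\R}e^{-it\nu(k)}\frac{g_-(x,k)g_+(y,k)^{\top}\sigma_1}{W_{\Dx}(k)}\frac{k\,dk}{\nu(k)^{\gamma+1}},
\]
and the cases $y<x$ and the negative spectrum are analogous. The extra decay $t^{-1}$ will come from one integration by parts in $k$. The key observation is that the amplitude carries the explicit factor $k$. In the regular case $W_{\Dx}(0)\neq0$, which by Theorem~\ref{thm:1d_wronskians} is equivalent to $W_H(0)\neq0$, the amplitude therefore vanishes at $k=0$.

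Writing $e^{-it\nu(k)}k/\nu(k)=\frac{i}{t}\partial_k e^{-it\nu(k)}$, we integrate by parts and obtain
\[
\frac{i}{2\pi t}\int_{\R}e^{-it\nu(k)}\,\partial_k\Big[\frac{g_-(x,k)g_+(y,k)^{\top}\sigma_1}{W_{\Dx}(k)\,\nu(k)^{\gamma}}\Big]dk .
\]
There are no boundary terms, because of the decay in $k$ coming from $\nu^{-\gamma}$ once the growth of $g_\pm$ is taken into account. We then rewrite $W_{\Dx}$ via Theorem~\ref{thm:1d_wronskians} and use Lemma~\ref{lem:wronratio} with $\lambda$ non-vanishing, so that $1/W_H\in A_1(\R)$ after factoring out $\lambda(k)$ and the $\nu$ weights. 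This lets us write the derivative as a sum of terms of the form $e^{ik(y-x)}\,\Phi(x,y,k)\,h(k)$, where $\partial_k$ falls on one of the following:
\begin{itemize}
\item the phase $e^{ik(y-x)}$, producing a factor $(y-x)$;
\item the modified Jost functions $\varphi_\pm$;
\item $1/W_H$, producing $\partial_kW_H/W_H^2$;
\item the symbol $h(k)$, made of powers of $\nu$ and $k$.
\end{itemize}
The claim to prove is that each resulting amplitude, after extracting a factor $\nu(k)^{-(\gamma-\eps)}$ with $\gamma-\eps>3/2$, has $A(\R)$ norm bounded by $\langle x\rangle\langle y\rangle$. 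With that claim, Lemma~\ref{lem:KGdisp} (applied to $e^{-it\nu}\nu^{-(\gamma-\eps)}$, which costs only $\langle t\rangle^{-1/2}$) together with the Fourier multiplication/Young argument yields $|K(x,y)|\lesssim t^{-3/2}\langle x\rangle\langle y\rangle$. For $|t|\le1$ we simply use Theorem~\ref{thm:1d_unweighted}.

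The amplitude bounds are organized by the same case split as before.
\begin{itemize}
\item \emph{Case $x<0<y$.} Each Jost function sits on its ``good'' side. The bounds \eqref{eq:DT_bounds}, the estimate $|\mF\partial_k\varphi_\pm|\lesssim|\xi|I(\xi)$ (which lies in $L^1$ when $V\in L^1_2$), and the last claim of Lemma~\ref{lem:wronratio} ($\partial_kW_H/\lambda\in A_1$) control every term uniformly. The only loss is the factor $|y-x|\le\langle x\rangle\langle y\rangle$ from differentiating the phase.
\item \emph{Cases $0<x<y$ and $x<y<0$.} One Jost function sits on its ``bad'' side. There we use the second part of Lemma~\ref{lem:badside}, which requires $V\in L^1_3$: the bounds $\|\varphi_\pm(x,\cdot)\|_{\mF TV}\lesssim\langle x\rangle$ and $\|\partial_k\varphi_\pm(x,\cdot)\|_{\mF TV}\lesssim\langle x\rangle^2$.
\end{itemize}

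The main obstacle is this bad-side case. The naive count gives $\langle x\rangle^2$ from $\partial_k\varphi_-$, which is too much for the $L^1_1\to L^\infty_{-1}$ weights. To beat it, I would not differentiate the bad-side factor in isolation. Instead I would apply $\partial_k$ to the combination $e^{ik(y-x)}\varphi_-(x,k)\varphi_+(y,k)$, rewritten through \eqref{eq:fpm_relation} as
\[
\tfrac{\widetilde W_H}{W_H}\,e^{ik(x+y)}\varphi_+(x,k)\varphi_+(y,k)+\tfrac{W_H}{W_H}\,e^{ik(y-x)}\varphi_+(x,-k)\varphi_+(y,k).
\]
In this form the phases are differentiated explicitly and give factors $x+y$ and $y-x$, both bounded by $\langle y\rangle\lesssim\langle x\rangle\langle y\rangle$ when $0<x<y$. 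All remaining factors are good-side Jost functions or Wronskian ratios, which lie in $A_1$ by Lemmas~\ref{lem:wronratio} and~\ref{lem:badside}. The hypothesis $W_{\Dx}(0)\neq0$ is used exactly to keep $1/W_H$ and $\partial_kW_H/W_H^2$ bounded near $k=0$. This is why the resonant case is excluded.
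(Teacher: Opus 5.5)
Your skeleton is the paper's: integrate by parts using the factor $k$, reduce via Lemma~\ref{lem:KGdisp} to an $A(\R)$ bound $\lesssim\langle x\rangle\langle y\rangle$ for $\nu^{3/2+}\partial_k(\cdots)$, and split into good-side and bad-side cases. The gap is in the bad-side case. There you diagnose an obstacle that does not exist, and you replace the correct argument with an invalid one.

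\textbf{There is no obstacle.} For $0<x<y$ one has $\langle x\rangle^2\le\langle x\rangle\langle y\rangle$ and $|y-x|\,\langle x\rangle\lesssim\langle x\rangle\langle y\rangle$. For $x<y<0$, where $\varphi_+(y,\cdot)$ is the factor on its bad side, $|y|\le|x|$ gives $\langle y\rangle^2\le\langle x\rangle\langle y\rangle$. So the bounds of Lemma~\ref{lem:badside}, $\|\varphi_\pm\|_{\mF TV}\lesssim\langle\cdot\rangle$ and $\|\partial_k\varphi_\pm\|_{\mF TV}\lesssim\langle\cdot\rangle^2$, already give \eqref{Wcl1}. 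This is exactly how the paper finishes: the ``naive count'' you discard is the proof.

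\textbf{Your workaround fails.} By \eqref{eq:fpm_relation}, $\varphi_-(x,k)=\frac{\widetilde W_H(k)}{-2ik}e^{2ikx}\varphi_+(x,k)+\frac{W_H(k)}{-2ik}\varphi_+(x,-k)$. Your displayed identity drops the factor $(-2ik)^{-1}$. Up to constants, what you wrote is the identity for $\psi_-=\frac{k}{W_H}\varphi_-$, which needs a spare factor of $k$. After the integration by parts that $k$ is gone, since the amplitude is $g_-g_+^{\top}\sigma_1/(W_{\Dx}\nu^{\gamma})$. The terms you would actually differentiate therefore carry $\frac{\widetilde W_H}{-2ik\,W_H}$ and $\frac{1}{-2ik}$. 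Each has a non-integrable $1/k$ singularity at $k=0$, because $\widetilde W_H(0)=-W_H(0)\neq 0$, so neither lies in $A_1(\R)$; only their sum is regular. Resolving that cancellation is precisely what the proof of Lemma~\ref{lem:badside} does. It produces $\frac{e^{2ikx}-1}{k}$, and with it the losses $\langle x\rangle$ and $\langle x\rangle^2$.

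These losses are genuine. For example, $\|\varphi_-(x,\cdot)\|_{\mF TV}\gtrsim|\varphi_-(x,0)|=|f_-(x,0)|\sim|W_H(0)|\,x$ as $x\to+\infty$ in the regular case. So no algebraic rewriting can make every factor uniformly bounded in $A_1$ as you claim. Keep the direct bound from Lemma~\ref{lem:badside} and use the ordering of $x$ and $y$.
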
 

\begin{proof}
We once again only prove the case that $x<y$ and positive spectrum in detail. Assume that $W_{\Dx}(0)\neq 0$ and recall that for $x<y$
$$
    [e^{-it\Dx}\langle \Dx\rangle^{-\gamma}P^+_c(\Dx)](x,y)  = \frac{1}{2\pi}\int_{\R}e^{-it\nu(k) }\frac{g_-(x,k)g_+(y,k)^{\top}\sigma_1}{W_{\Dx}(k)}\frac{k}{\nu(k)^{\gamma+1}}dk.
$$
Writing $e^{-it\nu(k)}\frac{k}{\nu(k)}=\frac{i}{t}\partial_k e^{-it\nu(k)}$ and integrating by parts, we obtain 
\begin{multline*}
    [e^{-it\Dx}\langle \Dx\rangle^{-\gamma}P^+_c(\Dx)](x,y)  = \frac{1}{2\pi i t }\int_{\R}e^{-it\nu(k) } \partial_k\Big(\frac{g_-(x,k)g_+(y,k)^{\top}\sigma_1}{W_{\Dx}(k)}\frac{1}{\nu(k)^{\gamma }}\Big)dk\\
    =  \frac{1}{2\pi i t } \int_{\R} \mathcal F^{-1} \big(e^{-it\nu(k)}\nu(k)^{-\frac32-}\big)(\xi) \mathcal F\Big(\nu(k)^{\frac32+}\partial_k\Big(\frac{g_-(x,k)g_+(y,k)^{\top}\sigma_1}{W_{\Dx}(k)}\frac{1}{\nu(k)^{\gamma }}\Big)\Big)(\xi)  d\xi.  
\end{multline*}
As in the proof of the unweighted estimate (Theorem \ref{thm:1d_unweighted}), using the Fourier multiplication formula and applying the one-dimensional Klein Gordon estimate (Lemma \ref{lem:KGdisp}), the weighted estimate is reduced to proving the following  inequality
\begin{equation}\label{eq:pkhk}
\Big\|  \nu(k)^{\frac32+}\partial_k\Big(\frac{g_-(x,k)g_+(y,k)^{\top}\sigma_1}{W_D(k)}\frac{1}{\nu(k)^{\gamma }}\Big) \Big\|_{A(\R)}\lesssim \langle x\rangle \langle y\rangle,
\end{equation}
To prove \eqref{eq:pkhk}, recall that
$$
    W_{\Dx}(k) = \frac{  \nu(k)}{2(ik-\minf)}W_{H}(k) .
$$  
Therefore the entries of $$
\nu(k)^{\frac32+}\partial_k\Big(\frac{g_-(x,k)g_+(y,k)^{\top}\sigma_1}{W_{\Dx}(k) \nu(k)^{\gamma }} \Big)$$
are linear combinations of
$$ 
\nu(k)^{\frac32+}\partial_k\Big(\frac{ e^{-ik(x-y)}    \varphi_-(x,k)\varphi_+(y,k)}{W_{H}(k)} \ \frac{ (ik-\minf) (\nu(k)-k)^{n}(\nu(k)+k)^{l} }{ \nu(k)^{\gamma+1}} \Big),\quad n,l=0,1. 
$$
With this, the inequality \eqref{eq:pkhk} follows from the Leibniz rule and the following (to be justified) bounds
\begin{align}
\label{Wcl1}
&\big\| \partial_k^{n}[e^{ik(y-x)}    \varphi_-(x,k)\varphi_+(y,k) ] \big\|_{\mF TV(\R)}\les  \la x\ra \la y\ra, \quad n=0,1\\ 
\label{Wcl2}
&\Big\|   \frac{   ik-\minf  }{W_{H}(k)}   \Big\|_{A_1(\R)}   \les 1,\quad \Big\| \partial_k  \frac{   ik-\minf  }{W_{H}(k)}   \Big\|_{A_1(\R)}   \les 1, \\ \label{Wcl3}
&\Big\|  \frac{   (\nu(k)-k)^{n}(\nu(k)+k)^{l} }{ \nu(k)^{\gamma-\frac12-}}  \Big\|_{A(\R)}\les 1, \quad n,l=0,1,\\ \label{Wcl4}
&  \Big\|\nu(k)^{\frac32+} \partial_k \Big(\frac{   (\nu(k)-k)^{n}(\nu(k)+k)^{l} }{ \nu(k)^{\gamma+1}}  \Big)\Big\|_{A(\R)}\les 1, \quad n,l=0,1.
\end{align} 
Note that the bounds  \eqref{Wcl3}, \eqref{Wcl4} concern explicit functions, and therefore follow from Lemma~\ref{lem:hhat} below, provided that $\gamma>3/2$.

The first bound in $\eqref{Wcl2}$ follows from Lemma~\ref{lem:wronratio} by writing
$$
\frac{ik-\minf}{W_{H}(k)}= \frac{ik-\minf}{\lambda(k)} \frac{\lambda(k) }{W_{H}(k)}\in A_1(\R).$$
The second bound in \eqref{Wcl2} also follows from
Lemma~\ref{lem:wronratio} by writing 
$$
\frac{(ik-\minf)\partial_kW_{H}(k) }{W_{H}(k)^2}= \frac{ik-\minf}{\lambda(k)} \frac{\lambda(k)^2 }{W_{H}(k)^2} \frac{\partial_kW_{H}(k) }{\lambda(k)}\in A_1(\R) 
$$ 
and
$$
\frac{1}{W_H(k)}=\frac{\lambda(k)}{W_H(k)}\frac{1}{\lambda(k)}\in A(\R).$$
It remains to prove \eqref{Wcl1}.
Both inequalities follow from \eqref{eq:DT_bounds} when $x<0<y$.

When $0<x<y$, by Lemma~\ref{lem:badside} and \eqref{eq:DT_bounds}, and noting that the oscillatory term does not affect the norm, we have
$$
 \Big\|  e^{ik(y-x)} \varphi_-(x,k)\varphi_+(y,k)  \Big\|_{\mF TV(\R)}\les  \la x\ra.
$$
Similarly, for the $k$-derivative, we have the bound $|x-y|\la x\ra \les \la x\ra \la y\ra$ when the derivative hits the oscillatory term, and   the bound $\la x\ra^2  \leq \la x\ra \la y\ra$     when  the derivative hits the modified Jost functions.
\end{proof}

The following is a well-known bound that suffices for our purposes. We include a proof for completeness.
\begin{lemma}\label{lem:hhat}
Let $f$ be a $C^\infty$ function on $\R^n$ satisfying for some $\eps>0$
$$|\nabla^N f(k)|\leq C_{\eps,N} \la k\ra^{-\eps-N} \, ,\qquad \forall N=0,1,2,\ldots \, .$$
Then $f$ and all of its derivatives are in $A(\R^n)$.
\end{lemma}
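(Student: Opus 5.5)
Since every derivative $\partial^\alpha f$ again satisfies hypotheses of the same form (with $\eps$ replaced by $\eps+|\alpha|$ and constants shifted), it suffices to show $\widehat f\in L^1(\R^n)$. I would do this by a Littlewood--Paley decomposition in $k$. Fix a smooth partition of unity $1=\chi_0(k)+\sum_{j\ge1}\chi(2^{-j}k)$ with $\chi_0$ supported in $|k|\le 2$ and $\chi$ supported in the annulus $1/2\le|k|\le 2$. Write $f=f_0+\sum_{j\ge1} f_j$ with $f_0=\chi_0 f$ and $f_j=\chi(2^{-j}\cdot)f$. Then $\|\widehat f\|_{L^1}\le \sum_{j\ge0}\|\widehat{f_j}\|_{L^1}$, and it remains to bound each term with a summable constant.

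The piece $f_0$ is a $C_c^\infty$ function, so $\widehat{f_0}$ is Schwartz and in $L^1$. For $j\ge1$, I would rescale by setting $g_j(k)=f_j(2^jk)=\chi(k)f(2^jk)$. Then $\widehat{f_j}(\xi)=2^{jn}\widehat{g_j}(2^j\xi)$, so $\|\widehat{f_j}\|_{L^1}=\|\widehat{g_j}\|_{L^1}$. Since $g_j$ is supported in the fixed annulus, the Leibniz rule together with the symbol bounds $|\nabla^N f(k)|\le C_N\la k\ra^{-\eps-N}$ gives
\[
|\nabla^N g_j(k)| \les \sum_{M\le N} 2^{jM}\,|(\nabla^M f)(2^jk)|\les \sum_{M\le N} 2^{jM}2^{-j(\eps+M)}\les 2^{-j\eps}.
\]
Taking $N=n+1$ and integrating by parts, $|\widehat{g_j}(\xi)|\les 2^{-j\eps}\la\xi\ra^{-n-1}$, because the support of $g_j$ has bounded measure. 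Hence $\|\widehat{g_j}\|_{L^1}\les 2^{-j\eps}$, and summing the geometric series in $j$ gives $\widehat f\in L^1$.

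The only point requiring care is the uniformity of the scaling step: the rescaled pieces $g_j$ must have derivatives bounded by $2^{-j\eps}$ uniformly, which is exactly where the hypothesis that each derivative gains an extra factor $\la k\ra^{-1}$ is used. The rest of the argument is routine. Applying the same result to $\partial^\alpha f$ then yields the claim for all derivatives.
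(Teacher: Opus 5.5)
Your proof is correct and follows essentially the same route as the paper: a dyadic decomposition in $k$, integration by parts on each piece using the symbol bounds, and a geometric sum in $j$. The only difference is bookkeeping. You rescale each piece to the unit annulus and use the dilation invariance of the $A(\R^n)$ norm to get $\|f\chi(2^{-j}\cdot)\|_{A(\R^n)}\les 2^{-j\eps}$ directly. The paper instead sums the pointwise bounds $|\xi|^{-N}2^{-j(\eps+N)}2^{jn}$ over $j$, treating $|\xi|\gtrsim 1$ and $|\xi|\les 1$ separately; by Tonelli this amounts to the same estimate.
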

\begin{proof} The claim for the derivatives follows from the claim for $f$ since they satisfy the same bounds. Also note that for any compactly supported smooth $\chi_0$, $f\chi_0\in A(\R^n)$ as it is Schwartz.  

Let $\chi $ be a smooth cutoff for the set $\{k\in\R^n:|k|\approx 1\}$.   By repeated integration  by parts, for each $j,N=0,1,2,\ldots,$ we have $$ |\mF\big(f(\cdot)\chi(\cdot 2^{-j})\big)(\xi)|\les |\xi|^{-N}  2^{-j(\eps+N)} 2^{jn}.$$  
Using this bound for $N=n+1 $ and summing over $j$, we see that $\widehat f\in L^1 ({|\xi|\gtrsim 1})$.

For $|\xi|\les 1$, using the bound above for $N=0$ and $N=n$, we have
$$ \sum_{j=0}^\infty |\mF\big(f(\cdot)\chi(\cdot 2^{-j})\big)(\xi)| \les \sum_{j=0}^\infty 2^{-j\eps}\min(2^{jn},|\xi|^{-n})\les |\xi|^{\eps-n},$$
which is in $L^1({|\xi|\les 1})$.
\end{proof}

\section{Dispersive Estimates for two-dimensional Dirac operators with bounded domain wall}\label{sec:2d_bounded_estimates}
In this section we return to the study of the two-dimensional Dirac operator
\begin{align*}
    D = -i\partial_{x_1}\sigma_1 -i\partial_{x_2}\sigma_2 + m(x_2)\sigma_3 \, ,
\end{align*}
with a bounded domain wall. In Sec.\ \ref{sec:2dbd_spec} we use the fact that $D$ is translation invariant in the variable $x_1$ to study a fibered family of one-dimensional Dirac operators, $\hatD$, defined in \eqref{eq:fibered_dirac}. From here on, the proof progresses in much the same way as it did in the one-dimensional settings of Sec.\ \ref{sec:1d_estimates}.  In Secs.\ \ref{sec:2d_dirac_jost} and \ref{sec:2d_dirac_wron} we develop the scattering theory associated with this fibered family, once again taking advantage of the Schr\"odinger scattering theory developed in Sec.\ \ref{sec:1d_shrodinger}. In Sec.\ \ref{sec:2d_resolvent} we obtain formulas for the spectral measure associated to the operator $D$. This is more delicate than the analogous steps for the one-dimensional equation: Although $D$ has purely absolutely continuous spectrum, each fiber $\widehat{D}(k_1)$ has nontrivial point spectrum. Finally, the proofs of Theorems \ref{thm:2d_main_theorem} and \ref{thm:2dweighted} are given in Sec.\ \ref{sec:2d_proofs}.

\subsection{Spectrum}\label{sec:2dbd_spec}
We require precisely the same hypotheses on the function $m(x)$ as in Sec.\ \ref{sec:1d_estimates}; specifically that $m^2(x)-\minf^2,m'(x)\in L^1_1(\R)$. In order to derive dispersive estimates associated to this operator we need to develop an analogous Jost function framework. As the operator is translation invariant in the variable $x_1$, we take the Fourier transform in this variable and study the family of fibered Dirac Hamiltonians $\hatD$.

For each $k_1\in\R$, the $x_1\to k_1$ Fourier transform  of the operator $D$ is 
\begin{align}\label{eq:fibered_dirac}
    \hatD = k_1\sigma_1 -i\partial_{x_2}\sigma_2+m(x_2)\sigma_3 \, .
\end{align}
Let $S = \dfrac{1}{\sqrt{2}}\begin{pmatrix}
    1 & 1\\
    1 & - 1
\end{pmatrix}$. Since  $S^{-1}=S$,  
\begin{align}\label{eq:SDS2}
    S\hatD S = \begin{pmatrix}
        k_1 & A\\ A^{*} & -k_1
    \end{pmatrix},\quad   S\hatD^2S = \begin{pmatrix}
        H+\minf^2+k_1^2& 0\\ 0 & \widetilde{H}+\minf^2+k_1^2
    \end{pmatrix} \, ,
\end{align}
where $A, A^*, H,$ and $\widetilde{H}$ are all as in Section~\ref{sec:SUSY}. By Lemma \ref{lem:SUSY_prop} we have
\begin{align*}
    \sigma(H) = \{\lambda_j\}_{j=1}^N\cup[0,\infty) \, ,
\end{align*}
i.e., the spectrum of $H$ is composed of absolutely continuous spectrum on the non-negative real line and a finite number of simple negative eigenvalues. Moreover, we have $\la \psi,H\psi\ra > -\minf^2\|\psi\|^2$ and hence each eigenvalue $\lambda_j > -\minf^2$. For notational purposes, let $\{\phi_j\}_{j=0}^N$ and $\{\pi_j\}_{j=1}^N$ be the eigenfunctions of $\widetilde{H}$ and $H$, respectively. In particular
\begin{align*}
    \widetilde{H}\phi_0 = -\minf^2\phi_0 \, ,\qquad \widetilde{H}\phi_j = \lambda_j\phi_j \, ,\qquad H\pi_j=\lambda_j\pi_j \, ,\qquad j=1,\dots, N.
\end{align*}
\begin{lemma}\label{lem:hatD_spectrum}
For each $k_1\in\R$ the spectrum of $\hatD$ is composed of
\begin{itemize}
    \item Absolutely continuous spectrum in $(-\infty,-\sqrt{\minf^2+k_1^2}]\cup[+\sqrt{\minf^2+k_1^2},\infty)$.
    \item Finitely many simple eigenvalues in the gap $\left\{\nu_j(k_1)\right\}_{j=-N}^N$, where
    \begin{align*}
    \nu_j(k_1)=\begin{cases}
        -\sqrt{\minf^2+k_1^2+\lambda_{|j|}} & -N \leq j < 0\\
        -k_1 & j = 0\\
         +\sqrt{\minf^2+k_1^2+\lambda_j} & 0 < j \leq N
    \end{cases}.
    \end{align*}
    \end{itemize}
    Moreover, the normalized eigenfunctions are given by $\Phi_{j}(k_1,x_2)$ for $j=-N,\dots,N$.
    \begin{align*}
        &\Phi_0(k_1,x_2) = \frac{1}{\sqrt{2}}\begin{pmatrix}
            +\phi_0(x_2)\\ -\phi_0(x_2)
        \end{pmatrix},\quad \hatD \Phi_0(k_1,x_2) = -k_1 \Phi_0(k_1,x_2).\\
        &\Phi_{j}(k_1,x_2) = \frac{1}{\sqrt{4\nu_j(k_1)(\nu_j(k_1)+k_1})}\begin{pmatrix}
            (k_1+\nu_j(k_1))\phi_{|j|}(x_2) +\sqrt{m_{\infty}^2 + \lambda_{|j|}}\pi_{|j|}(x_2)\\
            (k_1+\nu_j(k_1))\phi_{|j|}(x_2) -\sqrt{m_{\infty}^2+\lambda_{|j|}}\pi_{|j|}(x_2)
        \end{pmatrix}.\\
        &\hatD\Phi_{j}(k_1,x_2) = \nu_{j}(k_1)\Phi_{j}(k_1,x_2).
    \end{align*}
\end{lemma}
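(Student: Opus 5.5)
The plan is to reduce everything to the block form \eqref{eq:SDS2}. Since $S$ is unitary and $S=S^{-1}$, the spectrum of $\hatD$ equals that of $M(k_1)\equiv\begin{pmatrix} k_1 & A\\ A^* & -k_1\end{pmatrix}$. Moreover $M(k_1)^2=\mathrm{diag}(AA^*+k_1^2,\,A^*A+k_1^2)=\mathrm{diag}(H+\minf^2+k_1^2,\,\widetilde H+\minf^2+k_1^2)$. The spectral mapping theorem then gives $\sigma(\hatD)^2\subset \sigma(H+\minf^2+k_1^2)\cup\sigma(\widetilde H+\minf^2+k_1^2)$. By Lemma~\ref{lem:SUSY_prop}, the right-hand side is $[\minf^2+k_1^2,\infty)\cup\{\minf^2+k_1^2+\lambda_j\}_{j=1}^N\cup\{k_1^2\}$, the last point coming from $\phi_0$. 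So every spectral point of $\hatD$ is $\pm$ the square root of one of these values. The remaining work is to decide which signs actually occur, and with what multiplicity and type of spectrum.

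\textbf{Continuous spectrum.} I would argue via Weyl sequences, or via the resolvent identity with the Jost construction of Section~\ref{sec:1d_estimates} transplanted to the fiber. For $|x_2|\to\infty$ the operator $\hatD$ is asymptotically $k_1\sigma_1-i\sigma_2\partial_{x_2}\pm\minf\sigma_3$, whose spectrum is $\pm\sqrt{k_2^2+k_1^2+\minf^2}$. Localized plane waves far out therefore give both half-lines $(-\infty,-\sqrt{\minf^2+k_1^2}]\cup[\sqrt{\minf^2+k_1^2},\infty)$. Absolute continuity there follows as for $\Dx$: squaring, the spectrum of $\hatD^2$ on $[\minf^2+k_1^2,\infty)$ is that of $H,\widetilde H$ on $[0,\infty)$, which is purely absolutely continuous for short-range $V,\widetilde V\in L^1_1$. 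This is standard, since there are no embedded eigenvalues and the limiting absorption principle holds. The same conclusion transfers to $\hatD$ through the spectral theorem.

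\textbf{Eigenvalues, by explicit verification.} For $j=0$, $A^*\phi_0=(-\partial+m)\phi_0$. Since $\phi_0=e^{-\int m}$ gives $\phi_0'=-m\phi_0$, I would check the convention so that $A\phi_0=0$, which is the equation $A^*A\phi_0=0$, i.e.\ $\widetilde H\phi_0=-\minf^2\phi_0$. Then $M(k_1)(0,\phi_0)^\top=(A\phi_0,-k_1\phi_0)^\top=-k_1(0,\phi_0)^\top$, and conjugating by $S$ produces $\Phi_0=\tfrac1{\sqrt2}(\phi_0,-\phi_0)^\top$.

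For $j\ne0$, I would use the intertwining relations: with $\mu_j=\sqrt{\minf^2+\lambda_j}>0$ (positive because $\lambda_j>-\minf^2$), normalize so that $A\phi_j=\mu_j\pi_j$ and $A^*\pi_j=\mu_j\phi_j$. These hold because $AA^*$ and $A^*A$ share the eigenvalue $\mu_j^2$, with $A$ and $A^*$ mapping one eigenfunction to the other. Seek eigenvectors $(a\pi_j,b\phi_j)$ of $M(k_1)$. This reduces to the $2\times2$ matrix $\begin{pmatrix}k_1&\mu_j\\ \mu_j&-k_1\end{pmatrix}$, whose eigenvalues are $\pm\sqrt{k_1^2+\mu_j^2}=\nu_{\pm j}(k_1)$. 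Its eigenvectors are proportional to $(\mu_j,\nu-k_1)$, or equivalently $(\nu+k_1,\mu_j)$. Applying $S$ and normalizing, using $(\nu+k_1)^2+\mu_j^2=2\nu(\nu+k_1)$, yields the stated formula for $\Phi_j$.

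\textbf{Simplicity and completeness.} Any eigenvector $u$ of $\hatD$ with eigenvalue $\nu$ in the gap has $S u$ with components in the $\nu^2$-eigenspaces of $AA^*+k_1^2$ and $A^*A+k_1^2$. These eigenspaces are one-dimensional, since one-dimensional Schr\"odinger eigenvalues are simple. So the eigenspace sits inside the span of $(\pi_j,0)$ and $(0,\phi_j)$, and we are back to the $2\times2$ computation. That computation gives exactly one eigenvector per sign, so each eigenvalue is simple. For $\nu^2=k_1^2$ only $\phi_0$ contributes, and $M(k_1)(0,\phi_0)^\top=-k_1(0,\phi_0)^\top$, so $+k_1$ does not occur unless $k_1=0$.

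\textbf{Main obstacle.} I expect the main difficulty to be the degenerate parameter cases rather than the algebra. One such case is $\nu_j+k_1=0$ in the normalization, which cannot happen for $j>0$. For $j<0$ it can, since $\nu_j+k_1=k_1-\sqrt{k_1^2+\mu_j^2}\neq0$ but the product $\nu_j(\nu_j+k_1)$ must be checked to be positive. The other is confirming there are no embedded eigenvalues at the thresholds $\pm\sqrt{\minf^2+k_1^2}$. For the latter I would use the fact that zero-energy solutions of $H$ and $\widetilde H$ are not in $L^2$ for $V\in L^1_1$.
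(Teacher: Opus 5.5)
Your proposal is correct and follows the paper's route. You conjugate by $S$ to the block form \eqref{eq:SDS2}, use that $\widehat D(k_1)$ preserves the eigenspaces $E_0,E_j$ of $\widehat D(k_1)^2$, and diagonalize the resulting $2\times2$ matrix. The continuous part comes from the squared operator: the paper uses the resolvent identity, and you use Weyl sequences plus the spectral theorem. Your handling of simplicity, of why $+k_1$ does not occur, and of threshold eigenvalues is more careful than the paper's one-line ``direct computation''.

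One correction concerns the eigenfunction formula. Use your normalization $A\phi_{|j|}=\mu\,\pi_{|j|}$, $A^*\pi_{|j|}=\mu\,\phi_{|j|}$, with $\mu=\sqrt{\minf^2+\lambda_{|j|}}$. Carrying out your last step, i.e.\ applying $S$ to $\big((\nu_j+k_1)\pi_{|j|},\,\mu\,\phi_{|j|}\big)^\top$ and normalizing, gives
\[
\Phi_j(k_1,x_2)=\frac{1}{\sqrt{4\nu_j(k_1)(\nu_j(k_1)+k_1)}}\begin{pmatrix}(k_1+\nu_j(k_1))\,\pi_{|j|}(x_2)+\mu\,\phi_{|j|}(x_2)\\ (k_1+\nu_j(k_1))\,\pi_{|j|}(x_2)-\mu\,\phi_{|j|}(x_2)\end{pmatrix}.
\]
This is the printed formula with $\phi_{|j|}$ and $\pi_{|j|}$ interchanged. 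The printed vector is a multiple of $(k_1+\nu_j)(\phi_{|j|},\phi_{|j|})^\top+\mu(\pi_{|j|},-\pi_{|j|})^\top$. Since $\phi_{|j|}$ and $\pi_{|j|}$ are linearly independent, this vector does not lie in $E_{|j|}=\mathrm{span}\{(\phi_{|j|},-\phi_{|j|})^\top,(\pi_{|j|},\pi_{|j|})^\top\}$. It therefore cannot be an eigenvector of $\widehat D(k_1)$. Your derivation corrects the statement rather than reproducing it, so you should not claim agreement with the printed $\Phi_j$.

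A minor point: for $j<0$, $\nu_j+k_1=k_1-\sqrt{k_1^2+\mu^2}$ never vanishes, since $\mu>0$. Also $\nu_j(\nu_j+k_1)>0$ because both factors are negative. Your sentence there is self-contradictory but harmless.
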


\begin{proof}
The statement about the absolutely continuous spectrum follows from the identity
\begin{align*}
    (\hatD-z)^{-1} = (\hatD+z)S\begin{pmatrix}
        (H+\minf^2+k_1^2-z^2)^{-1} & 0\\
        0 & (\widetilde{H}+\minf^2 + k_1^2-z^2)^{-1}
    \end{pmatrix}S.
\end{align*}
As for the point spectrum, $S\hatD^2 S$ has eigenvalues $k_1^2$ and $\minf^2+k_1^2+\lambda_j$ with eigenspaces
\begin{align*}
        E_{0} &= \ker(\hatD^2-k_1^2) = span\left\{\begin{pmatrix}
            \phi_0(x_2) \\ -\phi_0(x_2)
        \end{pmatrix}\right\} ,\\
        E_{j} &=\ker(\hatD^2-(\minf^2+k_1^2+\lambda_j))= span\left\{\begin{pmatrix}
            \phi_{j}(x_2) \\ -\phi_{j}(x_2)
        \end{pmatrix}, \begin{pmatrix}
            \pi_{j}(x_2) \\ \pi_{j}(x_2)
        \end{pmatrix}\right\} ,\quad j=1,\dots,N.
\end{align*}
As $\hatD$ and $\hatD^2$ commute, $\hatD$ preserves the eigenspaces $E_j$. A direct computation shows that the eigenvalues and eigenfunctions of $\hatD$ are as stated in the lemma. 
\end{proof}
Using the lemma above, we define the projections  onto the bulk and edge parts of the spectrum of $D$, denoted by $P_{\rm edge}$ and $P_{\rm bulk}$, and illustrated in Fig.\ \ref{fig:spectrum}. The image of each projection evolves in a qualitatively different way, as shown by the dispersive decay estimates, Theorems \ref{thm:2d_main_theorem} and \ref{thm:2dweighted}.
\begin{definition}\label{def:projections}
For $j=-N,\dots,N$, define projections $P_j$ on $L^2(\R^ 2;\C^2)$ by 
\begin{equation}\label{eq:Pj2dDef}
(P_j f) (x_1, x_2) = \int\limits_{\R} e^{ik_1 x_1} \langle \Phi_{j}(k_1, \cdot) , \widehat{f}(k_1,\cdot)\rangle_{L^2 (\R_{x_2};\C^2)} \Phi_{j}(k_1, x_2) \, \frac{dk_1}{2\pi} \, , \qquad f\in L^2 (\R ^2; \C^2) \, .
\end{equation}
We also define ``bulk" and ``edge" projections $P_{\rm bulk}$ and $P_{\rm edge}$.
\begin{equation}\label{eq:Pbulk2d}
P_{\rm bulk} = Id - P_{\rm edge} \,  ,\qquad P_{\rm edge} \equiv  \sum_{ j=-N}^{N}  P_j \, .
\end{equation}
\end{definition}
\subsection{Two-dimensional Dirac Jost functions}\label{sec:2d_dirac_jost}
We can now construct the Jost functions which are the two-dimensional analogs of \eqref{eq:1d_gpm_def}.
\begin{theorem}
Let $k_1,k_2\in\R$, $k_2\neq 0$, and $S = \dfrac{1}{\sqrt{2}}\begin{pmatrix}
    1 & 1\\
    1 & - 1
\end{pmatrix}$ and 
\begin{align}\label{eq:2d_nu}
    \nu(k_1,k_2) = \sqrt{m_{\infty}^2+k_1^2+k_2^2}.
\end{align}
Define $g_{\pm}(x_2,k_1,k_2)$ through the formulas
\begin{align*}
    g_{\pm}(x_2,k_1,k_2) &= \frac{1}{2}(\hatD+\nu(k_1,k_2))S\begin{pmatrix}
        f_{\pm}(x_2,k_2)\\ \widetilde{f}_{\pm}(x_2,k_2)
    \end{pmatrix}.
\end{align*}
Then $g_{\pm}(x_2,k_1,k_2)$ are Jost functions for $\hatD$ in the sense that they satisfy the equation
\begin{align*}
    (\hatD-\nu(k_1,k_2))g_{\pm} = 0,
\end{align*}
along with the one-sided asymptotics
\begin{align*}
    g_{\pm }(x_2,k_1,k_2) &\sim \frac{1}{\sqrt{2}}\begin{pmatrix}
        \pm \minf + \nu(k_1,k_2) \\
        k_1\pm ik_2
    \end{pmatrix}e^{\pm ik_2x_2},\quad x_2\to \pm \infty.
\end{align*}
\end{theorem}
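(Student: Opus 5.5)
The argument mirrors the one-dimensional Jost theorem, using the conjugation identity \eqref{eq:SDS2} in place of \eqref{eq:SDS}.

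\emph{The eigenvalue equation.} Write $F_\pm=(f_\pm(x_2,k_2),\widetilde f_\pm(x_2,k_2))^\top$ and $\nu=\nu(k_1,k_2)$. Since $S^{-1}=S$ and $\hatD$ commutes with itself,
\begin{align*}
(\hatD-\nu)g_\pm=\tfrac12(\hatD^2-\nu^2)SF_\pm=\tfrac12 S\big(S\hatD^2S-\nu^2\big)F_\pm .
\end{align*}
By \eqref{eq:SDS2}, $S\hatD^2S-\nu^2=\mathrm{diag}(H-k_2^2,\widetilde H-k_2^2)$. This diagonal operator annihilates $F_\pm$ by the defining property \eqref{eq:HJost} of the Schr\"odinger Jost functions. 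Hence $(\hatD-\nu)g_\pm=0$.

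\emph{The asymptotics.} From \eqref{eq:SDS2}, $(\hatD+\nu)S=S\begin{pmatrix}k_1+\nu & A\\ A^* & \nu-k_1\end{pmatrix}$. Therefore
\begin{align*}
g_\pm=\tfrac12 S\begin{pmatrix}(\nu+k_1)f_\pm+A\widetilde f_\pm\\ A^*f_\pm+(\nu-k_1)\widetilde f_\pm\end{pmatrix}.
\end{align*}
The intertwining relations \eqref{eq:intertwining}, with $k=k_2$, give $A\widetilde f_\pm=\pm(ik_2+\minf)f_\pm$ and $A^*f_\pm=\pm(-ik_2+\minf)\widetilde f_\pm$. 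So
\begin{align*}
g_\pm=\tfrac12 S\begin{pmatrix}(\nu+k_1\pm\minf\pm ik_2)f_\pm\\ (\nu-k_1\pm\minf\mp ik_2)\widetilde f_\pm\end{pmatrix}.
\end{align*}
Now multiply by $e^{\mp ik_2x_2}$ and let $x_2\to\pm\infty$. Both $f_\pm e^{\mp ik_2x_2}$ and $\widetilde f_\pm e^{\mp ik_2x_2}$ tend to $1$, so the limit is $\tfrac12 S$ applied to the vector of the two coefficients. Computing with $S=\frac1{\sqrt2}\begin{pmatrix}1&1\\1&-1\end{pmatrix}$: the first entry is $\frac{1}{2\sqrt2}(2\nu\pm2\minf)$ and the second is $\frac{1}{2\sqrt2}(2k_1\pm2ik_2)$. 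This yields $\frac1{\sqrt2}(\nu\pm\minf,\,k_1\pm ik_2)^\top e^{\pm ik_2x_2}$, as claimed.

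\emph{Where care is needed.} No analytic difficulty is expected; the only real work is bookkeeping. One check is the sign and orientation conventions in \eqref{eq:intertwining}, which the one-dimensional proof fixes, and the fact that they are used here with $k=k_2$. The other check is that $S$ now has real entries, unlike the $S$ of Section~\ref{sec:1d_dirac_jost}. If the intertwining signs were mismatched, the $k_2$-dependent terms would appear with the wrong phase. I would verify this by a direct check in the constant case, $m\equiv\minf$ on the half-line, where $f_\pm=\widetilde f_\pm=e^{\pm ik_2x_2}$ exactly.
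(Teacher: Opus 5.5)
Your proof is correct and follows the paper's argument essentially line for line. You use \eqref{eq:SDS2} to reduce $(\hatD-\nu)g_\pm$ to $\mathrm{diag}(H-k_2^2,\widetilde H-k_2^2)$ acting on the Schr\"odinger Jost functions, then apply the intertwining relations \eqref{eq:intertwining} at $k=k_2$ to get the same explicit coefficients as the paper, and from there the limit you compute for $\tfrac12 S$ applied to them matches the stated asymptotics.
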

\begin{rmk}
The Jost functions $g_{\pm}(x_2,k_1,k_2)$ are associated with the positive part of the spectrum of $\hatD$. As in the one-dimensional case, Jost functions associated to the negative part of the spectrum can be constructed analogously.
\end{rmk}
\begin{proof}
Using \eqref{eq:SDS2}, we can compute
\begin{align*}
    S(\hatD-\nu(k_1,k_2))g_{\pm}&=\frac{1}{2}S(\hatD^2-\nu(k_1,k_2)^2)S\begin{pmatrix}
        f_{\pm}(x_2,k_2)\\ \widetilde{f}_{\pm}(x_2,k_2)\end{pmatrix}\\
        &= \frac{1}{2}\begin{pmatrix}
            H - k_2^2 & 0 \\
            0 & \widetilde{H} -k_2^2
        \end{pmatrix}\begin{pmatrix}
        f_{\pm}(x_2,k_2)\\ \widetilde{f}_{\pm}(x_2,k_2)\end{pmatrix}=0 \,,
\end{align*}
with $f_{\pm}(x,k)$ and $\widetilde{f}_{\pm}(x,k)$ as in \eqref{eq:HJost}. As $S$ is invertible we have $(\hatD-\nu(k_1,k_2))g_{\pm}=0$. Thus, using \eqref{eq:SDS2} again, we can rewrite $g_{\pm}(x_2,k_1,k_2)$ as
\begin{align*}
    g_{\pm}(x_2,k_1,k_2) &= \frac{1}{2}(\hatD+\nu(k_1,k_2))S\begin{pmatrix}
          f_{\pm}(x_2,k_2)\\ \widetilde{f}_{\pm}(x_2,k_2)
    \end{pmatrix}\\
    &= \frac{1}{2}S\begin{pmatrix}
        k_1 + \nu(k_1,k_2) & A\\ A^{*} & -k_1+\nu(k_1,k_2)
    \end{pmatrix}\begin{pmatrix}
          f_{\pm}(x_2,k_2)\\ \widetilde{f}_{\pm}(x_2,k_2)
    \end{pmatrix}\\
    &=\frac{1}{2} S\begin{pmatrix}
        (k_1+\nu(k_1,k_2))f_{\pm}(x_2,k_2) + A\widetilde{f}_{\pm}(x_2,k_2)\\
        A^{*}f_{\pm}(x_2,k_2) + (-k_1+\nu(k_1,k_2))\widetilde{f}_{\pm}(x_2,k_2)
    \end{pmatrix}
\end{align*}
By the intertwining relations \eqref{eq:intertwining} 
\begin{align}\label{eq:gpmdef}
    g_{+}(x_2,k_1,k_2) &= \frac{1}{2}S\begin{pmatrix}
        (\nu(k_1,k_2)+k_1+ik_2+\minf)f_{+}(x_2,k_2)\\
        (\nu(k_1,k_2)-k_1-ik_2+\minf)\widetilde{f}_{+}(x_2,k_2)
    \end{pmatrix}\\
    g_{-}(x_2,k_1,k_2) &=\frac{1}{2}S\begin{pmatrix}
        (\nu(k_1,k_2)+k_1-ik_2-\minf )f_{-}(x_2,k_2)\\
        (\nu(k_1,k_2)-k_1 +ik_2-\minf)\widetilde{f}_{-}(x_2,k_2)
    \end{pmatrix}
\end{align}
The asymptotics are checked directly using those satisfied by $f_{\pm}$ and $\widetilde{f}_{\pm}.$
\end{proof}
\subsection{Wronskian Calculation}\label{sec:2d_dirac_wron}
Once again we can relate the Wronskian of $\hatD$ to the Wronskian of the one-dimensional Schr\"odinger operator \eqref{eq:1d_Schrodinger}.
\begin{theorem}\label{thm:2d_wronskians}
 We define the Dirac Wronskian, $W_{\widehat{D}}(k_1,k_2)$,   associated to the Jost solutions of the Dirac operator $\hatD$:
\begin{align*}
      W_{\widehat{D}}(k_1,k_2) = \det[g_{+}(\cdot,k_1,k_2),g_{-}(\cdot,k_1,k_2)].
\end{align*}
We have the following relation  between the Schr\"odinger and Dirac Wronskians:
\begin{align}\label{eq:2d_wroskian}
    W_{\widehat{D}}(k_1,k_2) &=\frac{i\nu(k_1,k_2)k_2-k_1\minf}{2(ik_2-\minf)}W_{H}(k_2).
\end{align}
\end{theorem}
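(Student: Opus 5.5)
The plan is a direct computation, parallel to the proof of Theorem~\ref{thm:1d_wronskians}. I will use the explicit form \eqref{eq:gpmdef} of $g_\pm$ and the intertwining relations \eqref{eq:intertwining} applied at $k=k_2$. This reduces the $2\times 2$ determinant to a scalar multiple of the Schr\"odinger Wronskian $W_H(k_2)$.

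\emph{Factoring out $S$.} Write $g_+=\tfrac12 S\,(\alpha_+ f_+,\ \beta_+\widetilde f_+)^{\top}$ and $g_-=\tfrac12 S\,(\alpha_- f_-,\ \beta_-\widetilde f_-)^{\top}$, with
\begin{align*}
\alpha_\pm&=\nu+k_1\pm(ik_2+\minf), & \beta_\pm&=\nu-k_1\mp(ik_2-\minf)\cdot(\mp 1)\ \text{as read off from \eqref{eq:gpmdef}}.
\end{align*}
Concretely, $\alpha_+=\nu+\minf+p$, $\beta_-=\nu-\minf-\bar p$, $\beta_+=\nu+\minf-p$ and $\alpha_-=\nu-\minf+\bar p$, where $p=k_1+ik_2$ and $\nu=\nu(k_1,k_2)$. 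Since $\det[Su,Sv]=\det(S)\det[u,v]$ and $\det S=-1$ for this $S$, we get
$$W_{\widehat D}=-\tfrac14\bigl(\alpha_+\beta_-\,f_+\widetilde f_- - \beta_+\alpha_-\,\widetilde f_+ f_-\bigr).$$

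\emph{The two coefficient products.} Both simplify using $\nu^2-\minf^2=|p|^2$. One finds
$$\alpha_+\beta_-=\nu(p-\bar p)-\minf(p+\bar p)=2(i\nu k_2-\minf k_1), \qquad \beta_+\alpha_-=-2(i\nu k_2-\minf k_1).$$
Hence
$$W_{\widehat D}=-\tfrac12\,(i\nu k_2-\minf k_1)\bigl(f_+\widetilde f_-+f_-\widetilde f_+\bigr),$$
where all Jost functions are evaluated at $(x_2,k_2)$.

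\emph{Eliminating $\widetilde f_\pm$.} From \eqref{eq:intertwining} we have $\widetilde f_+=A^*f_+/(\minf-ik_2)$ and $\widetilde f_-=-A^*f_-/(\minf-ik_2)$. Therefore
$$f_+\widetilde f_-+f_-\widetilde f_+=\frac{-f_+A^*f_-+f_-A^*f_+}{\minf-ik_2}.$$
The $m(x_2)f_+f_-$ terms cancel, since $A^*=-\partial_{x_2}+m$. What remains is $(f_+f_-'-f_-f_+')/(\minf-ik_2)=W_H(k_2)/(\minf-ik_2)$. Substituting this gives \eqref{eq:2d_wroskian}.

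The only real obstacle is bookkeeping: tracking the signs in $\alpha_\pm,\beta_\pm$ and the sign of $\det S$ correctly. The key cancellation $\nu^2-\minf^2-|p|^2=0$ is what makes the cross terms collapse to the single factor $i\nu k_2-\minf k_1$.
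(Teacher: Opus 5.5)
Your proposal is correct and follows the paper's proof essentially line for line. You pull out $\det S=-1$, collapse the two coefficient products to $\pm 2(i\nu k_2-\minf k_1)$ using $\nu^2-\minf^2=k_1^2+k_2^2$, and then eliminate $\widetilde f_\pm$ via the intertwining relations \eqref{eq:intertwining} so that only $W_H(k_2)$ remains; the one blemish is that your generic displayed formula for $\beta_\pm$ is garbled, but the concrete values $\beta_+=\nu+\minf-p$ and $\beta_-=\nu-\minf-\bar p$ that you actually use are the correct ones from \eqref{eq:gpmdef}.
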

\begin{proof}
The proof is a direct calculation similar to that of Theorem \ref{thm:1d_wronskians}.
\begin{align*}
    W_{\widehat{D}}(k_1,k_2) &= \frac{\det(S)}{4}\left[(\nu(k_1,k_2)+k_1+ik_2+\minf)(\nu(k_1,k_2)-k_1+ik_2-\minf)f_{+}(x_2,k_2)\widetilde{f}_{-}(x_2,k_2)\right.\\
    &\left.-(\nu(k_1,k_2)+k_1-ik_2-\minf)(\nu(k_1,k_2)-k_1-ik_2+\minf)f_{-}(x_2,k_2)\widetilde{f}_{+}(x_2,k_2) \right]\\
    &=\frac{(k_1\minf-i\nu(k_1,k_2)k_2)}{2}\left[f_{+}(x_2,k_2)\widetilde{f}_{-}(x_2,k_2)+f_{-}(x_2,k_2)\widetilde{f}_{+}(x_2,k_2) \right]\\
    &=\frac{(k_1\minf-i\nu(k_1,k_2)k_2)}{2}\left[f_{+}(x_2,k_2)\frac{A^{*}f_{-}(x_2,k_2)}{(ik_2-\minf)}+f_{-}(x_2,k_2)\frac{A^{*}f_{+}(x_2,k_2)}{(-ik_2+\minf)}\right]\\
    &=\frac{i\nu(k_1,k_2)k_2-k_1\minf}{2(ik_2-\minf)}\left[f_{+}(x_2,k_2)f_{-}'(x_2,k_2) -f_{-}(x_2,k_2)f_{+}'(x_2,k_2)\right]\\
    &=\frac{i\nu(k_1,k_2)k_2-k_1\minf}{2(ik_2-\minf)}W_{H}(k_2) \, .
\end{align*}
\end{proof}

\subsection{Resolvent and Time Evolution Operator}\label{sec:2d_resolvent}
In this section we describe the time evolution associated with the two-dimensional operator $D$. We  decompose the operator $e^{-it\hatD}$ using the projections $P_{\rm edge}$ and $P_{\rm bulk}$ defined in Definition \ref{def:projections}. Similarly to subsection \ref{sec:1d_resolvent}, we express the bulk component of the time evolution operator in terms of the Jost functions $g_{\pm}(x,k_1,k_2)$ for $(k_1,k_2)\in\R^2$. Let $P_{\rm bulk}^+\equiv P_{\rm bulk}\chi (D>0)$ be the projection onto the positive part of the bulk subspace of $D$, and fix $\gamma >0$. Then for $x,y\in\R^2$
\begin{align*}
    [e^{-itD}\langle D\rangle^{-\gamma}P_{\rm bulk}^+](x,y) = \int_{\R} e^{ik_1 (x_1-y_1)}\left[e^{-it\hatD}\langle \hatD\rangle^{-\gamma}P_{\rm bulk}^+\right](x_2,y_2)\frac{dk_1}{2\pi} \,.
\end{align*}
For $k_1\in\R$ and $z\notin \sigma(\hatD)$, define the (fiber) resolvent $R_{\widehat{D}}(k_1,z)\equiv(\hatD-z)^{-1}$. Through a similar process to the one-dimensional case, see Sec.\ \ref{sec:1d_resolvent}, we find for $z=\nu(k_1,k_2)$
\begin{align*}
    R_{\widehat{D}}(k_1,\nu(k_1,k_2))(x_2,y_2) = \frac{i}{W_{\widehat{D}}(k_1,k_2)}\begin{cases}
        g_{+}(x_2,k_1,k_2)g_{-}(y_2,k_1,k_2)^{\top} & x_2 > y_2\\
        g_{-}(x_2,k_1,k_2)g_{+}(y_2,k_1,k_2)^{\top} & x_2 < y_2
    \end{cases}.
\end{align*}
The limiting resolvents $R_{\pm}(k_1,k_2)(x_2,y_2) = R_{\widehat{D}}(k_1,\nu(k_1,k_2\pm i0))(x_2,y_2)$ are given by
\begin{align*}
    R_{\pm}(k_1,k_2)(x_2,y_2) = \frac{i}{W_{\widehat{D}}(k_1,\pm k_2)}\begin{cases}
        g_{+}(x_2,k_1,\pm k_2)g_{-}(y_2,k_1,\pm k_2)^{\top} & x_2 > y_2\\
        g_{-}(x_2,k_1,\pm k_2)g_{+}(y_2,k_1,\pm k_2)^{\top} & x_2 < y_2
    \end{cases}.
\end{align*}
Then by Stone's formula, ignoring the negative part of the spectrum
\begin{align*}
    \left[e^{-it\hatD}\langle \hatD\rangle^{-\gamma}P_{\rm bulk}^{+}\right]&(x_2,y_2)\\
    &=\frac{1}{2\pi i}\int_{\sqrt{\minf^2 + k_1^2}}^{\infty} e^{-itz }\langle z\rangle ^{-\gamma}\left[R_{\widehat{D}}(k_1,z+i0)-R_{\widehat{D}}(k_1,z-i0)\right](x_2,y_2)dz.
    \end{align*}
Making the change of variables $z = \nu(k_1,k_2)$ in the integral above we arrive at the formula
\begin{multline*}
   \left[e^{-it\hatD}\langle \hatD\rangle^{-\gamma}P_{\rm bulk}^{+}\right](x_2,y_2) \\ = \frac{1}{2\pi i}\int_{0}^{\infty} e^{-it\nu(k_1,k_2)}\left[R_{+}(k_1,k_2)-R_{-}(k_1,k_2)\right](x_2,y_2)\frac{k_2 dk_2}{\nu(k_1,k_2)^{\gamma+1}},
\end{multline*}
where we once again have replaced $\langle \nu(k_1,k_2)\rangle$ by $\nu(k_1,k_2)$. Using the formulas for $R_{\pm}(k_1,k_2)$ above, and making the change $k_2\to-k_2$ in the second integral, we observe that the propagator kernel can be expressed in terms of the Jost functions $g_{\pm}(x_2,k_1,k_2)$: for $x_2>y_2$
\begin{multline}\label{eq:2d_propagator}
    [e^{-itD}\langle D\rangle^{-\gamma}P_{\rm bulk}^{+}](x,y) = \\ \frac{1}{(2\pi)^2}\int_{\R^2}e^{ik_1(x_1-y_1)}e^{-it\nu(k_1,k_2)}\frac{g_{+}(x_2,k_1,k_2)g_{-}(y_2,k_1,k_2)^{\top}}{W_{\widehat{D}}(k_1,k_2)}\frac{k_2 dk_1dk_2}{\nu(k_1,k_2)^{\gamma+1}},
\end{multline}
whereas for $x_2<y_2$
\begin{multline*}
    [e^{-itD}\langle D\rangle^{-\gamma}P_{\rm bulk}^{+}](x,y) =\\ \frac{1}{(2\pi)^2}\int_{\R^2}e^{ik_1(x_1-y_1)}e^{-it\nu(k_1,k_2)}\frac{g_{-}(x_2,k_1,k_2)g_{+}(y_2,k_1,k_2)^{\top}}{W_{\widehat{D}}(k_1,k_2)}\frac{k_2 dk_1dk_2}{\nu(k_1,k_2)^{\gamma+1}}.
\end{multline*}
Moreover, the restriction to the edge states is given by
\begin{align*}
    [e^{-itD}\langle D\rangle^{-\gamma}P_{\rm edge}](x,y)&=\frac{1}{2\pi}\sum_{j=-N}^{N}\int_{\R}e^{ik_1(x_1-y_1)}e^{-it\nu_j(k_1)}\Phi_{j }(k_1,x_2)\Phi_{j }(k_1,y_2)^{\top}\frac{dk_1}{ \nu_j(k_1)^{\gamma}},
\end{align*}
with $\nu_j(k_1)$ given in Lemma \ref{lem:hatD_spectrum}.

\subsection{Dispersive Decay Estimates}\label{sec:2d_proofs}

Having developed the spectral theory of the two-dimensional Dirac operator $D$, see \eqref{eq:intro-2d-dirac}, we now turn to prove the associated unweighted (Theorem \ref{thm:2d_main_theorem}) and weighted (Theorem \ref{thm:2dweighted}) dispersive estimates.

First we define an analog of the edge-resonance, which is suitable to these settings. This is the main obstruction to integrable rates of decay, and is once again determined by the properties of the Schr\"odinger Wronskian. Crucially, since each fiber operator $\widehat{D}(k_1)$ has a threshold resonance at a different energy, we {\em cannot} define in these settings a threshold resonant {\em energy.}

From Theorem \ref{thm:2d_wronskians}, we can see that the Dirac Wronskian $W_{\widehat{D}}(k_1,k_2)$ always vanishes at the origin $k_1=k_2=0$. Even so, since this is an isolated point in the $(k_1, k_2)$-plane,  it poses no obstruction for dispersion. However, for $k_1\neq 0$, $W_{\widehat{D}}(k_1,0)$ is zero or nonzero independently of the value of $k_1$. Thus, either {\em all} fibers have a resonance, or none has. We thus define:
\begin{definition}\label{def:2dReg}
  A Hamiltonian $D$ is said to be {\bf regular} if $W_{\widehat{D}}(k_1,0)\neq 0$ for all $k_1\neq 0$.
\end{definition}
We remark on the significance of this condition to the weighted estimate in Remark \ref{rmk:2dweighted}.
\begin{theorem}\label{thm:2d_main_theorem}
Assume that $m^2(x)-\minf^2,  m'(x) \in L^1_1$ in the regular case and $m^2(x)-\minf^2,  m'(x) \in L^1_2$ in the non-regular case. Let $P_{0},P_{\rm edge},$ and $P_{\rm bulk}$ be the projections defined in \eqref{eq:Pj2dDef}--\eqref{eq:Pbulk2d}. Then
\begin{align}
    \|e^{-itD}\langle D\rangle^{-\gamma}(P_{\rm edge}-P_0)\|_{L^1\to L^{\infty}}&\lesssim \la t\ra^{-1/2}  \, , \qquad &\gamma > 3/2 \, ,  \label{eq:disp_pzero}\\
    \|e^{-itD}\langle D\rangle^{-\gamma}P_{ \rm bulk}\|_{L^1\to L^{\infty}}&\lesssim \la t\ra^{-1},\qquad &\gamma >2 \, .\label{eq:disp_pbulk}
\end{align}
\end{theorem}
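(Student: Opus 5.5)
The plan is to treat the two estimates separately, using the fiber decomposition of Sec.~\ref{sec:2d_resolvent}. For \eqref{eq:disp_pzero}, I will use the explicit kernel of $e^{-itD}\langle D\rangle^{-\gamma}P_j$ for $j\neq 0$:
\[
\frac{1}{2\pi}\int_\R e^{ik_1(x_1-y_1)-it\nu_j(k_1)}\Phi_j(k_1,x_2)\Phi_j(k_1,y_2)^\top\langle\nu_j(k_1)\rangle^{-\gamma}\,dk_1,
\]
with $\nu_j(k_1)=\pm\sqrt{a_j^2+k_1^2}$ and $a_j^2=\minf^2+\lambda_{|j|}>0$. The entries of $\Phi_j\Phi_j^\top$ are combinations of $\phi_{|j|}(x_2)$, $\pi_{|j|}(x_2)$ (bounded, being eigenfunctions of Schr\"odinger operators with bounded potentials) times symbols of the form $(k_1+\nu_j)^a/(\nu_j(\nu_j+k_1))$, $a\in\{0,1,2\}$. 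Here $\nu_j+k_1$ can be small for $k_1\to-\infty$ (when $\nu_j>0$), so I will rewrite $1/(\nu_j+k_1)=(\nu_j-k_1)/a_j^2$. Each amplitude is then a symbol of order $\le 0$ in $k_1$, and after absorbing $\langle\nu_j\rangle^{-\epsilon}$ Lemma~\ref{lem:hhat} puts it in $A_1(\R)$. The one-dimensional Klein--Gordon bound (Lemma~\ref{lem:KGdisp}) with mass $a_j$ and $\gamma-\epsilon>3/2$, together with the Fourier multiplication/Young argument of Theorem~\ref{thm:1d_unweighted}, gives $\langle t\rangle^{-1/2}$ uniformly in $x_2,y_2$. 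Since $N$ is finite, the sum over $j$ gives \eqref{eq:disp_pzero}.

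For \eqref{eq:disp_pbulk}, I start from \eqref{eq:2d_propagator} (case $x_2>y_2$; the other case and the negative spectrum are analogous). By \eqref{eq:gpmdef} and Theorem~\ref{thm:2d_wronskians}, the amplitude
\[
\frac{g_+g_-^\top k_2}{W_{\widehat D}\,\nu^{\gamma+1}}
\]
is a finite sum of terms
\[
e^{ik_2(x_2-y_2)}\varphi_+(x_2,k_2)\varphi_-(y_2,k_2)\cdot\frac{k_2(ik_2-\minf)}{W_H(k_2)}\cdot\frac{p(k_1,k_2)}{(i\nu k_2-k_1\minf)\,\nu^{\gamma+1}},
\]
with $p$ a product of two of the linear factors $\nu\pm k_1\pm ik_2\pm\minf$, together with the analogous terms containing $\widetilde f$. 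The aim is to show each term lies in $A(\R^2)$ uniformly in $x_2>y_2$ after factoring out $\nu^{-\gamma+\epsilon}$, and then apply the two-dimensional Klein--Gordon estimate $\|\mathcal F^{-1}(e^{-it\nu}\nu^{-\gamma+\epsilon})\|_{L^\infty(\R^2)}\lesssim\langle t\rangle^{-1}$ for $\gamma-\epsilon>2$ (Sec.~\ref{sec:KG}). The $k_2$-only factors are handled exactly as in Theorem~\ref{thm:1d_unweighted}. In the regions $y_2<0<x_2$ and $0<y_2<x_2$ (or $y_2<x_2<0$), \eqref{eq:DT_bounds}, Lemma~\ref{lem:wronratio} and Lemma~\ref{lem:badside} give $\mathcal FTV$ bounds in $k_2$. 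Tensoring with the identity in $k_1$ keeps them in $\mathcal FTV(\R^2)$, and the multiplicative structure lets me handle the rest by an $A(\R^2)$ bound on the remaining $(k_1,k_2)$ symbol.

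The main obstacle is the mixed factor $1/(i\nu k_2-k_1\minf)$. It vanishes only at $k_1=k_2=0$, where $|i\nu k_2-k_1\minf|\approx|k|$, but the numerator $k_2p$ need not vanish there to matching order. My approach is to multiply and divide by the conjugate:
\[
\frac{1}{i\nu k_2-k_1\minf}=\frac{-i\nu k_2-k_1\minf}{\nu^2k_2^2+k_1^2\minf^2}.
\]
Then I would check that the resulting symbol is smooth away from the origin, homogeneous of degree $0$ (or better) near the origin, and of order $-\gamma-1+O(1)$ at infinity. I expect the numerator $k_2p$ to supply at least one vanishing factor at the origin, since $g_\pm$ combinations like $\nu-\minf+\dots$ vanish there; this makes the symbol bounded and $C^\infty$ of degree $\geq 0$ near $0$. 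I would then prove a variant of Lemma~\ref{lem:hhat}: a function equal to a degree-$0$ homogeneous smooth function near the origin, plus a decaying symbol, lies in $A(\R^2)$ modulo constants. If that fails, the fallback is to localize dyadically near the origin and estimate the oscillatory integral directly by stationary phase. There the degree-$0$ singularity only costs a constant, because the $t^{-1}$ bound for a compactly supported smooth amplitude is stable under bounded homogeneous multipliers via polar coordinates and Van der Corput (Lemma~\ref{lem:vdc}) in the radial variable.
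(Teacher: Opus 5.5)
Your edge estimate \eqref{eq:disp_pzero} matches the paper. So does your reduction of \eqref{eq:disp_pbulk} to an $A(\R^2)$ bound, where the $x_2,y_2$-dependent Jost and Wronskian factors are handled by the one-dimensional lemmas and tensored into $\mF TV(\R^2)$. The gap is in the one genuinely new step, the mixed symbol $p/\big((i\nu k_2-k_1\minf)\nu^{1+\eps}\big)$, and both tools you propose for it fail.

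\begin{itemize}
\item Your variant of Lemma~\ref{lem:hhat} is false. Every element of $\mF TV(\R^2)$ is continuous, so a function that near the origin equals a nonconstant degree-$0$ homogeneous function is not in $A(\R^2)$, even modulo constants. What you need is that the singularity is removable, and you do not show this. The factor $k_2$ cannot supply the vanishing: it is used up by $W_H(k_2)$, which may vanish at $0$, and by $\psi_\pm$ on the bad side. The vanishing must come from $p$.
\item The symbol is not an isotropic symbol at infinity. Since $|i\nu k_2-k_1\minf|^2=(k_1^2+k_2^2)(\minf^2+k_2^2)$, your conjugate multiplication leaves a factor $(\minf^2+k_2^2)^{-1}$, which is constant in $k_1$. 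Its $k_2$-derivatives gain no powers of $\la k\ra$ as $|k_1|\to\infty$, so the bounds $|\nabla^N f|\les\la k\ra^{-\eps-N}$ fail.
\item Your grouping is also off: $k_2(ik_2-\minf)/W_H(k_2)$ grows linearly, so it is not in $\mF TV$. The factor $ik_2-\minf$ has to go with the mixed symbol instead.
\end{itemize}

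The missing idea is an exact factorization. Using $\nu-\minf=(k_1^2+k_2^2)/(\nu+\minf)$, the factor coming from $g_-$ is
\[
\nu-\minf\pm(k_1-ik_2)=(k_1-ik_2)\,\frac{k_1+ik_2\pm(\nu+\minf)}{\nu+\minf},
\]
and the mixed factor then becomes
\[
\frac{k_1-ik_2}{i\nu k_2-k_1\minf}=-\frac{\minf}{\minf^2+k_2^2}-\frac{ik_2}{\minf^2+k_2^2}\cdot\frac{k_1-ik_2}{\nu+\minf}.
\]
So the apparent singularity at the origin is removable. The full symbol, including $ik_2-\minf$, is a sum of products of two kinds of factors:
\begin{itemize}
\item functions of $k_2$ alone, which lie in $\mF TV(\R^2)$;
\item isotropic symbols of negative order, which lie in $A(\R^2)$ by Lemma~\ref{lem:hhat} once $\nu^{-\eps}$ is split between the factors.
\end{itemize}
The ideal property $\mF TV\cdot A\subset A$ then closes the argument.

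Your stationary-phase fallback does not replace this. It only treats a neighborhood of the origin and leaves the anisotropic behavior at infinity untouched. It would also have to hold uniformly over the linear phase shifts produced by the $x_2,y_2$-dependent factors, which are controlled only in $\mF TV$, not as smooth symbols.
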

\begin{proof}
For the edge part, \eqref{eq:disp_pzero}, fix $1\leq j\leq N$ (the negative branches are treated similarly) and consider the propagation of a single edge mode, $[e^{-itD}\langle D\rangle^{-\gamma}P_j](x,y)$. By Definition \ref{def:projections} we have
\begin{align*}
    [e^{-itD}\langle D\rangle^{-\gamma}P_j](x,y) &=\int_{\R}e^{ik_1(x_1-y_1)}e^{-it\nu_j(k_1)}\Phi_{j }(k_1,x_2)\Phi_{j }(k_1,y_2)^{\top}\frac{dk_1}{\la \nu_j(k_1)\ra^{\gamma}},
\end{align*}
which, by the definition of $\Phi_j(k_1,x_2)$ given in Lemma~\ref{lem:hatD_spectrum}, can be expressed as a $j$-dependent linear combination of integrals of the form
\begin{align*}
    \int_{\R}e^{ik_1(x_1-y_1)}e^{-it\nu_j(k_1)}\frac{(\nu_j(k_1)+k_1)^{n}}{\nu_j(k_1)}\frac{dk_1}{\la \nu_j(k_1)\ra^{\gamma}},\qquad n = -1,0,1,
\end{align*}
times $k_1$-independent functions of $x_2$ (the eigenfunctions of $H$ and $\widetilde{H}$), which are in $L^{\infty}(\R_{x_2})\cap L^2(\R_{x_2})$. As $\gamma > 3/2$, the integral above is bounded by $C_j\la t\ra^{-1/2}$ by an application of Lemma~\ref{lem:KGdisp} along with the fact that $\frac{(\nu_j(k_1)+k_1)^{n}}{\nu_j(k_1)^{1+\eps}}\in A(\R)$ for any $\eps>0$ when $n\leq 1$ by Lemma \ref{lem:hhat}. This completes the proof of the edge estimate, \eqref{eq:disp_pzero}.\\

Turning to the bulk estimate, \eqref{eq:disp_pbulk}: up to constants, and WLOG for $x_2<y_2$,
\begin{align*}
      [e^{-itD}\langle D\rangle^{-\gamma}P_{\rm bulk}^{+}](x,y)&=\int_{\R^2}e^{ik_1(x_1-y_1)}e^{-it\nu(k_1,k_2)}\frac{g_{-}(x_2,k_1,k_2)g_{+}(y_2,k_1,k_2)^{\top}}{W_{\widehat{D}}(k_1,k_2)}\frac{k_2 dk_1dk_2}{\nu(k_1,k_2)^{\gamma+1}}.
\end{align*}
Now fix $\eps >0$ and define the function $F(x_2,y_2,k_1,k_2)$ as 
\begin{align*}
    F(x_2,y_2,k_1,k_2) = \frac{k_2 g_{-}(x_2,k_1,k_2)g_{+}(y_2,k_1,k_2)}{W_{\widehat{D}}(k_1,k_2)\nu(k_1,k_2)^{1+\eps}}.
\end{align*}
To complete the dispersive estimate \eqref{eq:disp_pbulk}, we use a similar argument to the one employed in Sec.\ \ref{sec:1d_estimates}:  let $\gamma > 2$ and choose $\eps>0$ so that $\gamma -\eps>2$. Using Fourier multiplication, for $x_2<y_2$, we can bound \eqref{eq:2d_propagator} by  
\begin{multline}\label{eq:FourierMult2d}
    \|e^{-itD}\langle D\rangle^{-\gamma}P_{\rm bulk}^+\|_{L^1\to L^{\infty}}\leq \\ \sup\limits_{x_1,y_1}\Big\| \mathcal F^{-1}_{k\to \xi}\Big(\frac{e^{ik_1(x_1-y_1)}e^{-it\nu(k_1,k_2)} }{\nu(k_1,k_2)^{\gamma-\epsilon} } \Big) \Big\|_{L^\infty_{\xi}} \sup\limits_{x_2,y_2}\Big\|F(x_2,y_2,k_1,k_2)  \Big\|_{A(\R^2_k)}.
\end{multline}

The first factor is just the decay by the Klein-Gordon dispersion relation, and so by  Lemma~\ref{lem:KGdisp}, and noting that the oscillatory factors in the first term do not alter the $L^{\infty}$ norm of the Fourier transform, we obtain the bound $\la t\ra^{-1}$ provided that $\gamma-\epsilon>2$  and $\epsilon>0$. All that remains is to prove that $F(x_2,y_2,\cdot ,\cdot)$ is in $A(\R^2)$ for every fixed $(x_2,y_2)$ (i.e., the Fourier transform in $k_1,k_2$ is in $L^1$), and that $\|F(x_2,y_2,\cdot ,\cdot)\|_{A(\R^2)}$ is uniformly bounded in $(x_2,y_2)$.

To see that, first note that by Theorem~\ref{thm:2d_wronskians} we have
\begin{align*}
     W_{\widehat{D}}(k_1,k_2) &=\frac{i\nu(k_1,k_2)k_2-k_1\minf}{2(ik_2-\minf)}W_{H}(k_2),
\end{align*}
where $W_{H}(k_2)=W[f_{+}(\cdot,k_2),f_{-}(\cdot,k_2)]$ is the Schr\"odinger Wronskian associated to $H$. In turn, $F(x_2,y_2,k_1,k_2)$ may be expressed
\begin{align*}
     F(x_2,y_2,k_1,k_2) = \frac{2k_2(ik_2-\minf) g_{-}(x_2,k_1,k_2)g_{+}(y_2,k_1,k_2)}{W_{H}(k_2)(i\nu(k_1,k_2)k_2-k_1\minf)\nu(k_1,k_2)^{1+\eps}}.
\end{align*}
Using \eqref{eq:gpmdef}, this can, up to constants, be expressed as the product 
\begin{align*}
    F(x_2,y_2,k_1,k_2) = e^{-ik_2(y_2-x_2)}\frac{k_2\varphi_{-}(x_2,k_2)\varphi_{+}(y_2,k_2)}{W_H(k_2)}h(k_1,k_2)\, ,
\end{align*}
where the functions $h(k_1,k_2)$ are of the form

\begin{align*}
   h(k_1,k_2)= \frac{ (ik_2-\minf) [\nu(k_1,k_2)+ m_{\infty} \pm (k_1+ik_2)] [\nu(k_1,k_2)- m_{\infty}\pm (k_1-ik_2)]  }{(i\nu(k_1,k_2)k_2-k_1\minf)\nu(k_1,k_2)^{1+\eps}}.
\end{align*}
In the proof of Theorem~\ref{thm:1d_unweighted}, we showed that, uniformly in $x_2,y_2$, $$\frac{k_2\varphi_{-}(x_2,k_2)\varphi_{+}(y_2,k_2)}{W_H(k_2)}\in \mF TV(\R_{k_2}).
$$
Therefore, it suffices to prove that $h\in A(\R^2)$. Note that 
\begin{equation}\label{eq:nu-m}\nu(k_1,k_2)- m_{\infty} =\frac{k_1^2+k_2^2}{\nu(k_1,k_2)+m_{\infty}}.  
\end{equation}
 Therefore, $h$ is the product of 
 \begin{align*}
& \frac{ (ik_2-\minf) (k_1-ik_2)}{(i\nu(k_1,k_2)k_2-k_1\minf) \nu(k_1,k_2)^{ \frac{\eps}2}} \, \text{ and } \, \\ &\frac{[\nu(k_1,k_2)+ m_{\infty} \pm (k_1+ik_2)] [k_1+ik_2\pm (\nu(k_1,k_2)+m_{\infty})]  }{(\nu(k_1,k_2)+\minf) \nu(k_1,k_2)^{1+\frac{\eps}2}}.
\end{align*}
The second factor is in $A(\R^2)$ by Lemma~\ref{lem:hhat} below. Note that by direct computation and \eqref{eq:nu-m}, we have 
$$
 \frac{1}{i\nu(k_1,k_2)k_2-k_1\minf}= \frac{-i\nu(k_1,k_2)k_2-k_1\minf}{(k_1^2+k_2^2)(\minf^2+k_2^2)}=\frac{-\minf(k_1+ik_2)-i(\nu(k_1,k_2)-\minf)k_2}{(k_1^2+k_2^2)(\minf^2+k_2^2)}
 $$
$$
=-\frac{\minf(k_1+ik_2)}{(k_1^2+k_2^2)(\minf^2+k_2^2)} -i \frac{k_2}{(\minf^2+k_2^2)(\nu(k_1,k_2)+\minf) }
$$
Therefore, we can rewrite the first factor in $h$ as  
$$
  \frac{ -\minf(ik_2-\minf) }{ (\minf^2+k_2^2) \nu(k_1,k_2)^{ \frac{\eps}2} }-i \frac{k_2 (ik_2-\minf)}{(\minf^2+k_2^2)}\frac{ (k_1-ik_2)}{(\nu(k_1,k_2)+\minf)  \nu(k_1,k_2)^{ \frac{\eps}2}}.
$$ 
Using Lemma~\ref{lem:hhat}, we see that the first summand is in $A(\R^2)$, the first factor of the second summand is in $\mF TV(\R^2)$, and the second factor is in $A(\R^2)$.   This finishes the proof of $h\in A(\R^2)$. 
\end{proof}

For regular cases, in the sense of Definition \ref{def:2dReg}, we have the following improved weighted estimate:
\begin{theorem}\label{thm:2dweighted}
Assume that $m^2(x)-\minf^2,  m'(x) \in L^1_3$ and that $D$ is regular. Then   for  $\gamma> 3$
\begin{align*}
    \|\la x_2\ra^{-1} e^{-itD}\langle D\rangle^{-\gamma}P_{\rm bulk}(x,y) \la y_2\ra^{-1}\|_{L^1\to L^\infty}\les  \langle t\rangle ^{-2} \, .
\end{align*}
\end{theorem}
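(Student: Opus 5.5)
The plan is to mimic the proof of Theorem~\ref{thm:1dWeighted}, now in two frequency variables, on top of the representation \eqref{eq:2d_propagator}. We treat $P^+_{\rm bulk}$ with $x_2<y_2$; the other cases are symmetric. Write
\[
e^{-it\nu(k_1,k_2)}\frac{k_2}{\nu(k_1,k_2)}=\frac{i}{t}\partial_{k_2}e^{-it\nu(k_1,k_2)}
\]
and integrate by parts once in $k_2$. The kernel becomes $\frac{1}{t}$ times
\[
\int_{\R^2}e^{ik_1(x_1-y_1)}e^{-it\nu}\,\partial_{k_2}\Big(\frac{g_-(x_2,k_1,k_2)g_+(y_2,k_1,k_2)^{\top}}{W_{\widehat D}(k_1,k_2)\,\nu^{\gamma}}\Big)\,dk_1dk_2 .
\]
No boundary terms arise. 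Regularity (Definition~\ref{def:2dReg}) ensures that $W_{\widehat D}$ vanishes only at the origin, so the amplitude is locally integrable there.

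By Fourier multiplication as in \eqref{eq:FourierMult2d}, the kernel is bounded by
\[
t^{-1}\,\big\|\mathcal F^{-1}(e^{-it\nu}\nu^{-2-})\big\|_{L^\infty}\;\Big\|\nu^{2+}\partial_{k_2}\Big(\frac{g_-g_+^{\top}}{W_{\widehat D}\,\nu^{\gamma}}\Big)\Big\|_{A(\R^2)}.
\]
The first factor is $\lesssim\la t\ra^{-1}$ by the two-dimensional Klein--Gordon bound, Lemma~\ref{lem:KGdisp}. So it remains to show that the second factor is $\lesssim\la x_2\ra\la y_2\ra$ uniformly in $x_2,y_2$. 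For $t\lesssim 1$ we simply use Theorem~\ref{thm:2d_main_theorem}.

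To bound the amplitude, we use Theorem~\ref{thm:2d_wronskians} and \eqref{eq:gpmdef}, as in the unweighted proof. The amplitude is then a sum of terms of the form
\[
\partial_{k_2}\Big[e^{ik_2(y_2-x_2)}\varphi_-(x_2,k_2)\varphi_+(y_2,k_2)\cdot\frac{ik_2-\minf}{W_H(k_2)}\cdot\frac{q(k_1,k_2)}{(i\nu k_2-k_1\minf)\,\nu^{\gamma+1}}\Big],
\]
with $q$ the explicit quadratic numerators. The Leibniz rule splits this into three groups of factors.
\begin{itemize}
\item The Jost group is controlled by \eqref{Wcl1}: it contributes $\la x_2\ra\la y_2\ra$ in $\mF TV(\R_{k_2})$, with and without one derivative (this is where $L^1_3$ and Lemma~\ref{lem:badside} enter).
\item The factor $(ik_2-\minf)/W_H(k_2)$ and its $k_2$-derivative lie in $A_1(\R)$ by \eqref{Wcl2}; this uses $W_H(0)\ne0$, which is exactly regularity.
\item The explicit symbol is handled by Lemma~\ref{lem:hhat}, after the same algebraic decomposition of $1/(i\nu k_2-k_1\minf)$ used in the proof of Theorem~\ref{thm:2d_main_theorem}.
\end{itemize}
Functions of $k_2$ alone embed into $\mF TV(\R^2)$ with a $\delta(\xi_1)$ factor, so products with an $A(\R^2)$ factor stay in $A(\R^2)$.

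The main obstacle is the explicit symbol near $(k_1,k_2)=0$. In the unweighted proof the singular piece
\[
\frac{\minf(k_1+ik_2)}{(k_1^2+k_2^2)(\minf^2+k_2^2)}
\]
was cancelled by the numerator factor $(k_1-ik_2)$. After differentiating in $k_2$, a term homogeneous of degree $-1$ may survive, e.g.\ $\partial_{k_2}\big[(k_1-ik_2)/(k_1+ik_2)\big]$-type expressions. Such a term is not in $A(\R^2)$; its Fourier transform decays like $|\xi|^{-1}$. The first thing I would try is to avoid differentiating the unimodular ratio $(k_1-ik_2)/(k_1+ik_2)$ at all. Instead, I would show that after the cancellation $h$ depends on this ratio only through combinations smooth at the origin, using $q$ and the fact that the numerator $[\nu-\minf\pm(k_1-ik_2)]$ vanishes to second order. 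Failing that, I would split with a cutoff $\chi(k)$: away from the origin Lemma~\ref{lem:hhat} applies. Near the origin I would not integrate by parts, and instead estimate the low-frequency piece directly by stationary phase at $k=0$. There $\nu$ has a nondegenerate Hessian, and an amplitude of size $|k|^{-1}\cdot|k|$ (from the factor $k_2$) gives an extra $t^{-1}$ beyond the $t^{-1}$ Hessian factor, producing $t^{-2}$. This uses that the amplitude vanishes at the critical point $k=0$ thanks to the explicit factor $k_2$. The requirement $\gamma>3$ (rather than $2$) accounts for the one extra power of $\nu$ lost to the derivative at high frequency.
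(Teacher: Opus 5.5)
Your main line is exactly the paper's proof, and it goes through: integrate by parts once in $k_2$, apply Fourier multiplication against the two-dimensional Klein--Gordon kernel (Lemma~\ref{lem:KGdisp}), and bound $\nu^{2+}\partial_{k_2}(\cdots)$ in $A(\R^2)$ via the Leibniz rule, \eqref{Wcl1}, \eqref{Wcl2}, Lemma~\ref{lem:hhat} and the embedding $\mF TV(\R_{k_2})\subset\mF TV(\R^2)$. However, the ``main obstacle'' you raise does not exist, and the stationary-phase fallback should be discarded.

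The cancellation at the origin is an exact algebraic identity, not a leading-order one. The numerator $\nu-\minf\pm(k_1-ik_2)$ vanishes only to first order, not second. Since $\nu-\minf=(k_1-ik_2)(k_1+ik_2)/(\nu+\minf)$, it factors exactly as
\[
\nu-\minf\pm(k_1-ik_2)=\frac{(k_1-ik_2)\,[\,k_1+ik_2\pm(\nu+\minf)\,]}{\nu+\minf},
\]
and then
\[
\frac{k_1-ik_2}{i\nu k_2-k_1\minf}=-\frac{\minf}{\minf^2+k_2^2}-\frac{i\,k_2\,(k_1-ik_2)}{(\minf^2+k_2^2)(\nu+\minf)} .
\]
Hence $h$ is identically a sum of products of functions of $k_2$ alone (in $\mF TV(\R^2)$) with smooth symbols on $\R^2$. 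The ratio $(k_1-ik_2)/(k_1+ik_2)$ never appears, and differentiating this expression in $k_2$ produces no degree $-1$ term. Equivalently, the zero of $W_{\widehat{D}}$ at $k=0$ is exactly cancelled by $g_-(x_2,0,0)=0$. This is precisely the paper's argument, so your ``first thing to try'' is the proof.

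The fallback would not work as written. An amplitude of size $|k|^{-1}\cdot|k|$ is bounded but does not vanish at $k=0$, and a degree-$0$ homogeneous amplitude at a nondegenerate critical point gives in general only $t^{-1}$. Moreover, once the linear phases $k_1(x_1-y_1)$ and $k_2(y_2-x_2)$ are included, the critical point is not at the origin. The estimate must also be uniform in the unweighted variable $x_1-y_1$, which the Wiener-algebra route gives for free.

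Finally, your accounting of $\gamma>3$ is off. The symbol is $q/((i\nu k_2-k_1\minf)\nu^{\gamma})$, not $\nu^{\gamma+1}$: in two dimensions the Wronskian's $\nu$ sits inside $i\nu k_2-k_1\minf$. Also, $\partial_{k_2}$ improves rather than worsens symbol decay. The extra power is needed because the integration by parts removes the factor $k_2/\nu$. Along the $k_1$-axis, $1/(i\nu k_2-k_1\minf)\approx-1/(k_1\minf)$, so $h$ in general decays only like $\nu^{1-\gamma}$ there. The Leibniz terms in which $h$ is left undifferentiated therefore require $\nu^{2+}h\in A(\R^2)$, i.e.\ $\gamma>3$.
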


\begin{rmk}\label{rmk:2dweighted}
The additional factor of $t^{-1}$ in the weighted estimate, Theorem~\ref{thm:2dweighted}, is notable; indeed, in the case of the two-dimensional massive Dirac equation ($m\equiv {\rm const}$) with a   short-range potential, the rate improvement is only logarithmic, leading to a $t^{-1}(\log t)^{-2}$ decay \cite{erdougan2018dispersive}. This latter improvement is the result of a logarithmic singularity of the
two-dimensional free Schr{\"o}dinger resolvent; see
\cite[Theorem~1.3 and Section~4.1]{erdougan2018dispersive} for details.     

In the domain wall settings, the gain is due to a different mechanism, the fiber decomposition. As noted, by the explicit form of the (two-dimensional) Dirac Wronskian, \eqref{eq:2d_wroskian}, either all or none of the fiber operators, $\widehat{D}(k_1)$, have a threshold resonance. When that resonance occur, for each $k_1$ we get a 
non-decaying generalized eigenfunction of $\widehat{D}(k_1)$ at $\nu(k_1, 0)$ energy. Superposition over $k_1$ therefore yields
generalized wave packets which remain non-localized in $x_2$ and
disperse only along the interface, thus obstructing faster weighted decay (and are not influenced by $\langle x_2 \rangle ^{-1}$ weights). Technically, this obstruction
appears as a $k_2^{-1}$ singularity in the fiber resolvent amplitude,
which cancels the factor $k_2$ arising from the change of variables
$d\lambda=k_2\nu(k_1,k_2)^{-1}\,dk_2$; see \eqref{eq:weighted_2dPro}. In the absence of such a resonance, we integrate by parts in the $k_2$-variable, yielding a factor of $t^{-1}$, whereby
the remaining two-dimensional Klein-Gordon integral gives the usual
$t^{-1}$ decay, as in the unweighted estimate (Theorem~\ref{thm:2d_main_theorem}).
\end{rmk}

As noted in the introduction, the proof in the two-dimensional case follows the strategy of the weighted one-dimensional estimate, Theorem \ref{thm:1dWeighted}, with some key distinctions.
\begin{proof}
Recall from the proof of the previous theorem that up to constants, for $x_2<y_2$
\begin{multline}\label{eq:weighted_2dPro}
      [e^{-itD}\langle D\rangle^{-\gamma}P_{\rm bulk}^{+}](x,y) = \\ \int_{\R^2}e^{ik_1(x_1-y_1)} \frac{k_2 e^{-it\nu(k_1,k_2)}}{\nu(k_1,k_2) } \frac{g_{-}(x_2,k_1,k_2)g_{+}(y_2,k_1,k_2)^{\top}}{W_{\widehat{D}}(k_1,k_2)\nu(k_1,k_2)^{\gamma }}  dk_1dk_2.
\end{multline}
Writing $e^{-it\nu(k_1,k_2)}\frac{k_2}{\nu(k_1,k_2)}=\frac{i}{t}\partial_{k_2} e^{-it\nu(k_1,k_2)}$ and integrating by parts in the $k_2$ variable, we obtain 
\begin{multline}\label{eq:prop_preFourier_2dw}
  [e^{-itD}\langle D\rangle^{-\gamma}P_{\rm bulk}^{+}](x,y) = \\ \frac{1}{  i t }\int_{\R^2}e^{ik_1(x_1-y_1)}   e^{-it\nu(k_1,k_2)}\partial_{k_2}\Big(\frac{g_{-}(x_2,k_1,k_2)g_{+}(y_2,k_1,k_2)^{\top}}{W_{\widehat{D}}(k_1,k_2)\nu(k_1,k_2)^{\gamma }}\Big)  dk_1dk_2. 
\end{multline}
 As in the proof of Theorem \ref{thm:2d_main_theorem} (in particular, \eqref{eq:FourierMult2d}), we apply the Fourier multiplication formula to \eqref{eq:prop_preFourier_2dw}. Here, we get the extra $t^{-1}$ decay from the Klein-Gordon kernel estimates (Sec.\ \ref{sec:KG}), and  the theorem follows from the following claim:
\begin{equation}\label{eq:2dweightclaim}
 \Big\|\nu(k_1,k_2)^{2+} \partial_{k_2}\Big(\frac{g_{-}(x_2,k_1,k_2)g_{+}(y_2,k_1,k_2)^{\top}}{W_{\widehat{D}}(k_1,k_2)\nu(k_1,k_2)^{\gamma }}\Big)\Big\|_{A(\R^2)}\les \la x_2\ra\la y_2\ra.
 \end{equation} 
 We can rewrite the function above as 
\begin{align*}
   \nu(k_1,k_2)^{2+} \partial_{k_2}\Big(e^{-ik_2(y_2-x_2)}\varphi_{-}(x_2,k_2)\varphi_{+}(y_2,k_2)\frac{ (ik_2-\minf)}{W_H(k_2)}h(k_1,k_2)\Big),
\end{align*}
where the functions $h(k_1,k_2)$ are of the form (by a slight modification of the calculations in the proof of Theorem~\ref{thm:2d_main_theorem})
\begin{multline*}
   h(k_1,k_2)= \frac{  [\nu(k_1,k_2)+ m_{\infty} \pm (k_1+ik_2)] [\nu(k_1,k_2)- m_{\infty}\pm (k_1-ik_2)]  }{(i\nu(k_1,k_2)k_2-k_1\minf)\nu(k_1,k_2)^{\gamma }}\\=\frac{   (k_1-ik_2)}{(i\nu(k_1,k_2)k_2-k_1\minf)  }    \frac{[\nu(k_1,k_2)+ m_{\infty} \pm (k_1+ik_2)] [k_1+ik_2\pm (\nu(k_1,k_2)+m_{\infty})]  }{(\nu(k_1,k_2)+\minf) \nu(k_1,k_2)^{\gamma }} \\ = \Big(\frac{ -\minf  }{ (\minf^2+k_2^2)   }-i \frac{k_2  }{(\minf^2+k_2^2)}\frac{ (k_1-ik_2)}{(\nu(k_1,k_2)+\minf)   }\Big) \times\\ \frac{[\nu(k_1,k_2)+ m_{\infty} \pm (k_1+ik_2)] [k_1+ik_2\pm (\nu(k_1,k_2)+m_{\infty})]  }{(\nu(k_1,k_2)+\minf) \nu(k_1,k_2)^{\gamma }},
\end{multline*} 
which shows that $h(k_1,k_2)$ is well-behaved near the origin. Now, the claim follows from similar estimates to the one-dimensional weighted estimates, \eqref{Wcl1} and \eqref{Wcl2}, and the following implications of Lemma~\ref{lem:hhat}:
$$\big\|\nu(k_1,k_2)^{2+}   h(k_1,k_2) \big\|_{A(\R^2)} \les 1 \, , \qquad \big\|\nu(k_1,k_2)^{2+} \partial_{k_2}  h(k_1,k_2) \big\|_{A(\R^2)} \les 1
$$
provided that   $\gamma>3$.
\end{proof}

\section{Dispersive Estimates for two-dimensional Dirac operators with an unbounded domain wall}\label{sec:unbd}

In this section we obtain dispersive estimates for the two-dimensional operator
\begin{align*}
    D = -i\partial_{x_1}\sigma_1-i\partial_{x_2}\sigma_2+m(x_2)\sigma_3,
\end{align*}
with an {\em unbounded} domain wall $m$, i.e.,  $m\in C^1(\R)$, $\displaystyle\lim_{x_2\to\pm\infty}m(x_2)=\pm \infty$, and
$$\lim_{x_2\to\pm\infty}(m^2(x_2)\pm m'(x_2))= \infty \, .$$
It may seem natural to decompose $D$ into its fibers $\widehat D(k_1)$ (as we did in the bounded case in Sec.~\ref{sec:2d_bounded_estimates}), and then decompose each fiber into its $x_2$ eigenbasis. In our experience, it is better to use an eigenbasis in $x_2$, which keeps the $x_1$-one-dimensional Schr{\"o}dinger structure of each eigenfunction. Concretely, for $z\in\C\setminus\R$ we have
\begin{subequations}\label{eq:dirac_res}
\begin{align}
(D-z)(D+z)=D^2-z^2 =  (-\partial_{x_1}^2-z^2)I +H_{x_2} \, ,
\end{align}
where 
\begin{equation}
    H_{x_2}\equiv \begin{pmatrix} -\partial_{x_2}^2+m^2(x_2) & m'(x_2)\\ m'(x_2) & -\partial_{x_2}^2+m^2(x_2) \end{pmatrix} \, .
\end{equation}
\end{subequations}
We note that 
$$ S H_{x_2} S = \begin{pmatrix}
    H_+ &0 \\ 0 & H_-
\end{pmatrix} , \quad {\rm where} \quad S=\frac{1}{\sqrt{2}}\begin{pmatrix}
    1 &1 \\ 1 & -1
\end{pmatrix} , \quad  H_{\pm} = -\partial_{x_2}^2+m^2(x_2) \pm m'(x_2) .$$
Note that $ H_{+} = AA^*$ and $H_{-} = A^*A$ where the raising and lowering operators are defined as in \eqref{def:AA*}. These one-dimensional Schr\"odinger Hamiltonians are analogous to those defined in Sec.\ \ref{sec:SUSY}; however, they are non-negative with trapping potentials $V_{\pm}(x_2)\equiv m^2(x_2)\pm m'(x_2)$ which are asymptotically infinite as $x_2\to\pm\infty$. As each of $H_{\pm}$ has compact resolvent, and since we have the relation
$$\sigma(H_{-})=\sigma(H_{+})\cup\{0\},\quad 0\notin \sigma(H_+) \, ,$$
we have the following lemma which describes the spectrum of $H_{x_2}$. Its proof follows from the properties of $H_{\pm}$, and a direct calculation using the diagonal form of $SH_{x_2}S$ above.
\begin{lemma}\label{lem:Hx2}
    The operator $H_{x_2}$ has compact resolvent with eigenvalues given by $\sigma(H_{-})$. 
     Denote by $H_{-}\phi_{n}=\lambda_{n}\phi_{n}$ the normalized eigenpairs of $H_{-}$ with the convention that $\lambda_{0} = 0$, and $\pi_n = \dfrac{1}{\sqrt{\lambda_n}}A\phi_n$ for $n\geq 1$. The corresponding eigenspaces of $H_{x_2}$ are therefore
    \begin{align*}
        E_{0} &\equiv  \ker(H_{x_2}) = span\left\{ \Phi_0 \right\} \,,\\
        E_{n} &\equiv\ker(H_{x_2}-\lambda_n)= span\left\{ \Phi_n (x_2), \Psi _n (x_2)\right\} \, ,\qquad n\in \mathbb{N} \, ,
    \end{align*}
where we used the shorthand
\begin{equation}
    \Phi_n (x_2) \equiv \frac{1}{\sqrt{2}}\begin{pmatrix}
            \phi_n (x_2) \\ -\phi_n(x_2)
        \end{pmatrix} \,  , \qquad \Psi_n (x_2) \equiv \frac{1}{\sqrt{2}}\begin{pmatrix}
            \pi_n (x_2) \\ \pi_n(x_2)
        \end{pmatrix}  \, .
\end{equation}
\end{lemma}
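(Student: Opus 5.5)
The plan is to conjugate by $S$ and read everything off from the diagonal operators $H_+$ and $H_-$. By the identity $SH_{x_2}S=\mathrm{diag}(H_+,H_-)$ and $S=S^{-1}=S^*$, the operator $H_{x_2}$ is unitarily equivalent to $H_+\oplus H_-$. Each $H_\pm$ has compact resolvent, since its potential $m^2\pm m'$ tends to $+\infty$ at both ends. Hence $H_{x_2}$ has compact resolvent, and its spectrum is $\sigma(H_+)\cup\sigma(H_-)$. By the stated relation $\sigma(H_-)=\sigma(H_+)\cup\{0\}$, this union equals $\sigma(H_-)$.

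Next I would identify eigenvectors. If $H_-\phi_n=\lambda_n\phi_n$, then $S(0,\phi_n)^\top=\frac{1}{\sqrt2}(\phi_n,-\phi_n)^\top=\Phi_n$ is an eigenvector of $H_{x_2}$ with eigenvalue $\lambda_n$. For $n\ge1$, set $\pi_n=\lambda_n^{-1/2}A\phi_n$. Then
\[
H_+\pi_n=AA^*A\phi_n\lambda_n^{-1/2}=\lambda_n\pi_n ,
\]
and $\|\pi_n\|^2=\lambda_n^{-1}\langle A^*A\phi_n,\phi_n\rangle=1$. So $S(\pi_n,0)^\top=\Psi_n$ is an eigenvector with the same eigenvalue $\lambda_n$. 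For $n=0$ we have $A\phi_0=0$, because $\langle A^*A\phi_0,\phi_0\rangle=\|A\phi_0\|^2=0$, and $0\notin\sigma(H_+)$. Therefore $\ker H_{x_2}=S(\ker H_+\oplus\ker H_-)=\mathrm{span}\{\Phi_0\}$.

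To get exactly the stated eigenspaces for $n\ge1$, I need each eigenvalue of $H_\pm$ to be simple. This holds because $H_\pm$ are one-dimensional Schr\"odinger operators with confining potentials: any $L^2$ solution decays at both ends, so two eigenfunctions with the same eigenvalue have vanishing Wronskian and are linearly dependent. I also need the map $\phi\mapsto\lambda^{-1/2}A\phi$ to be a bijection from $\ker(H_--\lambda)$ onto $\ker(H_+-\lambda)$ for $\lambda>0$. Its inverse is $\psi\mapsto\lambda^{-1/2}A^*\psi$, which gives $\ker(H_+-\lambda)=\mathrm{span}\{\pi_n\}$. It follows that
\[
\ker(H_{x_2}-\lambda_n)=S\big(\mathrm{span}\{\pi_n\}\oplus\mathrm{span}\{\phi_n\}\big)=\mathrm{span}\{\Psi_n,\Phi_n\}.
\]

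The main point needing care is the justification of compact resolvent and of the spectral relation for the unbounded potentials, in particular self-adjointness of $H_\pm$ as $AA^*$ and $A^*A$ on their natural domains. If the statement is taken as given, as the paper does, this reduces to standard facts: a confining potential bounded below yields compact resolvent by Rellich's criterion. The rest is the direct algebra above.
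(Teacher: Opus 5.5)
Your proposal is correct and follows the paper's route: conjugate by $S$ to $H_+\oplus H_-$, then use the compact resolvent of $H_\pm$, the relation $\sigma(H_-)=\sigma(H_+)\cup\{0\}$ with $0\notin\sigma(H_+)$, and the map $\phi_n\mapsto\lambda_n^{-1/2}A\phi_n$ to produce the eigenvectors. You also supply the details the paper leaves to a ``direct calculation'': the normalization of $\pi_n$, simplicity of the eigenvalues, and the bijection between $\ker(H_--\lambda)$ and $\ker(H_+-\lambda)$ for $\lambda>0$.
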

Lemma \ref{lem:Hx2} allows us to expand the resolvent of the Dirac operator, $D$, in terms of the eigenfunctions $\Phi_n$ and $\Psi_n$.
Any $f\in L^2(\R^2)$ can be written as 
$$ f(x_1,x_2) = b_0 (x_1)\Phi_0 (x_2) + \sum\limits_{n\geq 1}\left(a_n (x_1) \Psi_n (x_2) +b_n (x_1)\Phi_n (x_2)\right) , $$
 where $b_n (x_1) =\langle \Phi_n, f(x_1,\cdot)\rangle_{L^2 (\R_{x_2} )}$, and similarly for the $a_n$'s. 
  Now, for ease of notation, let us reorganize 
$$ f_{2n-1} = a_n \, , \qquad f_{2n}= b_n\,  , \qquad \Gamma_{2n-1} = \Psi_n  \, , \qquad \Gamma_{2n}=\Phi_n  \, .$$
Then the eigenvalue corresponding to $\Gamma_k$, $k=0,1,\ldots ,$ is $\mu_k \equiv \lambda_{\lceil k/2 \rceil }$.

The mixed Lebesgue norms $L_{x_1}^p L_{x_2}^2$ may be computed as
$$
\|f\|_{L^p_{x_1} L^2_{x_2}}=  \Big\|\Big(\sum_{k=0}^\infty |f_k(x_1) |^2\Big)^{1/2}\Big\|_{L^p_{x_1}} \, ,
$$ 
by Parseval's Theorem in $L^2_{x_2}$. Hence, by \eqref{eq:dirac_res} and Lemma~\ref{lem:Hx2}
\begin{align*}
    [(D^2-z^2)f](x_1,x_2) &= \sum\limits_{k=0}^{\infty} \Gamma_k(x_2)  \left( -\partial_{x_1}^2-z^2 + \mu_k\right)f_k(x_1)  .
    \end{align*}
    Thus, denoting the free Schr{\"o}dinger resolvent $R_0 (E)\equiv (-\partial_{x_1}^2-E)^{-1}$, we have that
    \begin{align*}
        (D^2-z^2)^{-1} f &= \sum\limits_{k=0}^{\infty} \Gamma_k R_0 (z^2-\mu_k)f_k  \, , \qquad \qquad {\rm and}~{\rm so} \\
        (D-z)^{-1}f &= (D+z)(D^2-z^2)^{-1} f=\left( D+z\right)\sum\limits_{k=0}^{\infty} \Gamma_k R_0 (z^2-\mu_k)f_k \\
        &=  \sum\limits_{k=0}^{\infty} \left( -i\partial_{x_2} \sigma_2 + m(x_2)\sigma_3\right)\Gamma _k \left[ R_0 (z^2-\mu_k)f_k\right] + \sum\limits_{k=0}^{\infty} (-i\partial_{x_1} \sigma _1 +z)\Gamma_k R_0 (z^2-\mu_k )f_k \\ 
        &= \sum\limits_{k=1}^{\infty} \mu_k ^{1/2}\Gamma _{k-(-1)^k} \left[ R_0 (z^2-\mu_k)f_k\right] +  \sum\limits_{k=0}^{\infty} \Gamma_k( i(-1)^k\partial_{x_1}  +z)R_0 (z^2-\mu_k )f_k \numberthis \label{eq:2dRaisingTrick} \\ &\equiv
        R_{D,1}(z)f + R_{D,2}(z)f,\numberthis \label{eq:unbdm_res}
    \end{align*}
where, in deriving \eqref{eq:2dRaisingTrick}, we simplify $R_{D,1}$ using the coordinate representation of $\Phi_n$ and $\Psi_n$. For $k$ even, $\Gamma_k= \Phi_{k/2}$, and so
\begin{align*}
    \left[ -i\partial_{x_2} \sigma_2+m(x_2)\sigma _3\right]\Phi_n (x_2) &= \frac{1}{\sqrt{2}}\begin{pmatrix}
     (m+\partial_{x_2})\phi_n \\ (m+\partial_{x_2})\phi_n
    \end{pmatrix}  
     = \frac{1}{\sqrt{2}}\begin{pmatrix}
        A\phi_n \\ A\phi_n
    \end{pmatrix}  
    =\lambda_n^{1/2} \Psi_n=\mu_k^{1/2} \Gamma_{k-1},
\end{align*}
whereas for $k$ odd $\Gamma_k=\Psi_{\frac{k+1}2}$, and so 
\begin{align*}
    \left[ -i\partial_{x_2} \sigma_2+m(x_2)\sigma _3\right]\Psi_n (x_2) &= \frac{1}{\sqrt{2}}\begin{pmatrix}
        A^{*}\pi _n \\ - A^{*}\pi_n
    \end{pmatrix} = \lambda_n^{1/2} \Phi_n(x_2) =\mu_k^{1/2} \Gamma_{k+1}.
\end{align*}
Similarly, we used $\sigma_1\Gamma_k=(-1)^{k+1}\Gamma_k$ in $R_{D,2}.$

\begin{rmk}\label{rmk:limiting_absorb} 
Equation \eqref{eq:unbdm_res} gives us the limiting absorption principle and allows us to define the limiting resolvent $(D-\lambda\pm i0)^{-1}$ required to apply Stone's theorem.
\end{rmk}
The following theorem states the dispersive estimates for general unbounded $m$:  
\begin{theorem}\label{thm:unbndmdisp} Suppose that $m$ satisfies the assumptions at the beginning of the section. Let $P_0$ be the projection $P_0f (x_1,x_2)\equiv f_0(x_1) \Gamma_0(x_2)$. Then for any $\gamma>  \frac32 $, we have 
\begin{equation}\label{eq:disp_unb}
\|e^{-itD}\la D\ra^{-\gamma}  (I-P_0)\|_{L^1_{x_1} L^2_{x_2} \to L^\infty_{x_1} L^2_{x_2}} \les \la t\ra^{-1/2} \, .
\end{equation}
In addition, we have the following weighted bound corresponding to the edge mode. For any $\sigma\geq 0$, $\gamma>1$, and $p\in [1,\infty]$,
\begin{equation}\label{eq:P0est}
    \|\la x_1\ra^{-\sigma}e^{-itD} \la D\ra^{-\gamma} P_0f \|_{L^\infty_{x_1 }L^p_{x_2}}\les \la t\ra^{-\sigma} \| f_0\|_{L^1_{\sigma}} \, .
\end{equation}
\end{theorem}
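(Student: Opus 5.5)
The plan is to diagonalize in the transverse eigenbasis $\{\Gamma_k\}$ of Lemma~\ref{lem:Hx2}. This reduces \eqref{eq:disp_unb} to one-dimensional Klein--Gordon type kernel bounds in $x_1$ that are \emph{uniform in the branch index}. The estimate \eqref{eq:P0est} then follows by treating the $\Gamma_0$ component as a pure transport.

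\emph{Step 1: block decomposition.} By \eqref{eq:2dRaisingTrick}, $D$ leaves each two-dimensional span $\{\Gamma_{2n-1},\Gamma_{2n}\}$, $n\geq1$, invariant, as well as $\mathrm{span}\{\Gamma_0\}$. On the $n$-th block, $D$ acts on the coefficient pair $(a_n,b_n)(x_1)$ as a constant-coefficient one-dimensional massive Dirac operator in $x_1$ with mass $M_n=\lambda_n^{1/2}$. Its symbol is a $2\times2$ matrix $\mathcal{M}_n(\xi)$ with eigenvalues $\pm\nu_n(\xi)$, where $\nu_n(\xi)=\sqrt{\xi^2+\lambda_n}$. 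Using Stone's formula with the limiting resolvent of Remark~\ref{rmk:limiting_absorb}, or simply the functional calculus for the constant-coefficient block, the kernel of $e^{-itD}\langle D\rangle^{-\gamma}$ restricted to the block is a matrix of convolution kernels
\begin{equation*}
K_n(t,x_1-y_1)=\int_\R e^{i\xi(x_1-y_1)}e^{\mp it\nu_n(\xi)}\langle\nu_n(\xi)\rangle^{-\gamma}\,a(\xi)\,\frac{d\xi}{2\pi},
\qquad a\in\Big\{1,\ \tfrac{\xi}{\nu_n(\xi)},\ \tfrac{M_n}{\nu_n(\xi)}\Big\}.
\end{equation*}
The key claim is
\begin{equation*}
\sup_{n\geq1}\ \|K_n(t,\cdot)\|_{L^\infty}\lesssim\langle t\rangle^{-1/2},\qquad \gamma>3/2.
\end{equation*}

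\emph{Step 2: summing the blocks.} Granting this claim, write $u=e^{-itD}\langle D\rangle^{-\gamma}(I-P_0)f$ with components $u_k$. Then $|u_k(x_1)|\lesssim\langle t\rangle^{-1/2}\big(\|f_{k'}\|_{L^1}+\|f_{k''}\|_{L^1}\big)$, where $k',k''$ are the indices in the same block as $k$. By Parseval in $x_2$,
\begin{equation*}
\|u(x_1,\cdot)\|_{L^2_{x_2}}\lesssim\langle t\rangle^{-1/2}\Big(\sum_k\|f_k\|_{L^1_{x_1}}^2\Big)^{1/2}\leq\langle t\rangle^{-1/2}\Big\|\Big(\sum_k|f_k|^2\Big)^{1/2}\Big\|_{L^1_{x_1}},
\end{equation*}
where the last step is Minkowski's integral inequality. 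This yields \eqref{eq:disp_unb}.

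\emph{Step 3: the uniform kernel bound (main obstacle).} The essential point is uniformity in $M=M_n\geq M_1>0$; note $\lambda_1>0$ because $0\notin\sigma(H_+)$.
\begin{itemize}
\item For $|t|\lesssim1$, bound $|K_n|\leq\int\langle\nu_n\rangle^{-\gamma}\,d\xi\lesssim1$, using only $\gamma>1$.
\item For $|t|\gtrsim1$, split with a smooth partition into $|\xi|\lesssim M$ and dyadic shells $|\xi|\sim2^j\gtrsim M$, and apply Lemma~\ref{lem:vdc} to the phase $\xi\cdot(x_1-y_1)/t\mp\nu_n(\xi)$. Its second derivative is $\lambda_n/\nu_n^3$.
\item On $|\xi|\lesssim M$ the second derivative is $\sim M^{-1}$, while the amplitude's derivative has $L^1$ norm $\lesssim M^{-\gamma}$. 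This piece contributes $t^{-1/2}M^{1/2-\gamma}$.
\item On the shell $|\xi|\sim2^j$ the second derivative is $\sim M^22^{-3j}$ and the derivative of the amplitude has $L^1$ norm $\lesssim2^{-j\gamma}$. This piece contributes $t^{-1/2}M^{-1}2^{j(3/2-\gamma)}$, which is summable over $2^j\gtrsim M$ exactly when $\gamma>3/2$, with sum $\lesssim t^{-1/2}M^{1/2-\gamma}$.
\end{itemize}
Both contributions are bounded uniformly in $M\geq M_1$. I expect the careful bookkeeping here, namely checking that the symbols $\xi/\nu_n$ and $M_n/\nu_n$ do not spoil uniformity in $n$, to be the only delicate point.

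\emph{Step 4: the edge mode.} Since $\lambda_0=0$ and $\sigma_1\Gamma_0=-\Gamma_0$, \eqref{eq:2dRaisingTrick} shows that $D$ acts on $f_0\Gamma_0$ as $i\partial_{x_1}$. Hence
\begin{equation*}
e^{-itD}\langle D\rangle^{-\gamma}P_0f=\big[(G_\gamma*f_0)(x_1+t)\big]\,\Gamma_0(x_2),
\end{equation*}
where $G_\gamma$ is the one-dimensional Bessel kernel. For $\gamma>1$ this kernel is bounded and exponentially decaying. Next use $\langle t\rangle^\sigma\lesssim\langle x_1\rangle^\sigma\langle x_1+t-y\rangle^\sigma\langle y\rangle^\sigma$ and absorb the middle factor into $G_\gamma$. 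Finally, $\Gamma_0=\phi_0(1,-1)^{\top}/\sqrt2\in L^p_{x_2}$ for all $p\in[1,\infty]$, since $\phi_0=e^{-\int_0^{x_2}m}$ decays faster than any exponential. Together these give \eqref{eq:P0est}.
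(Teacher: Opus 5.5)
Your proposal is correct and follows essentially the paper's route. You expand in the transverse eigenbasis $\Gamma_k$, prove a uniform-in-$n$ one-dimensional Klein--Gordon kernel bound of size $|t|^{-1/2}\mu_n^{1/4-\gamma/2}$, sum via orthonormality and Minkowski's inequality, and obtain the edge-mode bound from the decay of the Bessel kernel at the translated point. The only differences are cosmetic: you use the functional calculus of each $2\times 2$ massive Dirac block in the Fourier variable instead of Stone's formula with the change of variables $\lambda=\langle s\rangle\sqrt{\mu_n}$, and your dyadic van der Corput computation spells out the ``slight variant'' of Lemma~\ref{lem:KGdisp} that the paper invokes without proof.
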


\begin{rmk}
\begin{enumerate}
    \item Weights do not seem to improve the rate of decay in the edge branches in $I-P_0$. The reason is that each such branch can be mapped to a one-dimensional massive Dirac equation, which has a threshold resonance (if no potential is added); see e.g., \cite{erdougan2021one}.
    \item The weighted estimate for $P_0$, \eqref{eq:P0est}, may be understood by its real space representation: indeed, the dispersion relation in ${\rm image}(P_0)$ is simply $-k_1$, i.e., waves that travel to the left and do not disperse: $[e^{-iDt}P_0 f](x_1,x_2)= [P_0 f](x_1+t, x_2)$. Therefore, since  the profile is fixed in the traverse direction, $x_2$, and smoothing $\la D \ra^{-\gamma}$  guarantees that the data is in $L^{\infty}(\R ^2)$, the weighted decay is simply the result of the wave traveling away from the origin, thus the overall amplitude is attenuated by the weights.
\end{enumerate}
\end{rmk}
\begin{proof}
To obtain the unweighted bound, it suffices to consider the positive part of the spectrum. Writing $e^{-itD}\la D\ra^{-\gamma}(I-P_0)$ using Stone's formula, we obtain
\begin{align*}
    e^{-itD}\la D\ra^{-\gamma}(I-P_0) = \frac{1}{2\pi i}\int_{0}^{\infty}e^{-it\lambda }\la\lambda\ra^{-\gamma} (I-P_0)\left[(D-(\lambda+i0))^{-1}-(D-(\lambda-i0))^{-1}\right]d\lambda \, .
\end{align*}
Using \eqref{eq:unbdm_res} and Remark~\ref{rmk:limiting_absorb}, the contribution of $R_{D,1}$    is (note that the $n=0$ term does not appear due to the projection $I-P_0$):
$$
    \int_{\sqrt{\mu_1}}^\infty e^{-it\lambda }\la\lambda\ra^{-\gamma} \sum_{n=1}^\infty \sqrt{\mu_n}\Gamma_{n-(-1)^n}(x_2)  \chi_{  [\sqrt{\mu_n},\infty)}(\lambda)  [R_0^+(\lambda^2-\mu_n)-R_0^-(\lambda^2-\mu_n)] f_n (x_1) d\lambda \, .
$$ 
Note that the difference of limiting Schr\"odinger resolvents in the formula above vanishes for $\mu_n>\lambda^2$, and hence the $\lambda$ integral starts at $\sqrt{\mu_1}$. Using the formula for the one-dimensional free Schr\"odinger resolvent,
\begin{align*}
    R_0^{\pm}(s^2)f(x) = \frac{\pm i}{2s}\int_{\R} e^{\pm is|x-y|}f(y)dy \, ,
\end{align*}
and Fubini's theorem,\footnote{Note that each $f_n\in L^1$ and we can restrict the $\lambda$ integral to $[\sqrt{\mu_1},L]$ for arbitrary $L$, which turns the sum in $n$ into a finite sum.} we rewrite the   integral as a linear combination  of 
$$
 \sum_{n=1}^\infty  \Gamma_{n-(-1)^n}(x_2)    \int_\R   f_n (y)   \sqrt{\mu_n} \int_{\sqrt{\mu_n}}^\infty      e^{-it\lambda\pm i\sqrt{\lambda^2-\mu_n} |x_1-y|} \frac{ d\lambda}{\la\lambda\ra^{\gamma}\sqrt{\lambda^2-\mu_n}} \  dy.
$$ 
Since the $\Gamma_n$'s are $L^2_{x_2}$-orthonormal, we can bound the $ L^\infty_{x_1}L^2_{x_2}$ norm of the last expression  by 
\begin{equation}\label{eq:implicit}
\sup_{x_1}  \Big\|  \int_\R   f_n(y) C_{n,y,x_1,t}  dy  \Big\|_{\ell^2_n},
\end{equation}
  where 
  $$
   C_{n,y,x_1,t}  \equiv  \sqrt{\mu_n}  \int_{\sqrt{\mu_n}}^\infty   e^{-it\lambda\pm i\sqrt{\lambda^2-\mu_n} |x_1-y|}  \frac{ d\lambda}{\la\lambda\ra^{\gamma}\sqrt{\lambda^2-\mu_n}}.
  $$
By Minkowsky's integral inequality, we have 
$$
\eqref{eq:implicit} \les  \int_\R  \| f_n(y)\|_{\ell^2_n} \, dy ~\cdot ~ \sup_{x_1,y,n} |C_{n,y,x_1,t}|= \|f\|_{L^1_{x_1}L^2_{x_2}} \sup_{x_1,y,n} |C_{n,y,x_1,t}|  .
$$
By an analogous calculation for the contribution of $R_{D,2}$ in  \eqref{eq:unbdm_res}, the bound \eqref{eq:disp_unb}   follows from the following claim:
\begin{equation}\label{eq:CnDnbd}
    \sup_{x_1,y,n} \big( |C_{n,y,x_1,t}|  +  |D_{n,y,x_1,t}| \big) \les \la t\ra^{-1/2},
\end{equation}
where 
 $$
D_{n,y,x_1,t} \equiv \int_{\sqrt{\mu_n}}^\infty  e^{-it\lambda\pm i\sqrt{\lambda^2-\mu_n} |x_1-y|}  \Big(\mp (-1)^n \sgn(x_1-y )+\tfrac{\lambda}{\sqrt{\lambda^2-\mu_n}}  \Big)  \frac{d\lambda}{\la\lambda\ra^{\gamma} }.
 $$
 To prove the claim \eqref{eq:CnDnbd}, introduce the change of variables 
 $$\lambda=\la s\ra\sqrt{\mu_n}, \quad  d\lambda=\sqrt{\mu_n}  \frac{s}{\la s\ra}ds, \quad \sqrt{\lambda^2-\mu_n}= s\sqrt{\mu_n},$$
and rewrite 
 \begin{equation}\label{eq:Cnrewrite}
 C_{n,y,x_1,t}= \sqrt{\mu_n} \int_0^\infty e^{-it\sqrt{\mu_n} \la s\ra\pm i s \sqrt{ \mu_n} |x_1-y|}     \frac{ds}{\la s\ra \la  \la s\ra \sqrt{\mu_n} \ra^{ \gamma} }.
 \end{equation}
 For $\gamma >3/2$, a slight variant of Lemma~\ref{lem:KGdisp} below (note that $\la \la s\ra \sqrt{\mu_n}\ra$ behaves like $  \la s\ra\sqrt{\mu_n}$),  yields 
 $$
 |C_{n,y,x_1,t}|\les \mu_n^{\frac12-\frac\gamma2} \la t\sqrt{\mu_n}\ra^{-\frac12}.  
 $$
 For $|t|>1$, we can bound this by  $\mu_n^{\frac14-\frac\gamma2} | t|^{-\frac12}$, which is $\lesssim t^{-1/2}$ already when $\gamma>\frac12$.   For $|t|<1$,  by taking absolute values in \eqref{eq:Cnrewrite} and ignoring oscillations, absolute summability implies the result already when  $\gamma > 1$.
 
 By the same change of variable, we rewrite
  $$
D_{n,y,x_1,t}  = \int_0^\infty e^{-it\sqrt{\mu_n} \la s\ra\pm i s \sqrt{ \mu_n} |x_1-y|}   \Big(\mp (-1)^n \sgn(x_1-y )\frac{s}{\la s\ra}+1 \Big)     \frac{ \sqrt{\mu_n} ds}{  \la  \la s\ra \sqrt{\mu_n} \ra^{ \gamma} }.
 $$
 Once again, by Lemma~\ref{lem:KGdisp}, if $\gamma>\frac32$, we obtain 
 $$
 |D_{n,y,x_1,t}|\les \mu_n^{\frac12-\frac\gamma2} \la t\sqrt{\mu_n}\ra^{-\frac12}\les \la t\ra^{-\frac12}.
 $$
This completes the proof of \eqref{eq:CnDnbd}, and therefore of the unweighted bound.  

For the weighted bound it is more convenient to consider the full spectrum, $\R$, as there is no gap. By Stone's formula and \eqref{eq:unbdm_res}, WLOG for $\lambda>0$,
\begin{align*}
e^{-itD}\la D\ra^{-\gamma} P_0 f=  \Gamma_0(x_2)   &\int_{\R} \la\lambda\ra^{-\gamma} e^{-it\lambda}  (i \partial_{x_1}+\lambda) [R_0^+(\lambda^2)-R_0^-(\lambda^2)] f_0(x_1) d\lambda \\
&= \Gamma_0(x_2)  \int_\R \int_\R \la\lambda\ra^{-\gamma}  e^{-it\lambda\pm i\lambda |x_1-y_1|}  (\sgn(x_1-y_1)\pm 1)  f_0(y_1) d\lambda dy_1 \\
&=
\Gamma_0(x_2) \int_\R \mF(\la \cdot\ra^{-\gamma})(t\pm|x_1-y_1|)    (\sgn(x_1-y_1)\pm 1)  f_0(y_1)   dy_1. 
\end{align*}
As $\gamma>1$, it is a well-known property of the Bessel kernel that for any $\sigma\geq 0$ we have 
\begin{align*}
    | \mF(\la \cdot\ra^{-\gamma})(t\pm|x_1-y_1|)|&\les \la t\pm|x_1-y_1|\ra^{-\sigma} \les \frac{\la x_1\ra^\sigma\la y_1\ra^\sigma}{\la t\ra^\sigma} \, .
\end{align*}
This yields the claim.
\end{proof}
\begin{rmk}  The proof of Theorem~\ref{thm:unbndmdisp} yields the following stronger bound (see \eqref{eq:implicit})
 $$
 \|e^{-itD}\la D\ra^{-\gamma} (I-P_0)f\|_{  L^\infty_{x_1} L^2_{x_2}} \les \la t\ra^{-\frac12}\|f_n(y)\|_{\ell^2_nL^1_y}\, ,
 $$
 which is a weaker norm than  the $L^1_{x_1}L^2_ {x_2}=L^1_{x_1}\ell^2_n$ norm in Theorem \ref{thm:unbndmdisp}. One can further improve this by noting that 
\begin{equation}\label{eq:supCD} \sup_{x_1,y } \big( |C_{n,y,x_1,t}|  +  |D_{n,y,x_1,t}| \big) \les \mu_n^{\frac12-\frac\gamma2}\la t\sqrt{\mu_n}\ra^{-1/2}\les \mu_n^{\frac12-\frac\gamma2}\la t \ra^{-1/2}, 
\end{equation}
which, in turn, implies that  (see \eqref{eq:implicit})
 $$
 \|e^{-itD}\la D\ra^{-\gamma} (I-P_0)f\|_{  L^\infty_{x_1} L^2_{x_2}} \les \la t\ra^{-1/2}  
 \Big\|\int_\R   \mu_n^{\frac12-\frac\gamma2} |f_n(y)|   dy\Big\|_{\ell^2_n}=   \la t\ra^{-1/2}  
 \big\|  \mu_n^{\frac12-\frac\gamma2} \|f_n(y)\|_{L^1_y} \big\|_{\ell^2_n}.
 $$
Suppose that $m(x_2)\approx \pm |x_2|^\beta$ as $x_2\to\pm\infty $ for some $\beta>0$, and that consequently, the  potentials associated to $H_{\pm}$ have the asymptotic growth rate $V_{\pm}(x_2)\approx |x_2|^{2\beta}$ as $x_2\to\pm\infty$. The one-dimensional Weyl law \cite{Tischmarsh_1959} then states that the eigenvalues $\mu_n$ in this case grow asymptotically like
$$\mu_n\approx n^{\frac{2\beta}{\beta+1}}.$$
In particular, $\mu_n^{\frac12-\frac\gamma2}\in\ell^2_n$ if $\gamma>\frac32+\frac1{2\beta}$.
Whence, in this case,  the bound \eqref{eq:disp_unb} in Theorem \ref{thm:unbndmdisp} can be improved to 
\begin{equation}\label{eq:supnL1}
 \|e^{-itD}\la D\ra^{-\gamma} (I-P_0)f\|_{  L^\infty_{x_1} L^2_{x_2}} \les \la t\ra^{-1/2}  \|f_n(y)\|_{\ell^\infty_nL^1_y}. 
\end{equation}
Finally, for $|t|>1$, we can ``pull'' another factor of $\mu_n^{-1/4}$ by noting that $\langle t\mu_n^{1/2}\rangle^{-1/2}\lesssim t^{-1/2}\mu_n^{-1/4}$, and so the bound \eqref{eq:supCD} gives a total factor of $|t|^{-\frac12} \mu_n^{\frac14-\frac\gamma2}$. Note that  $ \mu_n^{\frac14-\frac\gamma2}\in\ell^2_n$ for $\gamma>1+\frac1{2\beta}$. Therefore, for $|t|>1$, \eqref{eq:supnL1} holds provided that $\gamma>\max(\frac32,1+\frac1{2\beta})$.  
\end{rmk}

\section{Klein-Gordon bounds}\label{sec:KG}
We have the following well-known bound for the free   Klein-Gordon propagator. We   provide a proof below for completeness.
\begin{lemma}\label{lem:KGdisp} Let $d=1,2$. 
The Fourier transform of $f_t(k)\equiv e^{-it\nu(k)} \nu(k)^{-\gamma }$ on $\R^d $ is bounded by $\la t\ra^{-d/2}$ provided that $\gamma >1+\frac{d}2$.  
\end{lemma}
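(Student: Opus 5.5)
We need to show that $\sup_{\xi\in\R^d}|\check f_t(\xi)| \lesssim \la t\ra^{-d/2}$, where
$$\check f_t(\xi)=\int_{\R^d} e^{i(\xi\cdot k - t\nu(k))}\nu(k)^{-\gamma}\,dk,\qquad \nu(k)=\sqrt{k^2+\minf^2}.$$

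For $|t|\le 1$ the bound is immediate when $\gamma>d$: take absolute values, since $\nu^{-\gamma}\in L^1(\R^d)$. This holds in our range because $1+d/2\ge d$ for $d=1,2$, with strict inequality $\gamma>d$. So assume $|t|\ge1$.

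Decompose dyadically in $|k|$. Let $\chi_0$ be a smooth cutoff to $|k|\lesssim 1$ and $\chi_j(k)=\chi(2^{-j}k)$ for $j\ge1$, with $\chi$ supported on $|k|\approx 1$. Write $I_j$ for the corresponding piece of the integral. For each piece we want two bounds. The first is the trivial bound $|I_j|\lesssim 2^{jd}2^{-j\gamma}$. The second is a stationary phase bound. The phase is $\phi(k)=\xi\cdot k-t\nu(k)$, whose Hessian is $-t\nabla^2\nu(k)$. The eigenvalues of $\nabla^2\nu$ are $\minf^2/\nu^3$ in the radial direction and $1/\nu$ in the angular directions ($d=2$). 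On the support of $\chi_j$ these are $\approx 2^{-3j}$ and $2^{-j}$ respectively.

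In $d=1$, Lemma~\ref{lem:vdc} applies directly with $\lambda\approx |t|2^{-3j}$. The amplitude satisfies $\|\partial_k(\chi_j\nu^{-\gamma})\|_{L^1}\lesssim 2^{-j\gamma}$, so
$$|I_j|\lesssim |t|^{-1/2}2^{3j/2}2^{-j\gamma}.$$
This is summable in $j$ exactly when $\gamma>3/2=1+d/2$, which gives $\la t\ra^{-1/2}$. The piece $j=0$ is handled the same way, since $\nu''\approx1$ there.

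In $d=2$, use polar coordinates, or equivalently a two-dimensional van der Corput / stationary phase estimate with nondegenerate Hessian. The bound is $|I_j|\lesssim |t|^{-1}(\det\text{Hessian})^{-1/2}\times$ (amplitude size plus derivatives). Here $\det \approx 2^{-4j}$, so
$$|I_j|\lesssim |t|^{-1}2^{2j}2^{-j\gamma},$$
which is summable when $\gamma>2=1+d/2$.

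A concrete route for $d=2$ that stays within the one-dimensional tool is to rotate so that $\xi=(|\xi|,0)$. Then apply Lemma~\ref{lem:vdc} in $k_1$ with $\partial_{k_1}^2\phi=-t(k_2^2+\minf^2)/\nu^3$, and again in $k_2$ with $\partial_{k_2}^2\phi=-t(k_1^2+\minf^2)/\nu^3$. On the dyadic shell, split further into sectors where $|k_2|\gtrsim|k|$ or $|k_1|\gtrsim|k|$, so that at least one of these second derivatives is bounded below.

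The main obstacle is the $d=2$ dyadic piece. There the radial curvature $2^{-3j}$ is much weaker than the angular curvature $2^{-j}$, and a single iterated van der Corput yields only $|t|^{-1}2^{3j/2}2^{j/2}$ after the $L^1$ norms of the derivatives are accounted for. The main thing to check is that the amplitude derivatives scale correctly ($\partial^\alpha(\chi_j\nu^{-\gamma})\lesssim 2^{-j(\gamma+|\alpha|)}$), so that the product of the two Van der Corput factors $(|t|2^{-3j})^{-1/2}(|t|2^{-j})^{-1/2}$ times the amplitude size $2^{-j\gamma}\cdot 2^{j}$ (the measure of the remaining variable's range absorbed by derivative $L^1$ norms) gives $|t|^{-1}2^{j(3-\gamma)}$. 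That would need $\gamma>3$. To reach $\gamma>2$, I would instead use exact stationary phase with the full Hessian determinant: away from the stationary point, integrate by parts; near it, use the Hessian $\det^{-1/2}\approx2^{2j}$. Alternatively, use the known asymptotics of the Klein–Gordon kernel via Bessel functions in polar coordinates, $\int_0^\infty e^{-it\nu(r)}\nu^{-\gamma}J_0(r|\xi|)r\,dr$, and apply one-dimensional van der Corput to the radial integral after expanding $J_0$ into $e^{\pm ir|\xi|}$ times symbols.
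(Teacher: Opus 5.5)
For $d=1$ your argument is the paper's: the trivial bound for $|t|\le1$, the dyadic pieces, and Lemma~\ref{lem:vdc} with $|\phi''|\approx|t|2^{-3j}$ and $\|\partial_k(\chi_j\nu^{-\gamma})\|_{L^1}\les2^{-j\gamma}$.

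For $d=2$, however, you do not actually give a proof, and each of your three routes falls short.
\begin{itemize}
\item The one estimate you carry out needs $\gamma>3$, as you note.
\item The ``two-dimensional van der Corput with $(\det)^{-1/2}$'' is not a lemma you can invoke. A lower bound on the Hessian determinant does not by itself control the integral uniformly. Here the curvatures $|t|2^{-3j}$ (radial) and $|t|2^{-j}$ (angular) are anisotropic, and the stationary point moves with $\xi$. On shells with $2^j\gtrsim|t|$ the rescaled radial parameter $|t|2^{-j}$ is not large, and the trivial bound there sums only to $|t|^{2-\gamma}$.
\item Your last suggestion is to expand $J_0(r\rho)=e^{ir\rho}\omega_++e^{-ir\rho}\omega_-$ and apply one-dimensional van der Corput to the radial integral. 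This does not suffice on its own. With $\phi(r)=t\la r\ra\pm r\rho$, the only gain beyond van der Corput's $(|t|2^{-3j})^{-1/2}$ is the symbol bound $|\omega_\pm(r\rho)|\les\la r\rho\ra^{-1/2}$. So the shell $r\approx2^j$ is bounded by $|t|^{-1/2}2^{j(5/2-\gamma)}\la 2^j\rho\ra^{-1/2}$. This is $\les|t|^{-1}2^{j(2-\gamma)}$ when $\rho\gtrsim|t|$. For $\rho\les1$, though, it is no better than $|t|^{-1/2}$ (already on the piece $r\les1$), and it does not even sum over $j$ for $\gamma\in(2,5/2]$.
\end{itemize}

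The missing idea is the case split in the paper's proof.
\begin{itemize}
\item When $\rho\gtrsim|t|$, the computation above works: van der Corput on each shell plus the gain $\rho^{-1/2}\les|t|^{-1/2}$ from $\omega_\pm$ gives $|t|^{-1}2^{j(2-\gamma)}$, summable for $\gamma>2$.
\item When $\rho\ll|t|$ (with $|t|\gg1$) on $r\gtrsim1$, there is no critical point: $|\phi'(r)|=|tr/\la r\ra\pm\rho|\gtrsim|t|$, so one integration by parts gives $O(|t|^{-1})$.
\item When $\rho\ll|t|$ on $r\les1$, the missing $|t|^{-1/2}$ comes from the vanishing of the measure $r\,dr$ at the origin. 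Van der Corput only sees $\|\partial_r(\cdot)\|_{L^1}$ and cannot detect this.
\end{itemize}

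Concretely, the paper handles $r\les1$ in three steps:
\begin{itemize}
\item Cut off $r\les|t|^{-1/2}$, which contributes $\int_0^{|t|^{-1/2}}r\,dr\approx|t|^{-1}$.
\item Cut off a $|t|^{-1/2}$-neighborhood of the critical point $r_0\approx\rho/|t|$. It is bounded by its length times $\sup r\min(1,(r\rho)^{-1/2})$, which is $O(|t|^{-1})$ whether $r_0\les|t|^{-1/2}$ or $r_0\gg|t|^{-1/2}$.
\item Integrate by parts twice on the remainder, using $|\phi'(r)|\approx|t||r-r_0|$, $|\phi''|\approx|t|$ and $|\phi'''|\les r|t|$.
\end{itemize}
Without this, or an equally careful uniform-in-$\xi$ stationary phase argument, your proposal proves the lemma only for $d=1$.
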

\begin{proof} Let $\chi_0$ be a smooth cut-off for $[-1,1]$ and $\chi $ for $[1/2,2]$ so that $\chi_0(k)+\sum_{j=1}^\infty\chi_j(k) \equiv 1 $ on 
$\R$, where $\chi_j(k)\equiv\chi(2^{-j}|k|)$, $j=1,2,3,..$ .

When $d=1$, we have 
$$
\widehat{f_t}(\eta)= \int_\R e^{-it\la k\ra}e^{-ik\eta} \la k\ra^{-\gamma} \,  dk \, .   
$$
First note that the integrand is in $L^1$ as $\gamma>3/2$. Hence we have the uniform bound $|\widehat{f_t}(\eta)|\les 1$. 
It remains to consider the case $|t|\gg 1$. We write
$$
\widehat{f_t}(\eta)= \sum_{j=0}^\infty \int_\R e^{-it\la k\ra}e^{-ik\eta} \chi_j(k) \la k\ra^{-\gamma} dk \, .  
$$
Let $\phi (k)\equiv-t\la k\ra-k\eta$. Note that $|\phi^{\prime\prime}(k)|=|t|\la k\ra^{-3} \approx |t|2^{-3j}$ on the support of $\chi_j$. Therefore, by Van der Corput's lemma, the $j$-th integral is bounded by 
$$
|t|^{-1/2}2^{3j/2} \|\partial_k [\chi_j(k) \la k\ra^{-\gamma}]\|_{L^1(\R)} \les |t|^{-1/2} 2^{(\frac32-\gamma)j} \, .
$$   
Summing over $j$ yields the claim as $\gamma>3/2$. 

When $d=2$, since $f_t$ is a radial function we have (ignoring constants)  
$$
\widehat{f_t}(\eta)= \int_0^\infty e^{-i t\la r\ra } J_0(r \rho) \frac{r}{\la r\ra^\gamma}  \, ,dr \, ,  
$$
where $\rho\equiv|\eta|$, and $J_0$ is the $0$-th order Bessel function, which admits the well-known representation (obtained by the asymptotic expansion of $J_0$, see e.g., \cite[Section 2]{erdougan2018dispersive}),
$$   J_0(r\rho) = e^{ir\rho}\omega_{+}(r\rho) + e^{-ir\rho}\omega_{-}(r\rho) \, .
$$ 
It thus suffices to consider 
$$
 I\equiv \int_0^\infty e^{i \phi(r)}  \omega_{\pm} (r \rho) \frac{r}{\la r\ra^\gamma} \,  dr \, , 
$$
where as before $\phi(r)\equiv  t \sqrt{1+r^2}\pm  r\rho$ and, again from the standard asymptotics of the Bessel function, 
\begin{equation}\label{eq:omegabounds}|\partial_r^N\omega_{\pm}(r\rho)|\les \frac1{r^{N}\la r\rho\ra^{1/2}}, \qquad N=0,1,2, \cdots  \, .
\end{equation}
Once again, the integrand is in $L^1$ (since $\gamma>2$). Therefore, it suffices to consider the case $|t|\gg 1$.
  
First consider the case $\rho\gtrsim |t|$. We write
$$I=\sum_{j=0}^\infty I_j\equiv \sum_{j=0}^\infty  \int_0^\infty e^{i \phi(r)} \chi_j(r) \omega_{\pm} (r \rho) \frac{r}{\la r\ra^\gamma} \, dr  \, .$$
Note that, on the support of $\chi_j$, $j\geq 0$, we have $|\phi^{\prime\prime}(r)|=|t|\la r\ra^{-3}\approx |t| 2^{-3j}$. Therefore, by Van der Corput's lemma, we have 
$$
|I_j|\les |t|^{-1/2}2^{3j/2} \|\partial_r [\chi_j(r) r \omega_{\pm} (r \rho) \la r\ra^{-\gamma}]\|_{L^1(\R_r)} \les |t|^{-1/2} \rho^{-1/2} 2^{(2-\gamma)j} \les |t|^{-1}2^{(2-\gamma)j}.
$$   
Here, we used  the bound $\rho\gtrsim|t| $ and  \eqref{eq:omegabounds}  on  $\omega_{\pm}$. Summing over $j$ yields the claim as $\gamma>2$. 

Now, consider the case $\rho\ll |t|$ and $1\ll |t|$.  Let  $\chi_0$ and $\chi_j$ be as above, and let  $\widetilde\chi=1-\chi_0$. On the support of $\widetilde\chi $,  we have  
\begin{equation}\label{eq:phasebounds}
| \phi^\prime(r)|=\big|\tfrac{tr}{ \sqrt{1+r^2}}\pm \rho\big| \gtrsim |t| \, , \qquad  |\phi^{\prime\prime}(r)| \sim |t| \la r\ra^{-3} \, .
\end{equation}
Writing $e^{i\phi}=\frac{1}{i\phi^\prime}\partial_r e^{i\phi}$ to integrate by parts,  we bound the contribution of $\widetilde\chi$ to $I$   by
$$
\Big|\int_0^\infty e^{i \phi(r)} \widetilde \chi(r) \omega_{\pm} (r \rho) \frac{r}{\la r\ra^\gamma} \,dr \Big| \les   \int_0^\infty \Big| \partial_r \Big(\frac{1}{\phi^\prime(r)} \omega (r \rho) \widetilde\chi(r) \frac{ r}{\la r\ra^{\gamma }}\Big)\Big| dr.
$$
 Using the bounds  \eqref{eq:omegabounds} and \eqref{eq:phasebounds} on $\omega$  and $\phi$, respectively, we have the  bound 
 $$
 \les \int_0^\infty \Big(\frac1{\la r\ra^\gamma |t|}+\frac{ r }{\la r\ra^{3+\gamma}|t| }\Big) dr\les \frac1{|t|} \, ,  $$  
 for which even $\gamma>1$ suffices.
To complete the proof of a decay bound $|t|^{-1}$ when $\rho \ll |t|, $ $1\ll |t| $, it remains to consider the contribution of $\chi_0$, i.e.,  to bound the integral
$$ I_0 \equiv \int_0^\infty e^{i \phi(r)} \chi_0(r) \omega_{\pm} (r \rho) \frac{r}{\la r\ra^\gamma} \, dr  \, . $$
By inserting $1=\chi_0 (r|t|^{1/2})+\widetilde{\chi}(r|t|^{1/2})$, we have 
 \begin{equation} \label{eq:Idecomp}
 |I_0|\les  \int_0^\infty  \chi_0(r|t|^{1/2}) r \,dr + \Big|\int_0^\infty   e^{i \phi(r)} \chi_0(r) \widetilde\chi(r|t|^{1/2}) \omega_{\pm} (r \rho) \frac{ r}{\la r\ra^{\gamma }} \, dr\Big| \, .
 \end{equation}
 The first integral $\sim \int_0^{t^{-1/2}} r \, dr \sim |t|^{-1}$.  

For the second term in the bound for $I_0$, let $r_0$ be the critical point of $\phi$ on the support of $\chi_0$: $\frac{r_0}{\la r_0\ra  } = \rho/|t|$. Note that $r_0\sim \rho/|t|$. We further divide the integral into two pieces, near that critical point and away from it:
\begin{multline*}
\int_0^\infty e^{i \phi(r)}  \widetilde\chi(rt^{1/2}) \chi_0(r) \chi_0((r-r_0)|t|^{1/2})\omega_{\pm} (r \rho) \frac{r}{\la r\ra^{\gamma }} dr \\+\int_0^\infty e^{i \phi(r)}  \widetilde\chi(r|t|^{1/2}) \chi_0(r) \widetilde\chi((r-r_0)t^{1/2})\omega_{\pm} (r \rho) \frac{r}{\la r\ra^{\gamma }} dr.
\end{multline*}
The first integral is bounded by
$|t|^{-1/2}\min(r_0+|t|^{-1/2},\tfrac{(r_0+|t|^{-1/2})^{1/2}}{\rho^{1/2}})$: this is because we integrate on an interval of length $\les |t|^{-1/2}$, and bound the integrand by first using $|\omega_{\pm}(r\rho)|\les \min(1, (r\rho)^{-1/2})$ from \eqref{eq:omegabounds}. That upper bound is in turn
 bounded by $|t|^{-1}$ if $r_0\les |t|^{-1/2}$. If $ r_0\gg |t|^{-1/2}$, we have  $|t|^{-1/2} \tfrac{(r_0+|t|^{-1/2})^{1/2}}{\rho^{1/2}} \sim |t|^{-1/2} \tfrac{(\rho/|t|)^{1/2}}{\rho^{1/2}}\sim  |t|^{-1}$ as well.   Integrating by parts twice, we bound the second integral, yielding 
\begin{equation}\label{eq:awayfromr0_kg2d}
\int_0^\infty \Big|\partial_r \Big(\frac{1}{\phi^\prime(r)} \partial_r \Big(\frac{1}{\phi^\prime(r)}\widetilde\chi(r|t|^{1/2}) \chi_0(r) \widetilde\chi((r-r_0)|t|^{1/2}) \omega _{\pm}(r \rho) \frac{ r}{\la r\ra^{\gamma }}\Big)\Big)\Big| dr\, .
\end{equation}
Note that, as $r\to r_0$,
$$
|\phi^\prime(r)|=|\phi^\prime(r)-\phi^\prime(r_0)| \sim |r-r_0| |t| \, , \qquad  |\phi^{\prime\prime}(r)|\sim |t|\, , \qquad |\phi^{(3)}(r)|\sim r|t| \, .
$$
Write  $\widetilde{\chi}_t (r) \equiv \widetilde\chi_1(r|t|^{1/2}) \widetilde\chi_1((r-r_0)|t|^{1/2}) $, where $\widetilde{\chi}_1$ has a slightly larger support so that the support of the derivatives of $\widetilde{\chi}$ are included. Then, the integral in \eqref{eq:awayfromr0_kg2d} is bounded by 
\begin{align*}\int_0^1 \frac{1 }{ r|r-r_0|^2|t|^2}\widetilde{\chi}_t (r) dr &+\int_0^1 \frac{1 }{  |r-r_0|^3|t|^2} \widetilde{\chi}_t (r) \, dr \\
&+ \int_0^1 \frac{r  }{  |r-r_0|^4|t|^2\la r\rho\ra^{1/2}} 
\widetilde{\chi}_t (r) dr \, .
\end{align*}
 The first integral is $O(|t|^{-1})$ by first using the bound $r^{-1}\les |t|^{\frac12}$ on the support of $\widetilde\chi_t$, and then integrating on the set $|r-r_0|\gtrsim |t|^{-\frac12}$. The second integral is bounded by $|t|^{-1}$ by direct integration on the set $|r-r_0|\gtrsim |t|^{-\frac12}$.  

 For the third integral, we consider the cases $r\sim r_0$ and $r\les |r-r_0|$ separately on the support of $\widetilde\chi((r-r_0)|t|^{1/2})$.   When $r\sim r_0$ we have that, since $r_0\sim \rho/|t|$ and $|r-r_0|\gtrsim|t|^{-1/2}$  
$$  \frac{r  }{  |r-r_0|^4 \la r\rho\ra^{1/2}} \les \frac{r_0}{ |r-r_0|^4   r_0^{1/2}\rho^{1/2}} \les  \frac{|t|^{-1/2}}{|r-r_0|^4 } \les  \frac{1}{|r-r_0|^3 }. $$
Similarly,  when $r\les |r-r_0|$, we simply use the fact that $\langle \rho r\rangle ^{-1/2}\lesssim 1$ to get  
$$
 \frac{r  }{  |r-r_0|^4 \la r\rho\ra^{1/2}}\les \frac{1}{  |r-r_0|^3  } \, .
$$
Therefore, in both cases the integral is bounded by the second integral, which contributes  $O(|t|^{-1})$. 
\end{proof}

\appendix

\bibliographystyle{abbrv}
\bibliography{mybib}

\end{document}